\pgfplotsset{compat=newest}
\let\b=\boldsymbol
\newtheorem{theorem}{Theorem}
\newtheorem{definition}[theorem]{Definition}
\newtheorem{lemma}[theorem]{Lemma}
\newtheorem{property}[theorem]{Property}
\newtheorem{remark}[theorem]{Remark}
\def\bigO#1{\mathrm{O}\!\left(#1\right)}
\def\norm#1{\left\|#1\right\|}
\def\D#1{\partial_{\b #1}}
\def\DD#1{\mathcal{D}^{#1}}
\def\greenf{\mathcal G}
\def\bnu{\boldsymbol{\nu}}
\def\dlmfref#1{\citep[(\href{https://dlmf.nist.gov/#1}{#1})]{NIST:DLMF}}
\def\betal{\b \theta}
\def\betam{\b \beta}
\def\betamp{\betam'}
\def\alphal{\b \gamma}
\def\alphalp{\alphal'}
\def\alpham{\b \alpha}
\def\alphamp{\alpham'}
\let\c@table\c@figure
\title{Automatic Synthesis of Low-Complexity Translation~Operators for the Fast~Multipole~Method}
\author{Isuru Fernando and Andreas Kl{\"o}ckner}
\begin{document}

\maketitle

\begin{abstract}
We demonstrate a new, hybrid symbolic-numerical method for the
automatic synthesis of all families of translation operators required
for the execution of the Fast Multipole Method (FMM). Our method is
applicable in any dimensionality and to any translation-invariant
kernel. The Fast Multipole Method, of course, is the leading approach
for attaining linear complexity in the evaluation of long-range
(e.g. Coulomb) many-body interactions.
Low complexity in translation operators for the Fast Multipole Method (FMM)
is usually achieved by algorithms specialized for a potential obeying a specific
partial differential equation (PDE). Absent a PDE or specialized
algorithms, Taylor series based FMMs or kernel-independent FMM have
been used, at asymptotically higher expense.

When symbolically provided with a constant-coefficient elliptic
PDE obeyed by the potential, our algorithm can automatically
synthesize translation operators requiring $\bigO{p^d}$ operations,
where $p$ is the expansion order and $d$ is dimension, compared with
$\bigO{p^{2d}}$ operations in a naive approach carried out on (Cartesian) Taylor expansions.  This is achieved by using a
compression scheme that asymptotically reduces the number of terms in the Taylor expansion
and then operating directly on this ``compressed'' representation.
Judicious exploitation of shared subexpressions permits
formation, translation, and evaluation of local and multipole
expansions to be performed in $\bigO{p^{d}}$ operations, while an FFT-based
scheme permits multipole-to-local translations in $\bigO{p^{d-1}\log(p)}$ operations.
We demonstrate computational scaling of code generation and
evaluation as well as numerical accuracy through numerical
experiments on a number of potentials from classical physics.

\end{abstract}

\section{Introduction}

The Fast Multipole Method (FMM)~\citep{greengard1986fast} is an algorithm to attain linear complexity (in the
number of ``source'' and ``target'' or  ``observation'' points) in the
evaluation of a many-body potential
\begin{equation}
  \phi(\b x) = \sum_{\b y \in S}G(\b x- \b y) w(\b y), \qquad (\b x\in T),
  \label{eq:particle-convolution}
\end{equation}
where $S\subset \mathbb R^d$ is a set of source particles and
$T\subset \mathbb R^d$ a set of target particles, $G: \mathbb R^d \to \mathbb C^d$ is a
kernel, and $w$ is a density function. As a computational primitive,
the FMM has proven crucial in molecular dynamics, particle-based
gravitational simulation as well as as a building block of solvers for
boundary value problems of elliptic partial differential equations
based on boundary and volume integral equations, cf.~\citep{chew_integral_2008}
for example.

The FMM proceeds by expanding the far-field of the potential in
``local'' and ``multipole'' expansions that are nested recursively along a quad-
or octree in a box geometry. To ensure accuracy of the expansion
(resulting from the smoothness of the far-field),
all expansions are only used away from their source geometries,
where multipoles attain accuracy away from their expansion center, while
local expansions (much like conventional Taylor expansions) attain
accuracy close to their center. Common types of expansions for
multipoles and locals include spherical harmonic series,
complex Laurent series, or Taylor series.

\citet{greengard1986fast}~introduced a (non-adaptive) FMM algorithm for the Laplace equation in
two dimensions, using a complex-valued Laurent series for expressing the Coulomb potential
$\log(R)$ where $R$ is the distance between the target and the source point.
The expansions for this scheme have $\bigO{p}$ terms and translations cost
$\bigO{p^2}$ operations where $p$ is the order of the expansion.
\citet{greengard1988rapid} extended the FMM to three-dimensional Laplace equation
using a spherical harmonic series which has $\bigO{p^2}$ terms in the expansion
and allows translations that cost $\bigO{p^4}$ operations.
\citet{greengard1988efficient} improved on the FMM by reducing the cost of translations
to $\bigO{p \log(p)}$ for two dimensions and to $\bigO{p^2 \log(p)}$
for three dimensions using a Fast Fourier Transform (FFT).
\citet{greengard2002new} introduced an $\bigO{p^3}$ algorithm for the three-dimensional
Yukawa equation based on spherical harmonic expansions by converting
to plane wave expansions before translations to achieve a better time complexity.

To remedy the kernel-specific nature of prior developments,
\citet{ying2004kernel} introduced a kernel-independent FMM algorithm. The kernel-independent FMM
works for a broad class of potentials without needing kernel-specific
translation operators to be supplied, unlike
prior approaches based on spherical harmonics or plane-wave-based expansions.
The algorithm likewise uses a Fast Fourier
Transform, but uses asymptotically more terms in each expansion than the algorithm in
\citep{greengard1988efficient} (by a factor of $\bigO{p}$), and
therefore is a $\bigO{p^3 \log(p)}$ algorithm in three
dimensions.

FMM variants based on Taylor series have been developed in two
and three dimensions. \citet{zhao1987an} gives an algorithm for
the three-dimensional Laplace equation, where each expansion
has $\bigO{p^3}$ terms and a translation costs $\bigO{p^6}$.
The cost of the translations for Taylor series-based expansions is an obstacle
for widespread usage. \citet{oppelstrup2013matrix} uses common
subexpression elimination (via a computer algebra system) to reduce
the cost from $\bigO{p^6}$ to an (empirically obtained) $\bigO{p^{4.5}}$.
\citet{shanker2007accelerated} give a Taylor series-based algorithm
with a multipole-to-local translations cost of $\bigO{p^4}$
for kernels of the form $R^{-\nu}$ ($\nu \in \mathbb R$).
This includes the three-dimensional Laplace problem at $\nu = 1$.
This algorithm re-writes translation operations as tensor
computations and uses tensor contraction algorithms.
\citet{coles2020optimizing} extend \citeauthor{shanker2007accelerated}'s work
coupled with common subexpression elimination to reduce the cost of
multipole-to-local translation to $\bigO{p^{3.6}}$ empirically.

One advantage of methods based on Taylor series is that they can be easily
generalized to arbitrary kernels. In order to do that, partial
derivatives of the kernel have to be obtained, which can be
both numerically subtle and computationally inconvenient.
\citet{tausch2003fast} proposes
an algorithm to calculate the derivatives using a recurrence formula for
all radially symmetric kernels given that all radial derivatives are
known and gives formulae for the radial derivatives for (free-space) Green's functions
of the Laplace equation, the Helmholtz equation, as well as the equations of
linear elasticity and Stokes flow.
\citet{zhang2011fast} use a differential algebra framework
for calculating the derivatives. \citet{coles2020optimizing}
use a symbolic algebra framework to calculate the derivatives.
While each of these latter methods is aimed at a generalization of the FMM,
their approach to expansion and translation does not yield low
complexity competitive with the manually-derived approaches described
above. Specifically, they use expansions with $\bigO{p^d}$
coefficients, and some of the translations require $\bigO{p^{2d}}$ work.

In this paper, we present, to the best of our knowledge, the first
comprehensive black-box toolchain for the synthesis of Fast Multipole Methods
for general kernels that attains complexities competitive with
manually-derived approaches, from limited amounts of user-provided
information. Our method provably attains the same order of truncation
error as other state-of-the-art methods in the literature. The
complexity of our approach decreases as more information is made
available by the user.
\begin{itemize}
  \item
    With only a symbolic expression of the
    kernel, our method requires $\bigO{p^d}$ terms in the expansion.
    It attains $\bigO{p^d \log p}$ cost of a multipole-to-local translation
    (with $\bigO{p^{d+1}}$ offline precomputation cost),
    and no higher than $\bigO{p^{d+1}}$ cost for other
    translations, assuming a derivative of any given order can be
    computed at amortized $\bigO{p}$ cost.
  \item
    Given a symbolic form of an elliptic constant-coefficient
    partial differential equation satisfied by the potential,
    our method requires $\bigO{p^{d-1}}$ terms
    in the expansion. It attains $\bigO{p^{d-1}\log p}$ cost of a
    multipole-to-local translation
    (with $\bigO{p^{d}}$ offline precomputation cost),
    and no higher than $\bigO{p^{d}}$ cost for other
    translations, again assuming a derivative of any given order can be
    computed at amortized $\bigO{p}$ cost.
  \item
    Given the PDE \emph{and} an evaluation formula for the derivatives of
    the kernel that yields $\bigO{p^{d-1}}$ specific derivatives
    at $\bigO{p^{d-1}}$ cost, our method requires $\bigO{p^{d-1}}$ terms
    in the expansion. It attains $\bigO{p^{d-1}\log p}$ cost of a
    multipole-to-local translation
    (with $\bigO{p^{d-1}}$ offline precomputation cost),
    and no higher than $\bigO{p^{d}}$ cost for other
    translations.
\end{itemize}
The final form of our method is competitive with methods having
state-of-the art complexity (e.g.~\citep{greengard1988rapid}) outside of
multipole-to-multipole and local-to-local translations requiring
$\bigO{p^d}$ operations. Notably, neither of the two represent a
dominant cost in typical applications of the FMM. In the interest of
brevity, we use the term `translation' so as to include initial
formation and final evaluation of multipole and local expansions in
the above description.

We show that there is no additional error incurred by our
approach as compared to conventional Cartesian Taylor expansions,
\emph{except} for multipole-to-multipole translations for specific
PDEs. In this case, we show that the error obeys a bound that is
asymptotically identical to the truncation error of the underlying
expansion, under mild assumptions. We additionally provide numerical
verification supporting this claim.
Finally, we will apply the algorithm to solve a boundary value problem
for the biharmonic equation and verify the accuracy.

\section{Preliminaries}

In this section, we introduce the fundamental objects under
consideration, namely multipole and local expansions based
on Taylor series as well as their respective translation
operators. These serve as the point of departure for our
compressed and cost-optimized methods.

Let $\b x, \b y \in \mathbb R^d$ be two points. We will refer to these
as the \emph{target point} and the \emph{source point}, respectively.
Let
\[
  \mathcal M(p):=\left\{\b q=(q_1,\dots,q_d) \in \mathbb N_0^d: \sum_{i=1}^d q_i
    \le p\right \}
\]
be the set of all $d$-dimensional multi-indices with total order
less than or equal to $p$, and let $N(p):=|\mathcal M(p)|$.
Observe that
\begin{equation}
  N(p)=\binom p d.
  \label{eq:np-count}
\end{equation}

Let $p\in\mathbb N_0$ be an expansion order.
Let $G\in C^{2p}(\mathbb R^d\setminus\{\b 0\})$ be
a (translation-invariant) kernel as in
\eqref{eq:particle-convolution}.
Note that we do not yet impose the restriction that $G$ satisfy a PDE
away from the origin.
We assume that the derivatives of $G$ of order greater than $p$
satisfy
\begin{equation}
  \D x^{\b q} G(\b x) \le M \frac{1}{\norm{\b x}^{|\b q| + p'}}
    \qquad (\norm{\b x}\le a),
  \label{eq:assumption_g}
\end{equation}
for some constants $a, M, p' \in \mathbb{R}^+$
where $\b q \in \mathbb N_0^d, |\b q| \ge p + 1$ and $p'\ge -p$.
We let
\[
  \greenf(\b x, \b y) = G(\b x - \b y)
\]
for now to more clearly separate the dependencies on source
and target variables.
Using a Taylor series for (local) expansion of $\greenf$ around a
center $\b c$ from sources $\b y$ outside an expansion radius
$R$ ($\norm{\b y - \b c} > R$),
\begin{equation}
\label{eq:local}
  \greenf(\b x, \b y) = \sum_{\b q \in \mathcal{M}(\infty)
    } \underbrace{\frac{\D x^q \greenf(\b x, \b y)
    |_{\b x = \b c} }{\b q!}}_{\text{depends on source/center}}
    \underbrace{(\b x - \b c)^{\b q}}_{\text{depends on center/target}}.
\end{equation}
Truncating the series to order $p$ we have for $R > a$,
\begin{equation}
\label{eq:local_truncated}
  \left|\greenf(\b x, \b y) - \sum_{\b q \in \mathcal{M}(p)
    } \frac{\D x^q \greenf(\b x, \b y)
    |_{\b x = \b c} }{\b q!}
    (\b x - \b c)^{\b q}\right|
    \le \frac{A \norm{\b x-\b c}^{p+1}}{R^{p + p'}(R - \norm{\b x - \b c})},
\end{equation}
where $A$ is a constant.
\citet{shanker2007accelerated} provide a proof of this error bound
for the local expansion.
Given a user-specified error tolerance $\epsilon$, we choose an order
such that the remainder term in the above expansion is smaller than the
tolerance whenever $\norm{\b x - \b c}<R$.
In reference to a local expansion,
we consider $(\D x^{\b q} G/\b q!)_{\b q\in\mathcal M(p)}$ the
\emph{coefficients} of the expansion, and the multivariate monomials
$((\b x-\b c)^{\b q})_{\b q\in\mathcal M(p)}$ the \emph{expansion basis}.
The factorial term in the expansion can be considered part of the
coefficient or the expansion basis. We choose to consider it
part of the coefficient.
Above, we have used the multi-index notation where for $\b q = (q_1, q_2, \ldots, q_d)$ and
$\b x = (x_1, x_2, \ldots, x_d)^T$, we have
\begin{align*}
  \b q! &:= q_1! q_2! \ldots q_d !, \\
  \b x^{\b q} &:= x_1^{q_1} x_1^{q_2} \ldots x_d ^ {q_d}, \\
  \D x^{\b q} &:= \frac{\partial^{q_1}}{ \partial x_1^{q_1}}\frac{\partial^{q_2}}{ \partial x_2^{q_2}}
                 \ldots \frac{\partial^{q_d}}{ \partial x_d^{q_d}}, \\
  \b x \ge \b q &:\Leftrightarrow x_1 \ge q_1 \land x_2 \ge q_2\land \cdots\land x_d \ge q_d, \\
  |\b q| &:= q_1 + q_2 + \ldots q_d, \\
  \binom{\b x}{\b q} &:= \binom{x_1}{q_1} \binom{x_2}{q_2} \ldots
    \binom{x_d}{q_d}.
\end{align*}
Likewise, for a multipole expansion of $\greenf$ around a
center $\b c$ from sources $\b y$ inside an expansion ball with radius
$R$ ($\norm{\b y - \b c} < R$), we have
\begin{equation}
\label{eq:mpole}
  \greenf(\b x, \b y) = \sum_{\b q \in \mathcal{M}(\infty)
    } \underbrace{\D y^{\b q} \greenf(\b x, \b y) |_{\b y = \b c}
    }_{\text{depends on center/target}}
    \underbrace{\frac{(\b c - \b y)^{\b q}}{\b q!}}_{\text{depends on source/center}}.
\end{equation}
Truncating the series to order $p$ we have for $\norm{\b x - \b c} < a$,
\begin{equation}
  \label{eq:mpole_truncated}
  \left|\greenf(\b x, \b y) - \sum_{\b q \in \mathcal{M}(p)
    } \D y^{\b q} \greenf(\b x, \b y) |_{\b y = \b c}
    \frac{(\b c - \b y)^{\b q}}{\b q!}\right|
    \le \frac{A' R^{p+1}}{\norm{\b x - \b c}^{p + p'} (\norm{\b x - \b c} - R)},
\end{equation} where $A'$ is a constant.
\citet{shanker2007accelerated} also provide a proof of this error bound
for the multipole expansion.
Given a user-specified error tolerance $\epsilon$, we once again choose an order $p$ such that the
remainder term in the above expansion is smaller than the tolerance whenever
$\norm{\b x-\b c}>R$.
In reference to a multipole expansion, we consider
the multivariate monomials $((\b y-\b c)^{\b q}/\b q!)_{\b q\in\mathcal M(p)}$ the
\emph{coefficients} of the expansions, and the derivatives
$(\D x^q G)_{\b q\in\mathcal M(p)}$ the \emph{expansion basis}.

In both cases, the $d$-dimensional Taylor series are truncated at order $p$,
and therefore there are $\bigO{p^d}$ terms in the sum.

To obtain a \emph{multipole-to-local translation operator}
given the coefficients for a multipole expansion
$\alpham_{\b q} = (\b c_1 - \b y)^{\b q}/\b q!$ at a
given center $\b c_1$, we can calculate the coefficients of a local expansion
at a different center by formally inserting multipole expansion
\eqref{eq:mpole} into the local expansion \eqref{eq:local}
and matching terms, i.e.
\[
  \D x^{\b r} \greenf(\b x, \b y) |_{\b x = \b c_2} \approx
    \sum_{|\b q| \leqslant p} \D x^{\b r}
    ( \D y^{\b q} \greenf(\b x, \b y) |_{\b y = \b c_1}
    )|_{\b x = \b c_2}
    \alpham_{\b q}.
\]
Since there are $\bigO{p^d}$ terms in the expansion around given center, and there are
the same number of coefficients in the translated expansion, this multipole-to-local
translation operator requires $\bigO{p^{2d}}$ work when carried out naively.

To obtain a \emph{multipole-to-multipole translation operator}
given the coefficients for a multipole expansion
$\alpham_{\b q} = (\b c_1 - \b y)^{\b q}/\b q!$ at a
given center $\b c_1$, we can calculate the coefficients of a multipole expansion
at a different center using the multi-index binomial theorem.
\[
  \frac{(\b c_2- \b y)^{\b r}}{\b r!}
    = \frac{1}{\b r!}\sum_{\b q \leqslant \b r}
      \binom{\b r}{\b q}
      \underbrace{\b q! \alpham_{\b q} }_{(\b c_1 - \b y)^{\b q}}
      (\b c_2 - \b c_1)^{\b r-\b q} \\
    = \sum_{\b q \leqslant \b r} \alpham_{\b q} \frac{(\b c_2 - \b c_1)^{\b r-\b q}}{(\b r - \b q)!}.
\]
This multipole-to-multipole translation operator also requires $\bigO{p^{2d}}$
work when carried out naively.

To obtain a \emph{local-to-local translation operator} given the coefficients
for a local expansion $\alphal_{\b q} = \D y^{\b q} \greenf(\b x, \b y) |_{\b x = \b c_1}/\b q!$ at a
given center $\b c_1$, we can calculate
the coefficients of a local expansion at a different center $\b c_2$ by differentiation,
i.e.
\[
  \D x^{\b r} \greenf(\b x, \b y) |_{\b x = \b c_2}  \approx \sum_{| \b q | \leqslant p
    } \alphal_{\b q}
    \D x^{\b r} (\b x - \b c_1)^{\b q} |_{\b x = \b c_2}.
\]
This local-to-local operator also requires $\bigO{p^{2d}}$ work
when carried out naively.

Formation of a multipole expansion following \eqref{eq:mpole}
and evaluation of a local expansion following \eqref{eq:local} both
require only $\bigO{p^d}$ work.
If $\D x^{\b q} u$ for $\b q\in \mathcal M(p)$ is available in amortized $\bigO{p}$
time, evaluation of a multipole expansion and formation of a local
expansion (both requiring these values) require $\bigO{p^{d+1}}$ work.
If, however, cheaper derivatives that cost amortized $\bigO{1}$ time
are available,
then the evaluation of a multipole expansion and formation of a local
expansion also require only $\bigO{p^d}$ work.

The storage and operation count required for
expansion and evaluation in the Taylor setting is higher by a factor
of $\bigO{p}$ than, e.g. conventional translation operators based on 
spherical harmonics.
Translation operators in Taylor form
require more work by a factor of at least $\bigO{p^2}$. This
difference results from the use of the PDE constraint (or lack
thereof).

\section{Algebraic Compression of Multipole and Local Expansions}

When the potential satisfies a constant-coefficient linear partial differential equation,
the derivatives of the potential satisfy the same PDE.
Exploiting the resulting linear relationships among the derivatives
$(\D x^{\b q} u)_{\b q\in\mathcal M(p)}$ can reduce the amount
of storage and computation needed for formation, translation, and
evaluation of multipole and local expansions.

In this section, we derive an automated symbolic procedure to compress
local and multipole expansions based on Taylor series with the help of
an assumption that the potential satisfy a linear,
constant-coefficient PDE. The resulting expansions have $\bigO{p^{d-1}}$
terms (compared to $\bigO{p^d}$ terms in \eqref{eq:local} and \eqref{eq:mpole}).
We begin by encoding the relationships between derivatives using the
language of linear algebra in Section~\ref{sec:la_deriv}.

\subsection{Linear-Algebraic Relationships Among Derivatives}
\label{sec:la_deriv}

Let $\mathcal{L}$ be a
constant-coefficient, linear, scalar
$d$-dimensional partial differential operator
\begin{equation}
  \label{eq:define_pde_op}
  \mathcal{L} = \sum_{\b m \in \mathcal{M}(c)} a_{\b m} \frac{\partial^{\b m}}{\partial \b x^{\b m}},
\end{equation}
where $a_{\b m} \in \mathbb{C}$ are coefficients and $c$ is the order of the
PDE. Let $G:\mathbb R^d \setminus\{\b 0\} \to \mathbb C $ satisfy $\mathcal L G(\b x)=0$
for $x\in\mathbb R^d \setminus\{\b 0\} $.
We define `a graded mononomial ordering' for multi-indices in Definition~\ref{def:bnu} using
the definition in \citep{cox1994ideals}.
\begin{definition}
  A \emph{graded monomial ordering} $\bnu$ is an invertible mapping
  $\bnu:\mathbb N \to \mathbb N_0^d$ that enumerates the
  $d$-multi-indices in such a way that
  \begin{enumerate}
    \item $\bnu(1) = (0,\dots,0)\in\mathbb N_0^d$,
    \item for $i, j \in \mathbb N$ we have $|\bnu(i)| < |\bnu(j)| \implies i < j$,
      and
    \item for $i, j,k \in \mathbb N$ we have
      \begin{equation}
        i < j \implies
        \bnu^{-1}(\bnu(i) + \bnu(k)) < \bnu^{-1}(\bnu(j) + \bnu(k)).
        \label{itm:numbering-ineq}
      \end{equation}
  \end{enumerate}
  \label{def:bnu}
\end{definition}
\begin{remark}
  Definition~\ref{def:bnu} implies that each graded monomial ordering
  induces a numbering of the elements of $\mathcal M(p)$, i.e.
  $\bnu|_{\{1,\dots, N(p)\}} = \mathcal M(p)$.
\end{remark}
\begin{remark}
  \eqref{itm:numbering-ineq} is equivalent to
  \begin{equation}
    i < j \Leftrightarrow
    \bnu^{-1}(\bnu(i) + \bnu(k)) < \bnu^{-1}(\bnu(j) + \bnu(k)).
    \label{eq:numbering-ineq-iff}
  \end{equation}
\end{remark}

For the remainder of this article, $\bnu$ will always be a graded monomial ordering.
Let $\DD q $ be a vector of derivative operators with order less than
or equal to $q$:
\begin{equation}
\label{eq:define_v}
\DD q =[\D x^{\bnu(i)}]_{i=1}^{N(q)}.
\end{equation}
$\mathcal L G=0$ will result in linear relationships between elements of $\DD p G$.
In the following, we will construct a \emph{decompression
matrix} $M\in \mathbb C^{N(p) \times N_c(p)}$ such that
\begin{equation}
\label{eq:define_m}
  \DD p G= M [\DD p G]_{\b j},
\end{equation}
where $\b j\in\{1,\dots, N(p)\}^{N_c(p)}$ is a vector of indices such
that for a vector $\b v\in\mathbb C^{N_c(p)}$, we have $[\b v_{\b j}]_i =
\b v_{\b j_i}$ for $i\in \{1,\dots, N_c(p)\}$.
One interpretation of \eqref{eq:define_m} is that
all the elements of $\DD p G$ can be reconstructed from
just the subset given by $\b j$.

Since $\mathcal L G=\b 0$ for $\b x\ne 0$, $\DD{p-c} \mathcal{L} G = 0$.
In the following, we use this nullspace relation to construct a basis
of the coefficient space from which all values can be reconstructed by
way of a \emph{decompression operator $M$}.

Let $i\in\{1,\dots, N(p-c)\}$ and $\b n = \bnu(i)\in\mathcal M(p-c)$. Then
\begin{align}
  \label{eq:define_b}
  \left[\DD{p-c} \mathcal L G(\b x)\right]_i
  &= \D x^{\b n} \mathcal L G(\b x) \nonumber \\
  &=  \D x^{\b n} \left(  \sum_{\b \xi \in \mathcal{M}(c)} a_{\b \xi} \D x^{\b \xi} G(\b x) \right )
  =  \sum_{\b \xi \in \mathcal{M}(c)} a_{\b \xi} \D x^{\b \xi + \b n} G(\b x) \nonumber \\
  &= \sum_{\b m \in \mathcal{M}(p)} b_{\b n, \b m} \D x^{\b m} G(\b x),
\end{align}
with
\begin{equation}
  b_{\b n, \b m} = \begin{cases}
    a_{\b m - \b n} & \b m \ge \b n \text{ and } \b m - \b n \in \mathcal{M}(c), \\
    0 &\text{otherwise}\end{cases}
    \quad
    (\b m \in \mathcal M(p)).
  \label{eq:coeff-shift}
\end{equation}
Next, define a `PDE coefficients' matrix $P \in \mathbb C^{N(p-c)\times N(p)}$ such that
\begin{equation}
\label{eq:define_P}
  P_{i_1, i_2} = b_{\bnu(i_1), \bnu(i_2)}(\b x)\qquad(i_1\in N(p-c), i_2 \in N(p))
\end{equation} and observe that
\begin{equation}
  \label{eq:Pv}
  P \DD p G(\b x) = \b 0\qquad (\b x\ne\b 0).
\end{equation}
In a sense, each subsequent row of $P$ contains a ``shifted'' version
of the coefficients in the first row, cf. \eqref{eq:coeff-shift}.

Let $h(i): \{1,\dots,N(p-c)\} \to \{1,\dots,N(p)\}$ be the index of the last nonzero column in
the $i$th row of $P$, i.e.
\begin{equation}
  \label{eq:define_h}
  h(i) = \max \{l \in \{1,\dots,N(p)\}: b_{\bnu(i), \bnu(l)} \neq 0\},
\end{equation}
and let
\begin{equation}
  \label{eq:define_bar_j}
  \bar{\b j} =(h(1), \dots, h(N(p-c)))^T.
\end{equation}
Here and in the following, we abuse notation slightly
by using index vectors like $\bar{\b j}$ as sets, with the obvious
interpretation.
Next, we argue that $P_{[:, \bar{\b j}]}$ is an invertible
(particularly, square) submatrix of $P$ in
Lemmas~\ref{lemma:h_eq}, \ref{lemma:h_increasing} and~\ref{lemma:lower_triangular}.
We next use this invertibility to recover the full coefficient
vector $\DD{p} G(\b x)$ from its subset $(\DD{p} G(\b x))_{\b j}$ using
the fact that $\DD{p} G(\b x)$ is in the nullspace of $P$.

\begin{lemma}
\label{lemma:h_eq}
  For $i \in \{1, 2, \ldots, N(p-c)\}$,
  \(
    \bnu(h(i)) = \bnu(h(1)) + \bnu(i)
  \).
\end{lemma}
\begin{proof}
Let $i \in \{1, 2, \ldots, N(p-c)\}$.
Using the definition of $b$ in~\eqref{eq:coeff-shift}, we see that
\begin{equation}
  h(i) = \max \{l : a_{\bnu(l) - \bnu(i)} \neq 0, l \in \{1,\dots,N(p-c)\}, \bnu(l) \ge \bnu(i), \bnu(l) - \bnu(i) \in \mathcal{M}(c) \},
  \label{eq:h_i}
\end{equation}
and
\begin{equation}
  h(1) = \max \{l : a_{\bnu(l)} \neq 0, l \in \{1,\dots,N(p-c)\}, \bnu(l) \in \mathcal{M}(c) \}.
  \label{eq:h_1}
\end{equation}
Let
\[
  z = \bnu^{-1}(\bnu(h(1)) + \bnu(i)).
\]
Since $i \le N(p-c)$ and $|\bnu(h(1))| \le c$, we have $|\bnu(z)| \le p$.
Using \eqref{eq:h_1}, $a_{\bnu(z)-\bnu(i)} = a_{\bnu(h(1))} \ne 0$.
Therefore, using \eqref{eq:h_i}, we have $h(i) \ge z$.

In order to prove the lemma, we need to prove that $h(i) = z$.
To arrive at a contradiction, assume $l = h(i) > z$.
By \eqref{eq:h_i}, we may find an
$m \in \{1, \ldots, N(c)\}$ such that $\bnu(m) = \bnu(l) - \bnu(i) \in \mathcal{M}(c)$.
Then, using \eqref{eq:numbering-ineq-iff}, we have
\[
  m = \bnu^{-1}(\bnu(l)-\bnu(i)) > \bnu^{-1}(\bnu(z) - \bnu(i)) = h(1)
\] and $a_{\bnu(m)} = a_{\bnu(l) - \bnu(i)} \neq 0$.
This contradicts the maximality of $h(1)$.
\end{proof}

\begin{lemma}
\label{lemma:h_increasing}
  $h$ is a strictly increasing function, i.e. for
  $r_1, r_2 \in \{1, 2, \dots, N(p-c)\}$
  with $r_1 < r_2$, we have
  \[
    h(r_1) < h(r_2).
  \]
\end{lemma}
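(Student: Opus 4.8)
The plan is to leverage Lemma~\ref{lemma:h_eq}, which already pins down $h$ completely: it asserts $\bnu(h(i)) = \bnu(h(1)) + \bnu(i)$, and since $\bnu$ is invertible this is equivalent to the closed form
\[
  h(i) = \bnu^{-1}\!\left(\bnu(h(1)) + \bnu(i)\right)
  \qquad (i \in \{1,\dots,N(p-c)\}).
\]
In other words, once $h(1)$ is known, every other value of $h$ is obtained by shifting the multi-index $\bnu(i)$ by the \emph{fixed} multi-index $\bnu(h(1))$ and reading off the induced index. Monotonicity of $h$ should therefore follow directly from the fact that the graded monomial ordering $\bnu$ respects such shifts, which is precisely the third axiom in Definition~\ref{def:bnu}.

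Concretely, I would fix $r_1, r_2 \in \{1,\dots,N(p-c)\}$ with $r_1 < r_2$ and apply the ordering property~\eqref{itm:numbering-ineq} with the roles $i \mapsto r_1$, $j \mapsto r_2$, and $k \mapsto h(1)$. Since $r_1 < r_2$, this yields
\[
  \bnu^{-1}\!\left(\bnu(r_1) + \bnu(h(1))\right)
  < \bnu^{-1}\!\left(\bnu(r_2) + \bnu(h(1))\right).
\]
Matching the two sides against the closed form for $h$ above gives exactly $h(r_1) < h(r_2)$, which is the claim. (One could equally invoke the if-and-only-if sharpening~\eqref{eq:numbering-ineq-iff}, but only the stated implication is needed here.)

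There is essentially no obstacle remaining: the entire combinatorial content---that $h$ is a pure shift of the index by the fixed multi-index $\bnu(h(1))$---was established in Lemma~\ref{lemma:h_eq}, and the present statement is then an immediate consequence of the shift-compatibility axiom. The only point requiring a moment's care is bookkeeping, namely checking that $r_1$, $r_2$, and $h(1)$ are legitimate arguments so that the shifted indices remain within $\{1,\dots,N(p)\}$; this holds because $r_2 \le N(p-c)$ forces $|\bnu(r_2)| \le p-c$ while $|\bnu(h(1))| \le c$, so that $\bnu(r_2) + \bnu(h(1)) \in \mathcal M(p)$ and its preimage under $\bnu$ is well defined.
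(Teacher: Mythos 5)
Your proposal is correct and follows essentially the same route as the paper's proof: both invoke Lemma~\ref{lemma:h_eq} to write $h(i)$ as the index of the shifted multi-index $\bnu(h(1)) + \bnu(i)$ and then apply the shift-compatibility axiom \eqref{itm:numbering-ineq} of Definition~\ref{def:bnu} with $k = h(1)$. Your added remark on checking that the shifted indices stay in $\mathcal M(p)$ is a harmless bit of extra bookkeeping that the paper leaves implicit.
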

\begin{proof}
Using Definition~\ref{def:bnu},
\[
   \bnu^{-1}(\bnu(r_1) + \bnu(k)) < \bnu^{-1}(\bnu(r_2) + \bnu(k))
\] for $k \in \{1, \ldots, N(p)\}$ since $r_1 < r_2$.
Replacing $k$ by $h(1)$ and using Lemma~\ref{lemma:h_eq} we have,
\begin{align*}
  h(r_1) &= \bnu^{-1}(\bnu(h(r_1))) \\
     &= \bnu^{-1}(\bnu(h(1)) + \bnu(r_1)) \\
     &< \bnu^{-1}(\bnu(h(1)) + \bnu(r_2)) \\
     &= \bnu^{-1}(\bnu(h(r_2))) \\
     &= h(r_2).
\end{align*}
\end{proof}

\begin{lemma}
  \label{lemma:lower_triangular}
  $P_{[:, \bar{\b j}]}$ is an invertible lower triangular matrix.
\end{lemma}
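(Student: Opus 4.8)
The plan is to read off invertibility from the two structural facts already in hand: that $h$ is strictly increasing (Lemma~\ref{lemma:h_increasing}) and that $h(i)$ is, by its definition \eqref{eq:define_h}, the \emph{last} column index at which row $i$ of $P$ is nonzero. Together these force $P_{[:, \bar{\b j}]}$ to be square and lower triangular with a nonvanishing diagonal, whence invertibility is immediate because the determinant equals the product of the diagonal entries.

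First I would note that the selected column indices $\bar{\b j} = (h(1), \dots, h(N(p-c)))$ are pairwise distinct, since $h$ is strictly increasing; thus $P_{[:, \bar{\b j}]}$ picks out $N(p-c)$ distinct columns from a matrix having $N(p-c)$ rows and is therefore square. Its $(i_1, i_2)$ entry is $b_{\bnu(i_1), \bnu(h(i_2))}$. To establish lower triangularity I would show that every strictly-above-diagonal entry vanishes: fixing $i_1 < i_2$, strict monotonicity gives $h(i_1) < h(i_2)$, while the maximality in \eqref{eq:define_h} means $b_{\bnu(i_1), \bnu(l)} = 0$ for all $l > h(i_1)$; taking $l = h(i_2)$ yields $b_{\bnu(i_1), \bnu(h(i_2))} = 0$.

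Finally I would check the diagonal and conclude. For $i_1 = i_2 = i$ the diagonal entry is $b_{\bnu(i), \bnu(h(i))}$, which is nonzero exactly because $h(i)$ is chosen in \eqref{eq:define_h} as an index where $b_{\bnu(i), \cdot}$ does not vanish. Since a triangular matrix is invertible iff its diagonal is free of zeros, we obtain $\det P_{[:, \bar{\b j}]} = \prod_{i=1}^{N(p-c)} b_{\bnu(i), \bnu(h(i))} \neq 0$. The only step that demands genuine care is keeping the orientation straight: it is essential that the columns are extracted in the order prescribed by the strictly increasing $h$, so that the ``last nonzero column'' characterization places the forced zeros in the upper-right triangle rather than the lower-left; everything else is a direct consequence of the preceding lemmas, so I expect no substantive obstacle.
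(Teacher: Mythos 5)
Your proof is correct and follows essentially the same route as the paper's: it invokes the strict monotonicity of $h$ from Lemma~\ref{lemma:h_increasing} together with the ``last nonzero column'' characterization in \eqref{eq:define_h} to conclude that $P_{[:, \bar{\b j}]}$ is square and lower triangular with nonzero diagonal, hence invertible. You merely spell out the above-diagonal vanishing and the determinant argument more explicitly than the paper does.
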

Here we have denoted extracting the columns with indices $\b j$ from $T$ by the
`Matlab(R)-style' notation $T_{[:, \b j]}$.
\begin{proof}
  This follows from Lemma~\ref{lemma:h_increasing} because, in the $i$th row of
  $P$, the $h(i)$th column is the last nonzero entry.
  Therefore, $P_{[:, \bar{\b j}]}$ is lower triangular with nonzero
  diagonal entries.
\end{proof}

Let $\b j \in \{1, 2, \dots, N(p)\}^{N(p) - |\bar{\b j}|}$ be an (unconstrained) numbering of
the complement of $\bar{\b j}$, i.e. of
$\{m \in \{1, 2, \dots, N(p)\}: m \not \in \bar{\b j}\}$.
Using Lemma~\ref{lemma:lower_triangular} and the definition of $P$,
\begin{align*}
  P \DD{p} G(\b x) &= 0 \\
  \Leftrightarrow \begin{bmatrix}P_{[:, \b j]} & P_{[:, \bar{\b j}]} \end{bmatrix}
  \begin{bmatrix}\DD{p} G(\b x)_{\b j} \\ \DD{p} G(\b x)_{\bar{\b j}} \end{bmatrix} &= 0 \\
  \Leftrightarrow P_{[:, \b j]} \DD{p}G(\b x)_{\b j} + P_{[:, \bar{\b j}]} \DD{p}G(\b x)_{\bar{\b j}}  &= 0 \\
  \Leftrightarrow \DD{p}G(\b x)_{\bar{\b j}} &= -(P_{[:, \bar{\b j}]})^{-1} P_{[:, \b j]} \DD{p}G(\b x)_{\b j} \\
  \Leftrightarrow \begin{bmatrix}\DD{p} G(\b x)_{\b j} \\ \DD{p} G(\b x)_{\bar{\b j}} \end{bmatrix} &= \
    \underbrace{
        \begin{bmatrix} I \\ -P_{[:, \bar{\b j}]}^{-1} P_{[:, \b j]} \end{bmatrix}
    }_{M':=}
    \DD{p}G(\b x)_{\b j}.
\end{align*}
As a row permutation of the matrix $M'$ above, we define
the \emph{decompression operator} $M \in \mathbb{C}^{N(p) \times N(p-c)}$ as
\begin{align*}
  M_{[\b j, :]} &= I \\
  M_{[\bar{\b j}, :]} &= -P_{[:, \bar{\b j}]}^{-1} P_{[:, \b j]}
\end{align*}
resulting in
\begin{equation}
  \label{eq:decompression}
  \DD{p}G(\b x) = M [\DD{p}G(\b x)]_{\b j},
\end{equation}
corroborating the naming.
\begin{theorem}
  \label{thm:mult_decompression}
  For a vector $\b s \in \mathbb{C}^{N(p)}$, $M \b s$
  can be computed in $\bigO{p^d}$ operations.
  Similarly, for a vector $\b s \in \mathbb{C}^{N(p-c)}$, $M^T \b s$
  can be computed in $\bigO{p^d}$ operations.
\end{theorem}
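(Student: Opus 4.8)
The plan is to compute the action of $M$ (and of $M^T$) without ever assembling $M$ as a dense matrix: a dense apply would cost $\bigO{N(p)\cdot N_c(p)}=\bigO{p^{2d-1}}$, which misses the target. Instead I would work directly from the block structure recorded in the definition of $M$, namely that $M_{[\b j,:]}=I$ and $M_{[\bar{\b j},:]}=-P_{[:,\bar{\b j}]}^{-1}P_{[:,\b j]}$. The key decision is to \emph{never} form $P_{[:,\bar{\b j}]}^{-1}$ (whose entries are generally dense), but to apply it implicitly by a triangular solve — which is legitimate precisely because Lemma~\ref{lemma:lower_triangular} guarantees that $P_{[:,\bar{\b j}]}$ is lower triangular and invertible.

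First I would establish the sparsity of $P$. By \eqref{eq:coeff-shift}, the nonzero entries in row $\b n$ of $P$ are exactly the columns $\b m$ with $\b m\ge\b n$ and $\b m-\b n\in\mathcal M(c)$, of which there are at most $N(c)$; since the PDE order $c$ is a fixed constant, this is $\bigO 1$ per row, so $\operatorname{nnz}(P)=\bigO{N(p-c)}=\bigO{p^d}$. The column-subset matrices $P_{[:,\b j]}$ and $P_{[:,\bar{\b j}]}$ inherit this $\bigO 1$-per-row structure. Consequently a sparse matrix--vector product with either of them (or with their transposes) costs $\bigO{p^d}$, and forward substitution against the sparse lower-triangular $P_{[:,\bar{\b j}]}$ touches only its stored nonzeros and so costs $\bigO{\operatorname{nnz}(P_{[:,\bar{\b j}]})}=\bigO{p^d}$, in place of the $\bigO{N(p-c)^2}=\bigO{p^{2d}}$ of a dense triangular solve.

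With these ingredients the algorithm for $M\b s$ is: (i) form $\b t=P_{[:,\b j]}\b s$ by a sparse matvec; (ii) solve $P_{[:,\bar{\b j}]}\b u=-\b t$ by forward substitution; (iii) scatter $\b s$ into the $\b j$ positions and $\b u$ into the $\bar{\b j}$ positions of the output. Each step is $\bigO{p^d}$, and since the output already has $N(p)=\bigO{p^d}$ entries this is optimal up to constants. For $M^T\b s$ I would use the transposed identity $M^T\b s=\b s_{\b j}-P_{[:,\b j]}^T\bigl(P_{[:,\bar{\b j}]}^{-T}\b s_{\bar{\b j}}\bigr)$, where the inner solve against the upper-triangular $P_{[:,\bar{\b j}]}^T$ is carried out by back substitution (again $\bigO{p^d}$) and is followed by a sparse transposed matvec; reading $\b s_{\bar{\b j}}$ (size $\bigO{p^d}$) fixes the floor here.

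I expect the main obstacle to be bookkeeping rather than mathematical depth: one must argue carefully that applying $P_{[:,\bar{\b j}]}^{-1}$ by substitution and applying $P_{[:,\b j]}$ both genuinely respect the $\bigO 1$-per-row sparsity, so that no hidden per-entry factor of $\bigO{p}$ (or worse, an implicit densification of the inverse) creeps in. The only nontrivial input borrowed from earlier is the triangularity and invertibility of Lemma~\ref{lemma:lower_triangular}; the remainder reduces to the sparsity bound on $P$ together with the standard cost of sparse triangular substitution.
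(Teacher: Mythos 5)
Your proposal is correct and follows essentially the same route as the paper's proof: both rest on the observation that $P$ has $\bigO{1}$ nonzeros per row (so $\bigO{p^d}$ nonzeros total), apply $P_{[:,\b j]}$ by a sparse matrix--vector product, and apply $P_{[:,\bar{\b j}]}^{-1}$ implicitly by forward substitution using the triangularity guaranteed by Lemma~\ref{lemma:lower_triangular}, with the transpose case handled analogously. Your write-up is, if anything, slightly more explicit than the paper's about never densifying the inverse and about the transposed identity used for $M^T\b s$, but there is no substantive difference in the argument.
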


\begin{proof}
  Multiplying $M$ by $\b s$ amounts to multiplication
  by the identity $I$ and $-P^{-1}_{[:, \bar{\b j}]} P_{[:, \b j]}$.
  The former is free, and, for the latter,
  let $k$ be the number of nonzero elements in the first row of $P$, where
  $k=\bigO{1}$ (in the expansion order) as it depends only on the PDE.
  Observe that each row in $P$ has the same number of nonzero
  elements as the first row. Therefore, the number
  of nonzero entries in $P$ is $k\cdot N(p-c) = \bigO{p^d}$.
  So a matrix-vector product with $P_{[:, \b j]}$ requires only
  $\bigO{p^d}$ operations. Since $P_{[:,\bar{\b j}]}$ is lower triangular
  (Lemma~\ref{lemma:lower_triangular}) and also has $\bigO{p^d}$
  nonzero entries, a matrix-vector product with $P^{-1}_{[:, \bar{\b j}]}$
  can be carried out by forward substitution in $\bigO{p^d}$ operations.
  As a result, a matrix-vector product with $P^{-1}_{[:, \bar{\b j}]} P_{[:, \b j]}$
  can be carried out in $\bigO{p^d}$ operations.
  The argument for $M^T$ is analogous.
\end{proof}

Using~\eqref{eq:decompression} for the
storage of Taylor series coefficients means that $|\b j|$ coefficients
suffice to recover the entire Taylor expansion of a function $u$
satisfying $\mathcal L G(\b x)=0$ at a point $\b x$.
The number of entries of $\b j$ satisfies
\[
  |\b j| 
    = |\mathcal M(p) \setminus \bar{\b j}|\\
    = N(p) - |\bar{\b j}| \\
    = N(p) - N(p-c) \\
    \stackrel{\eqref{eq:np-count}}= \binom{p}{d} - \binom{p-c}{d} \\
    = \bigO{p^{d-1}},
\]
so that we only need to store $\bigO{p^{d-1}}$ coefficients to represent a local
Taylor expansion. We show the multi-index `footprint' of 
$\b j$ and $\bar{\b j}$ for some example PDEs in Figure~\ref{fig:j}.

We describe the formation of the local expansions in more detail
in Section~\ref{sec:p2l} and their evaluation in Section~\ref{sec:l2p}.
Since the basis functions of the multipole expansion are the derivatives of the kernel,
they are amenable to compression via~\eqref{eq:decompression} as well.
We describe formation of a compressed multipole expansion in Section~\ref{sec:p2m}
and evaluation of the multipole expansion in Section~\ref{sec:m2p}.

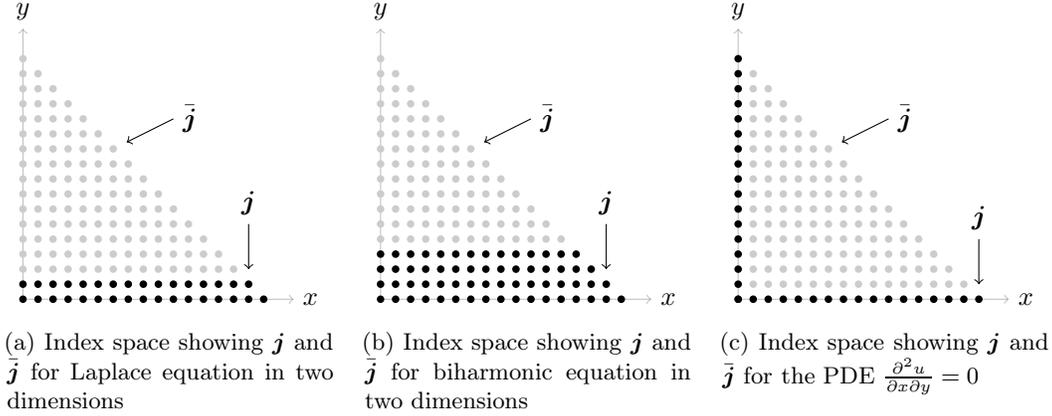
\begin{figure}
\centering
\begin{subfigure}[t]{.28\linewidth}
\begin{tikzpicture}[scale=0.8]
\draw[draw opacity=0.25,->] (0,0)--(4.5,0) node[right]{$x$};
\draw[draw opacity=0.25,->] (0,0)--(0,4.5) node[above]{$y$};
\foreach \i in {0.25,0.5,...,4.0}{
  \foreach \j in {0,0.25,...,\i} {
    \draw (\i-\j,\j) node[fill=black!20,circle,scale=0.3] {};
  }
}
\foreach \i in {0,0.25,0.5,...,4.0}{
  \draw (\i, 0) node[fill=black,circle,scale=0.3] {};
}
\foreach \i in {0,0.25,0.5,...,3.75}{
  \draw (\i, 0.25) node[fill=black,circle,scale=0.3] {};
}
\draw[->,shorten >=2mm] (2.5,3) node[right] {$\bar{\b j}$} -- (1.5, 2.5);
\draw[->,shorten >=2mm] (3.75,1.25) node[above] {$\b j$} -- (3.75, 0.25);
\end{tikzpicture}
\caption{
Index space showing $\b j$ and $\bar{\b j}$ for Laplace equation in two dimensions
}
\label{fig:j_laplace}
\end{subfigure}\hspace{3mm}
\begin{subfigure}[t]{.28\linewidth}
\begin{tikzpicture}[scale=0.8]
\draw[draw opacity=0.25,->] (0,0)--(4.5,0) node[right]{$x$};
\draw[draw opacity=0.25,->] (0,0)--(0,4.5) node[above]{$y$};
\foreach \i in {0.25,0.5,...,4.0}{
  \foreach \j in {0,0.25,...,\i} {
    \draw (\i-\j,\j) node[fill=black!20,circle,scale=0.3] {};
  }
}
\foreach \i in {0,0.25,0.5,...,4.0}{
  \draw (\i, 0) node[fill=black,circle,scale=0.3] {};
}
\foreach \i in {0,0.25,0.5,...,3.75}{
  \draw (\i, 0.25) node[fill=black,circle,scale=0.3] {};
}
\foreach \i in {0,0.25,0.5,...,3.5}{
  \draw (\i, 0.5) node[fill=black,circle,scale=0.3] {};
}
\foreach \i in {0,0.25,0.5,...,3.25}{
  \draw (\i, 0.75) node[fill=black,circle,scale=0.3] {};
}
\draw[->,shorten >=2mm] (2.5,3) node[right] {$\bar{\b j}$} -- (1.5, 2.5);
\draw[->,shorten >=2mm] (3.75,1.25) node[above] {$\b j$} -- (3.75, 0.25);
\end{tikzpicture}
\caption{
Index space showing $\b j$ and $\bar{\b j}$ for biharmonic equation in two dimensions
}
\label{fig:j_biharmonic}
\end{subfigure}\hspace{3mm}
\begin{subfigure}[t]{.28\linewidth}
\begin{tikzpicture}[scale=0.8]
\draw[draw opacity=0.25,->] (0,0)--(4.5,0) node[right]{$x$};
\draw[draw opacity=0.25,->] (0,0)--(0,4.5) node[above]{$y$};
\foreach \i in {0.25,0.5,...,4.0}{
  \foreach \j in {0,0.25,...,\i} {
    \draw (\i-\j,\j) node[fill=black!20,circle,scale=0.3] {};
  }
}
\foreach \i in {0,0.25,0.5,...,4.0}{
  \draw (\i, 0) node[fill=black,circle,scale=0.3] {};
  \draw (0, \i) node[fill=black,circle,scale=0.3] {};
}
\draw[->,shorten >=2mm] (2.5,3) node[right] {$\bar{\b j}$} -- (1.5, 2.5);
\draw[->,shorten >=2mm] (4.0,1.0) node[above] {$\b j$} -- (4.0, 0.0);
\end{tikzpicture}
\caption{
Index space showing $\b j$ and $\bar{\b j}$ for the PDE $\frac{\partial^2 u}{\partial x \partial y}= 0$
}
\label{fig:j_weird}
\end{subfigure}
\caption{Index space showing $\b j$, $\bar{\b j}$ for order 16 Taylor series satisfying
different two-dimensional partial differential equations}
\label{fig:j}
\end{figure}

\subsection{Compressed Local Expansion}
\label{sec:p2l}
Let $\b y \in \mathbb R^d$ be a source point,  $\b c\in \mathbb R^d$ an
expansion center, and $\b x\in\mathbb R^d$ a target point
with $\norm{\b x-\b c}<\norm{\b y-\b c}$.
Recall from~\eqref{eq:local} that the $i$th coefficient of a local expansion is
\[
  \alphal_i = \sum_{\b y \in S}\frac{\D x^{\bnu(i)} \greenf(\b x, \b y)
    |_{\b x = \b c }}{\bnu(i)!} =
    \sum_{\b y \in S}\frac{[\DD p G(\b c - \b y)]_{i}}{\bnu(i)!}.
\]
Using the decompression machinery from Section~\ref{sec:la_deriv}, it
suffices to compute and store
\begin{equation}
  \betal = \sum_{\b y \in S} [\DD p G(\b c - \b y)]_{\b j}
  \label{eq:stored_local_coeffs}
\end{equation}
to recover the uncompressed representation via
\begin{equation}
  \alphal_i = \sum_{\b y \in S}\frac{[\DD p G(\b c - \b y)]_{i}}{\bnu(i)!}
  = \frac{[M \sum_{\b y \in S} [\DD p G(\b c - \b y)]_{\b j}]_{i}}{\bnu(i)!} 
  = \frac{\left[M \betal\right]_{i}}{\bnu(i)!}.
  \label{eq:compressed-local-form}
\end{equation}
Next, we consider the cost of evaluating \eqref{eq:stored_local_coeffs}.

\begin{definition}
  Let $w(p)$ be the amortized number of floating point operations
  required to calculate \emph{one} derivative of $G$, assuming
  \emph{all} of the derivatives $[\DD{p}G(\b x)]_{\b j}$ required for
  the `compressed' subset $\bnu(\b j)$ are being calculated.
  \label{def:wp}
\end{definition}
Given the quantity $w(p)$, forming a compressed local expansion requires $\bigO{p^{d-1}w(p)}$
operations. Section~\ref{sec:calculate_derivatives}
gives procedures for the computation of derivatives and provides estimates of
$w(p)$ for different kernels.

\subsection{Compressed Local Evaluation}
\label{sec:l2p}

In local expansion evaluation, the inner product of the full set of Taylor
coefficients and the monomials is calculated per target.
Recall from~\eqref{eq:local} that the local Taylor expansion for a target
$\b x \in T$ is,
\[
  G(\b x - \b y)
  \approx
  \sum_{i \in \{1, 2, \ldots N(p)\}} \alphal_i (\b x - \b c)^{\bnu(i)},
\] where
\[
  \alphal_i = \frac{\left[M \betal\right]_{i}}{\bnu(i)!}
  \qquad\text{and}\qquad
  \betal = \sum_{\b y \in S} [\DD p G(\b c - \b y)]_{\b j}.
\]
Using Theorem~\ref{thm:mult_decompression}, calculating the
full set of Taylor coefficients $\alphal_i$ from the stored coefficients
$\betal_i$ requires $\bigO{p^d}$ operations. Calculating the monomials
$(\b x - \b c)^{\bnu(i)}$ requires $\bigO{p^d}$ operations, and the inner
product of the monomials and the coefficients require the same amount of
work. Overall, evaluation of a compressed local expansion requires
$\bigO{p^d}$ operations.

\subsection{Compressed Multipole Expansion}
\label{sec:p2m}
While compression and decompression of local expansions amounted
to a straightforward application of \eqref{eq:decompression},
compression and decompression of multipole expansions
requires additional machinery, which we introduce in this section.
We compress a multipole expansion to $\bigO{p^{d-1}}$ terms at a cost of
$\bigO{p^d}$ operations.

Let $\alpham \in \mathbb{C}^{|\mathcal{M}(p)|}$ be a vector of
coefficients of a multipole expansion.  From \eqref{eq:mpole}, we know that
\[
  \alpham_i = \sum_{\b y \in S}\frac{(\b c - \b
  y)^{\bnu(i)}}{\bnu(i)!},
\] where $i \in \{1, 2, \ldots, N(p)\}$.
Let $\DD p G(\b x - \b c)$ be a vector of derivatives.
Let $M$ be the decompression
operator and $\b j$ be the vector of indices of $\DD p G(\b x)$ that
are stored.
Recall from \eqref{eq:mpole} that the multipole expansion around center
$\b c$ evaluated at a target point $\b x \in T$ is,
\[
  \DD{p} G(\b x - \b c)^T \alpham.
\]
Inserting~\eqref{eq:decompression} into the above equation
and using associativity, we obtain,
\[
  \DD{p} G(\b x - \b c)^T \b s = (M  [\DD{p} G(\b x - \b c)_{\b j}])^T \alpham
    = [\DD{p} G(\b x - \b c)_{\b j}]^T M^T \alpham
    = [\DD{p} G(\b x - \b c)_{\b j}]^T \betam,
\] where
\begin{equation}
  \label{eq:define_beta_multipole}
  \betam = M^T \alpham.
\end{equation}
We choose to store \(\betam \) instead of \(\alpham\), requiring only
$\bigO{p^{d-1}}$ elements to be stored instead of $\bigO{p^d}$.
Using Theorem~\ref{thm:mult_decompression} we know that this
computation requires only $\bigO{p^d}$ operations.

\subsection{Compressed Multipole Evaluation}
\label{sec:m2p}
Recall from Section~\ref{sec:p2m} that the multipole expansion around
center $\b c$ evaluated at a target point $\b x \in T$ is given by,
\begin{equation}
  \label{eq:multipole_eval_beta}
  \greenf(x, y) = [\DD{p} G(\b x - \b c)_{\b j}]^T \betam,
\end{equation}
where $\betam$ are the stored coefficients, and $\b j$ is the vector of
stored indices of $\DD{p} G(\b x - \b c)$.
For multipole evaluation, the derivatives $\DD{p}G(\b x - \b c)_{\b j}$
need to be calculated, and there are $\bigO{p^{d-1}}$ derivatives.
Using Definition~\ref{def:wp}, calculating the derivatives costs
$\bigO{p^{d-1}} w(p)$ operations.
The last step in the evaluation is a dot product of the coefficients $\betam$
and the kernel derivatives $\DD{p}G(\b x - \b c)_{\b j}$, which
requires $\bigO{p^{d-1}}$ operations. Therefore, multipole
evaluation performs $\bigO{p^{d-1}w(p)}$ operations.

\section{Translations on Algebraically Compressed Expansions}
\label{sec:translations}
In this section, we consider algorithms for translating compressed Taylor series
expansions as well as costs and errors incurred by them.
Subsections~\ref{sec:m2l},~\ref{sec:l2l}, and~\ref{sec:m2m}
give algorithms for translating a compressed multipole expansion to a
compressed local expansion, shifting the center of a compressed local expansion
and shifting the center of a compressed multipole expansion, respectively.

\subsection{Multipole-to-Local Translation}
\label{sec:m2l}
To set the stage for the statement of the multipole-to-local
translation algorithm in compressed representation, consider a linear
PDE of order $c$ and number of dimensions $d$ in the form
\[
  \mathcal L G(\b x) =
  \sum_{|\b m| = c} a_{\b m} \D x^{\b m} G (\b x) +
    \sum_{|\b m| < c} a_{\b m} \D x^{\b m} G (\b x) = 0,
\] where we have separated the highest-order terms from the lower-order terms.

The following property (and its consequences as given by the subsequent
lemmas) yield a reduction in the asymptotic cost of our translation
operators:
\begin{property}
\label{prop:nice}
\[
  a_{c \b e_i} \neq 0 \text{ for some } i\in\{1,\dots,d\}.
\]
\end{property}
\begin{lemma}
  A constant-coefficient elliptic PDE has property~\ref{prop:nice}.
\end{lemma}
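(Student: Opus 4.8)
The plan is to reduce the claim to a statement about the \emph{principal symbol} of $\mathcal L$ and then invoke the definition of ellipticity directly. Recall that the principal symbol of the operator in \eqref{eq:define_pde_op} is the top-order homogeneous polynomial $\sigma(\b \xi) = \sum_{|\b m| = c} a_{\b m} \b \xi^{\b m}$, and that $\mathcal L$ being elliptic means precisely that $\sigma(\b \xi) \neq 0$ for every nonzero real $\b \xi \in \mathbb R^d$. (Depending on convention the symbol carries a factor $i^{c}$ from differentiating the exponential $e^{i\b \xi \cdot \b x}$; since this factor is common and nonzero, it does not affect the vanishing set, so I would work with the factor-free polynomial $\sigma$ above.) The coefficients appearing in Property~\ref{prop:nice}, namely the $a_{c\b e_i}$, are exactly the coefficients of the pure highest-order derivatives $\partial^{c}/\partial x_i^{c}$, i.e. the coefficients of the monomials $\xi_i^{c}$ in $\sigma$.

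The key step is to evaluate $\sigma$ along a coordinate direction. Fixing $i\in\{1,\dots,d\}$ and taking $\b \xi = \b e_i$ (the $i$th standard basis vector), I would observe that for any $\b m$ with $|\b m| = c$ the monomial $\b e_i^{\b m} = \prod_{j=1}^d (\b e_i)_j^{m_j}$ vanishes unless $m_j = 0$ for all $j \neq i$; since $|\b m| = c$, the only surviving multi-index is $\b m = c\b e_i$, for which $\b e_i^{c\b e_i} = 1$. Hence $\sigma(\b e_i) = a_{c\b e_i}$.

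The conclusion is then immediate: since $\b e_i \neq \b 0$, ellipticity forces $\sigma(\b e_i) = a_{c\b e_i} \neq 0$. This in fact holds for \emph{every} $i$, which is stronger than Property~\ref{prop:nice} demands, as the latter asks only that some such coefficient be nonzero. I do not anticipate a substantive obstacle here; the only point requiring care is to pin down the precise convention for ``elliptic'' --- in particular, that the relevant nonvanishing is over real $\b \xi$ and concerns the principal (top-order) symbol --- so that evaluating at the real vector $\b e_i$ is legitimate. Once that convention is fixed, the argument reduces to a single substitution.
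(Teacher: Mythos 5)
Your proposal is correct and matches the paper's proof in substance: both evaluate the principal symbol at the coordinate vectors $\b e_i$, note that $\mathcal P(\b e_i) = a_{c\b e_i}$, and invoke the nonvanishing of the symbol at nonzero vectors (the paper phrases this as a contradiction, you argue directly, and you additionally note the stronger conclusion that \emph{every} $a_{c\b e_i}$ is nonzero). Your concern about real versus complex $\b \xi$ is moot here since $\b e_i$ lies in both; the paper's cited definition uses nonzero $\b \xi \in \mathbb C^d$.
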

\begin{proof}
  Using the definition for an elliptic PDE given by~\citet{cosner1991definition}
  a constant-coefficient elliptic PDE is a PDE such that the principal symbol
  \[
    \mathcal P(\b \xi)= \sum_{|\b m| = c}a_{\b m} \xi^{\b m} \neq 0
  \]
  for all nonzero vectors $\xi \in \mathbb{C}^d$.
  Suppose an elliptic PDE does not have Property~\ref{prop:nice},
  then, $a_{c \b e_i} = 0$ for all $i\in\{1,\dots,d\}$.
  Then $\mathcal P(\b e_i)=0$ for $i\in\{1,\dots,d\}$,
  contradicting ellipticity.
\end{proof}
Fast Fourier Transforms (FFTs) will play an important role in our
translation algorithm. In this next lemma, we bound an expression that
determines the size of the FFTs carried out as part of the evaluation
of the translation operator.
\begin{lemma}
  \label{lemma:max_prod_nice}
  There exists a graded monomial ordering $\bnu$ such that, for a PDE with
  Property~\ref{prop:nice},
  \begin{equation}
    \prod_{i=1}^{d}\max_{s \in \b j} \bnu(s)_i=\bigO{p^{d-1}},
    \label{eq:max_prod_nice}
  \end{equation}
  where $\b j$ depends on the ordering $\bnu$.
\end{lemma}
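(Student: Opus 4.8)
The plan is to construct $\bnu$ explicitly as a graded lexicographic order (grlex) in which the variable $x_k$ receives top priority, where $k$ is any index with $a_{c\b e_k}\neq 0$; such a $k$ exists by Property~\ref{prop:nice}. With this choice I expect $\b j$ to be a thin ``slab'' of multi-indices whose $k$-th exponent is small, which makes the product \eqref{eq:max_prod_nice} immediate.

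First I would check that grlex satisfies Definition~\ref{def:bnu}: condition~(1) holds because $\b 0$ is its unique minimum, condition~(2) is exactly the graded part built into grlex, and condition~(3) is the standard translation-invariance of a monomial order, which I would invoke from \citep{cox1994ideals}.

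The key step is to identify the leading index $\bnu(h(1))$. By \eqref{eq:h_1}, $\bnu(h(1))$ is the $\bnu$-largest element of $\mathcal M(c)$ carrying a nonzero PDE coefficient (for $p\ge 2c$, so that all of $\mathcal M(c)$ is available among the first $N(p-c)$ indices). Because the ordering is graded and compares the $k$-th exponent first, $c\b e_k$ --- the unique degree-$c$ multi-index whose $k$-th exponent equals $c$ --- is the grlex-maximum of all of $\mathcal M(c)$; since $a_{c\b e_k}\neq 0$, this forces $\bnu(h(1)) = c\b e_k$. Lemma~\ref{lemma:h_eq} then gives $\bnu(h(i)) = c\b e_k + \bnu(i)$, so that $\bar{\b j}$ corresponds precisely to $\{c\b e_k + \b n : \b n\in\mathcal M(p-c)\} = \{\b m\in\mathcal M(p): m_k\ge c\}$, and hence $\bnu(\b j) = \{\b m\in\mathcal M(p): m_k\le c-1\}$.

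Finally I would evaluate the product coordinatewise over this slab. The $k$-th factor is $\max_{s\in\b j}\bnu(s)_k = c-1 = \bigO{1}$, since $c$ depends only on the PDE; for every $i\neq k$ the factor is $\max_{s\in\b j}\bnu(s)_i = p$, attained at $p\b e_i$ (whose $k$-th exponent is $0$). Multiplying yields $(c-1)\,p^{d-1} = \bigO{p^{d-1}}$. The only genuinely delicate point is pinning down $\bnu(h(1))=c\b e_k$, i.e.\ arranging that the PDE's leading monomial is the pure power $c\b e_k$; once the slab description of $\b j$ is in hand, the coordinatewise maximization and the verification that grlex meets Definition~\ref{def:bnu} are routine.
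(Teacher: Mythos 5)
Your proposal is correct and follows essentially the same route as the paper: choose a degree-lexicographic ordering prioritizing the coordinate $k$ with $a_{c\b e_k}\neq 0$, deduce $\bnu(h(1))=c\b e_k$, apply Lemma~\ref{lemma:h_eq} to identify $\bar{\b j}$ with $\{\b m\in\mathcal M(p): m_k\ge c\}$, and read off the factors $c-1$ and $p$ in the product. Your added remarks on verifying Definition~\ref{def:bnu} for grlex and on needing $p$ large enough relative to $c$ are sensible housekeeping that the paper leaves implicit.
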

\begin{proof}
  Let $k \in \{1,\dots,d\}$ be such that $a_{c \b e_k} \neq 0$.
  Let $\bnu$ be a degree-lexicographic ordering with the $k$th entry varying slowest.
  Then $\bnu^{-1}(c \b e_k)=N(c)$, i.e. it is the last entry in the
  ordering of multi-indices $\mathcal M(c)$.
  In terms of $h$ from \eqref{eq:define_h},
  $h(1) = \max \bnu^{-1}(\mathcal M(c)) = \bnu^{-1}(c \b e_k)$, i.e.
  $\bnu(h(1)) = c \b e_k$.
  From Lemma~\ref{lemma:h_eq}, we have
  \[
    \bnu(h(q)) = \bnu(h(1)) + \bnu(q) = c \b{e}_k + \bnu(q)
    \qquad(q\in \{1, 2, \ldots, N(p-c)\}).
  \]
  Therefore all multi-indices $\b m \in \mathcal{M}(p)$ with
  $m_k \geq c$ belong to the unrepresented indices $\bar{\b j}$,
  as defined in \eqref{eq:define_bar_j}.
  On the other hand, for represented indices $\b j$,
  $\bnu(s)_k < c$ for $s \in \b j$.
  Thus
  \begin{align*}
  \max_{s \in \b j} \bnu(s)_k &= c - 1=O(1) \qquad \text{(as $p\to\infty$)},
  \intertext{while}
  \max_{s \in \b j} \bnu(s)_i &= O(p)\qquad (i\ne k).
  \end{align*}
\end{proof}
Consider the three examples in Figure~\ref{fig:j}.
We know that the Laplace equation and biharmonic equation satisfy
Property~\ref{prop:nice} and $\b j$ given in Figures~\ref{fig:j_laplace}
and \ref{fig:j_biharmonic} are such that the $\max_{s \in \b j} \bnu(s)_2$
is $1$ and $3$ respectively, confirming Lemma~\ref{lemma:max_prod_nice}.
We also know that the PDE $\partial^2 u/\partial x \partial y = 0$ does not
satisfy the Property~\ref{prop:nice}, and we can see that the
resulting numbering does not satisfy \eqref{eq:max_prod_nice} since
$\max_{s \in \b j} \bnu(s)_2 = \max_{s \in \b j} \bnu(s)_1 = p$
as shown in Figure~\ref{fig:j_weird}.

We are now ready to state an algorithm for multipole-to-local
translation, including for the special case of PDEs with
Property~\ref{prop:nice}.

\begin{theorem}
Translating a multipole expansion in compressed representation around center $\b c_1$
to a local expansion in compressed representation around center $\b c_2$ can be
achieved with
\begin{itemize}
\item $\bigO{p^{d-1} log(p)}$ work in $d$ dimensions for PDEs with Property~\ref{prop:nice},
\item $\bigO{p^2}$ work in two dimensions, and
\item $\bigO{p^{d} log(p)}$ work in $d$ dimensions.
\end{itemize}
\end{theorem}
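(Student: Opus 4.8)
The plan is to reduce the compressed multipole-to-local translation to a single $d$-dimensional discrete correlation and then accelerate that correlation with an FFT, using Lemma~\ref{lemma:max_prod_nice} to control the transform size in the Property~\ref{prop:nice} case and a direct dense evaluation in the two-dimensional case.

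First I would derive the translation formula directly in the stored representation, bypassing any attempt to reconstruct the uncompressed coefficient vectors. Starting from the compressed multipole evaluation \eqref{eq:multipole_eval_beta}, the field produced by the multipole around $\b c_1$ is $\phi(\b x)=\sum_{s\in\b j}\betam_s\,[\DD p G(\b x-\b c_1)]_s$. The stored local coefficient attached to an index $t\in\b j$ is, by the (un-normalized) convention of \eqref{eq:stored_local_coeffs}, the Taylor coefficient $\D x^{\bnu(t)}\phi\big|_{\b x=\b c_2}$. Differentiating $\phi$ term by term and using translation invariance collapses each summand to a single higher-order derivative of $G$, yielding
\begin{equation}
  \betal_t=\sum_{s\in\b j} g_{\bnu(t)+\bnu(s)}\,\betam_s,\qquad
  g_{\b m}:=\D x^{\b m}G(\b c_2-\b c_1),\qquad t\in\b j.
  \label{eq:plan-m2l}
\end{equation}
The essential observation is that the operator entry depends on $s$ and $t$ only through the \emph{sum} $\bnu(t)+\bnu(s)$, so \eqref{eq:plan-m2l} is a discrete cross-correlation of the array $(\betam_s)$ against the kernel-derivative array $(g_{\b m})$, with input and output indices both confined to the $\bigO{p^{d-1}}$ footprint $\bnu(\b j)$.

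Next I would embed $(\betam_s)$ into the rectangular bounding box $\prod_{i=1}^d\{0,\dots,b_i\}$ of $\bnu(\b j)$, where $b_i=\max_{s\in\b j}\bnu(s)_i$, filling off-footprint entries with zeros; the derivatives $g_{\b m}$ then range over the Minkowski sumset $\bnu(\b j)+\bnu(\b j)\subseteq\prod_i\{0,\dots,2b_i\}$. Zero-padding to length at least $2b_i+1$ along each axis (to realize linear rather than cyclic correlation) and applying a $d$-dimensional FFT, a pointwise multiplication by the conjugated transform of $(g_{\b m})$, and an inverse FFT evaluates \eqref{eq:plan-m2l} for all $t\in\b j$ at once. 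The cost is $\bigO{V\log V}=\bigO{V\log p}$, where $V=\prod_i(2b_i+1)$ is the padded box volume. In the worst case $\bnu(\b j)$ reaches order $p$ in every coordinate, so $V=\bigO{p^d}$ and the transform costs $\bigO{p^d\log p}$, giving the third bullet. Since $(g_{\b m})$ depends only on the translation vector $\b c_2-\b c_1$, of which there are $\bigO1$ distinct values per level, I treat its formation (via the recurrences of Section~\ref{sec:calculate_derivatives} together with the decompression of Section~\ref{sec:la_deriv}) as amortizable precomputation.

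For a PDE with Property~\ref{prop:nice}, Lemma~\ref{lemma:max_prod_nice} supplies an ordering $\bnu$ in which one distinguished coordinate of every $\bnu(s)$, $s\in\b j$, is bounded by $c-1=\bigO1$ while the others are $\bigO p$; hence $\prod_i b_i=\bigO{p^{d-1}}$ and, after padding, $V=\bigO{p^{d-1}}$, so the identical FFT pipeline costs $\bigO{p^{d-1}\log p}$, the first bullet. For the two-dimensional bullet I would instead drop the FFT: in two dimensions $|\b j|=N(p)-N(p-c)=\bigO p$ irrespective of Property~\ref{prop:nice}, so \eqref{eq:plan-m2l} is simply a matrix--vector product with the precomputed $\bigO p\times\bigO p$ correlation matrix $(g_{\bnu(t)+\bnu(s)})_{t,s\in\b j}$, costing $\bigO{p^2}$ and undercutting the $\bigO{p^2\log p}$ that a generic-box FFT would incur.

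The main obstacle is the index bookkeeping that makes the correlation simultaneously correct and cheap. I must verify that every derivative index arising in \eqref{eq:plan-m2l} actually lies in the claimed box — in particular that, under the Lemma~\ref{lemma:max_prod_nice} ordering, the distinguished coordinate of the \emph{sumset} $\bnu(\b j)+\bnu(\b j)$ stays $\le 2(c-1)=\bigO1$, so that $V$ does not secretly gain a factor of $p$ — and that padding the irregular footprint $\bnu(\b j)$ up to a rectangle only introduces zeros and therefore leaves the volume asymptotics unchanged. The remaining care is the standard but easy-to-mishandle FFT accounting: fixing the pad lengths so the transform computes linear cross-correlation, and confirming that reading off the $\bigO{p^{d-1}}$ outputs indexed by $\bnu(\b j)$ costs only $\bigO{p^{d-1}}$ and hence never dominates the transform.
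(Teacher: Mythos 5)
Your proposal is correct and follows essentially the same route as the paper: the same correlation formula $\betal_t=\sum_{s\in\b j}g_{\bnu(t)+\bnu(s)}\betam_s$ (the paper's \eqref{eq:m2l_toeplitz} after absorbing the factorial normalization), the same Toeplitz/circulant FFT acceleration with box sizes controlled by Lemma~\ref{lemma:max_prod_nice} for the Property~\ref{prop:nice} case, the same amortized precomputation of the $\bigO{1}$ distinct derivative arrays per level, and the same direct $\bigO{p^{2d-2}}=\bigO{p^2}$ dense evaluation for the two-dimensional bullet. Your extra check that the distinguished coordinate of the sumset $\bnu(\b j)+\bnu(\b j)$ stays bounded by $2(c-1)$ is a point the paper handles implicitly via its padded FFT sizes $(2M_1+1,\dots,2M_d+1)$, and it goes through.
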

\begin{proof}
To translate a multipole expansion to a local expansion, we need to
calculate the $\bigO{p^{d-1}}$ coefficients in the compressed representation
of the target local expansion.
Let $\b c_1$ be the multipole expansion center and $\b c_2$ be the local
expansion center.
Using \eqref{eq:multipole_eval_beta}, we have
\[
  \sum_{\b y \in S}\greenf(\b x, \b y) \approx \DD{p} G(\b x - \b c_1)^T_{\b j} \betam
   = \sum_{q \in \b j} G^{(\bnu(q))}(\b x - \b c_1) \betam_{q},
\] where $\betam = M^T \alpham$ and
$\alpham_i = \sum_{\b y \in S}(\b c_1 - \b y)^{\bnu(i)}/\bnu(i)!$ for
$i \in \{1, 2, \ldots, N(p)\}$.
Since, according to~\eqref{eq:local}, the coefficients in a local
expansion are derivatives of the potential, we obtain
local coefficients for the potential from a multipole expansion
by taking derivatives of the expansion expression.
The $i$th local expansion coefficient is therefore
\begin{equation}
  \label{eq:m2l_toeplitz}
  \b \gamma_i = \frac{\D x^{\bnu(i)} \greenf(\b x, \b y)
              |_{\b x = \b c_2 } }{\bnu(i)!}
           = \frac{1}{\bnu(i)!} \D x^{\bnu(i)} (\sum_{q \in \b j}
              G^{(\bnu(q))}(\b x - \b c_1) \betam_{q})
              |_{\b x = \b c_2 }
           = \frac{1}{\bnu(i)!} \sum_{q \in \b j}
              G^{(\bnu(q) + \bnu(i))}(\b c_2 - \b c_1) \betam_{q},
\end{equation} where $i \in \b j$.
Let $\b j'$ is a vector of indices of the stored derivatives for order $2p$
similar to $\b j$ for order $p$.
Since $|\bnu(q) + \bnu(i)|\le 2p$ for $q, i \in \b j$,
$G^{(\bnu(q) + \bnu(i))}$ can be computed by
calculating the derivatives $\DD{2p} G(\b x)_{\b j'}$ and then using
the decompression matrix to calculate all the derivatives $\DD{2p} G(\b x)$
Using $w(p)$ as defined in Def.~\ref{def:wp}, the evaluation
of the derivatives numbered by $\b j'$ requires $\bigO{p^{d-1} w(2p)}$
operations, and calculating the remaining ones requires $\bigO{p^d}$
operations (Theorem~\ref{thm:mult_decompression}).
Since the sum has $\bigO{p^{d-1}}$ terms and there are only $\bigO{p^{d-1}}$
coefficients in the target local expansion that need to be computed, the total work
required is $\bigO{p^{2d-2} + \bigO{p^{d}} + p^{d-1} w(2p)}$.

These derivatives can be precomputed for a given collection of
expansion centers, since they depend only on $\b c_2 - \b c_1$.
In a typical FMM, there are only $7^d - 3^d$ different distances per
level for $d$ dimensions. With pre-computation of the derivatives,
this gives us an algorithm with $\bigO{p^{2d-2}}$ operations.

The translation operator in~\eqref{eq:m2l_toeplitz} can be transformed to a
$d$-dimensional convolutional form.
Let $\b \eta = \bnu(\b j_i)_{i=1}^{\# \b j}$,
$\b \gamma'_{\bnu(i)} = \b \gamma_i$ for $i \in \{1,\ldots, N(p)\}$ and
$\betamp_{\bnu(q)} = \betam_q$ for $q \in \b j$.
Re-writing the expression for translated expansion coefficient in the three dimensions
with the individual dimensions made explicit, we obtain
\[
  \b \gamma'_{\eta_1, \eta_2, \eta_3} =
  \frac{1}{\eta_1! \eta_2! \eta_3!}
    \sum_{\zeta_1=0}^{M_1} \sum_{\zeta_2=0}^{M_2} \sum_{\zeta_3=0}^{M_3}
    G^{(\eta_1+\zeta_1, \eta_2+\zeta_2, \eta_3+\zeta_3)}
    (\b c_2 - \b c_1) \betamp'_{-\zeta_1, -\zeta_2, -\zeta_3},
\] where $M_1, M_2, M_3$ are the maximum nonzero indices for each dimension, i.e.
$ M_i = \max_{s \in \b j} \bnu(s)_i$. In addition,
\[
  \betamp'_{-\zeta_1, -\zeta_2, -\zeta_3} =
    \begin{cases}
      \betamp_{\zeta_1, \zeta_2, \zeta_3} & \bnu^{-1}(\zeta_1, \zeta_2, \zeta_3) \in \b j, \\
      0 & \text{otherwise}.
    \end{cases}
\]
Rearranging the summation variables, we obtain
\begin{equation}
  \b \gamma'_{\eta_1, \eta_2, \eta_3} =
  \frac1{\eta_1! \eta_2! \eta_3! }
  \sum_{\zeta_1=-M_1}^{0} \sum_{\zeta_2=-M_2}^{0} \sum_{\zeta_3=-M_3}^{0}
  G^{(\eta_1-\zeta_1, \eta_2-\zeta_2, \eta_3-\zeta_3)}
  (\b c_2 - \b c_1) \betamp'_{\zeta_1, \zeta_2, \zeta_3},
  \label{eq:convolution_m2l}
\end{equation}
revealing the convolutional form of the operator.

This is similar to the convolution-based algorithm for the M2L translation operator for
3D Laplace described in~\citep{greengard1988efficient}.
In a further step, this convolution can be applied by viewing it as
multiplication of the coefficient vector by a Toeplitz matrix. This
multiplication in turn can be realized via a Fast Fourier Transform
through an embedding in a circulant matrix. We refer to~\citep{greengard1988efficient}
for the details, noting that while the reference only discusses
convolutions in one and two dimensions, the extension to 
$d$ dimensions is straightforward (where, in our case, the sizes of the
$d$-dimensional convolution are $(M_1 + 1, \ldots, M_d + 1)$).

A one-dimensional convolution of size $m$ can be represented as
multiplication by a Toeplitz matrix, which in turn
can be embedded in a circulant matrix of size $2m - 1$.
A circulant matrix multiplication can be carried out using an FFT of
length $2m - 1$. Similarly, a $d$-dimensional convolution with sizes
$(M_1 + 1, \ldots, M_d + 1)$ can be carried out using an FFT of sizes
$(2M_1 + 1, \ldots, 2M_d + 1)$ which in turn can be transformed into
a one-dimensional FFT of size $\prod_{i=1}^{d}(2M_d + 1)$.
Since $M_i = \bigO{p}$, the length of the FFT is $\bigO{p^d}$.
Using Lemma~\ref{lemma:max_prod_nice}, the length of the FFT is
$\bigO{p^{d-1}}$ for PDEs satisfying Property~\ref{prop:nice}.

The overall time complexity of the translation is therefore $\bigO{p^{d-1} \log p}$
for PDEs satisfying Property~\ref{prop:nice} and $\bigO{p^{d} \log p}$ for
all other PDEs. In particular, for three-dimensional Laplace, the FFT
is of sizes $(2p+1, 2p+1, 3)$ which leads to a $\bigO{p^2 \log p}$ algorithm.
\end{proof}

\textbf{Cost comparisons with other M2L translation algorithms in the
literature.}
The specialized algorithm by~\citet{greengard1988efficient}
for the Laplace equation requires an FFT of sizes $(2p, 4p+1)$ which
requires approximately one third fewer operations than our
algorithm. As a further comparison with the literature, the algorithm
without the FFT performs $\bigO{p^{2d-2}}=\bigO{p^4}$ floating point
operations for Laplace 3D, which has the same cost as the algorithm
based on tensor contraction by \citet{shanker2007accelerated} and is
an improvement over the algorithm given in
\citep{oppelstrup2013matrix} where common sub-expression elimination
is used to give an (empirically) $\bigO{p^{4.5}}$ algorithm.

\subsubsection{Numerical stability}
\label{sec:numerical_stability}

While the Fast Fourier Transform is numerically stable, having input values that
differ in magnitude has the potential to result in substantial error resulting
from rounding of intermediate quantities.
\citet{greengard1988efficient} provide a partial mitigation to this problem for the
Laplace equation, where scaling the derivatives and the monomials in the Taylor series
reduces the error by reducing the range of the magnitudes of the derivatives.
Rewriting \eqref{eq:m2l_toeplitz} using a scaling parameter $t$, we obtain
\[
  \b \gamma_i = \frac{t^{|\bnu(i)|}}{\bnu(i)!} \sum_{q \in \b j}
    \frac{G^{(\bnu(q)+\bnu(i))}  (\b c_2 - \b c_1)}{t^{|\bnu(q)+\bnu(i)|}} \betam_q t^{|\bnu(q)|}.
\]
This preserves the convolutional form of the operator, and therefore
the multipole-to-local translation algorithm of Section~\ref{sec:m2l}
applies with only minor modifications.

Choosing the scaling parameter depends on the magnitudes of the derivatives
$G^{(\bnu(q)+\bnu(i))}$. For the three-dimensional Laplace equation, \citet{greengard1988efficient} suggests using
$p/r$ where $p$ is the order and $r$ is the distance between the centers
of two neighboring boxes in the tree, akin to an expansion `radius'.
Empirically, the same parameter succeeds in
controlling the magnitudes of entries of the input vector in our algorithm.
For the two-dimensional biharmonic equation on the other hand, we
observed a smaller range of magnitudes in the entries of the input
vector when using $4/r$ compared to $16/r$ for an expansion of order $16$.
While the parameter suggested in \citet{greengard1988efficient}
appears empirically to be a workable choice,
optimal selection of the scaling parameter for general
kernels remains a subject for future research.

\subsection{Local-to-Local Translation}
\label{sec:l2l}
Using the local expansion in~\eqref{eq:local} and replacing the
precomputed coefficient with $C_q$, we get
\[
  \sum_{\b y \in S} \greenf(\b x, \b y) \approx \sum_{i=1}^{N(p)}
    \alpham_i (\b x - \b c)^{\bnu(i)}
\] where $\alpham_i = [M \betam]_i/\bnu(i)!$.
To calculate the local expansion around the new center $\b c_2$, we take derivatives of the
expression above to obtain the coefficients
of the new local expansion. In the local expansion around center $\b c_2$,
the $i$th coefficient is
\begin{align}
  \b \mu_{i} = \frac{\D x^{\bnu(i)} \greenf(\b x, \b y) |_{\b x = \b c_2}}{\bnu(i)!}
    &= \frac{1}{\bnu(i)!} \sum_{q \in \{1, 2, \ldots, N(p)\}} \alphal_q
      \D x^{\bnu(i)} (\b x - \b c_1)^{\bnu(q)} |_{\b x = \b c_2} \nonumber \\
    &= \frac{1}{\bnu(i)!} \sum_{q \in \{1, 2, \ldots, N(p) \}, \bnu(q) \geq \bnu(i) } \alphal_q
      \frac{\bnu(q)!}{(\bnu(q)-\bnu(i))!}(\b c_2 - \b c_1)^{\bnu(q)-\bnu(i)} \nonumber \\
    &= \sum_{q \in \{1, 2, \ldots, N(p) \}, \bnu(q) \geq \bnu(i) } \alphal_q
      \binom{\bnu(q)}{\bnu(i)} (\b c_2 - \b c_1)^{\bnu(q)-\bnu(i)}
    \label{eq:l2l-expr}
\end{align} where $i \in \b j$.
From Theorem~\ref{thm:mult_decompression}, we know that calculating
$\alphal_q$ from $\b\betal$ for $q \in \{1, 2, \ldots, N(p)\}$ requires $\bigO{p^d}$
operations as $\alphal_q = [M\b\betal]_q/\bnu(q)!$.
Evaluating each $\b \mu_{i}$ following the formula above requires $\bigO{p^{d}}$ operations,
resulting in an overall operation count of $\bigO{p^{2d-1}}$ for
the translation in compressed representation.
Fortunately, the amount of work can be reduced to
$\bigO{p^{d}}$ by re-arranging the sums to use common sub-expressions.
To show this time complexity, we need the following two
lemmas.

\begin{lemma}
\label{lem:l2l_fix_one_dim}
  Let $p$ be the order of a local expansion and let
  \[
    S_{\ell, k} = \{\b m: \b m \in \mathcal{M}(p), \b m_{\ell} = k\}
  \] for $\ell\in\{1,\dots, d\}$, $k\in\{0,\dots, p\}$.
  When calculating the coefficients of a local expansion by translating
  a local expansion at a different center, calculating all the
  $\bigO{p^{d-1}}$ coefficients in $S_{\ell, k}$ for a given $\ell$ and $k$
  requires $\bigO{p^d}$ work.
\end{lemma}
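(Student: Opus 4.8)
The plan is to exploit the tensor-product structure of the local-to-local operator in \eqref{eq:l2l-expr}. Writing $\b d = \b c_2 - \b c_1$ and indexing coefficients directly by their multi-index, the factorizations $\binom{\b n}{\b m} = \prod_{t=1}^{d} \binom{n_t}{m_t}$ and $\b d^{\,\b n - \b m} = \prod_{t=1}^{d} d_t^{\,n_t - m_t}$ show that the map $\alphal \mapsto \b\mu$ of \eqref{eq:l2l-expr} factors as a composition of $d$ one-dimensional ``Taylor-shift'' operators, one acting along each coordinate direction. A single such operator sends a fiber $(a_0,\dots,a_L)$ to $(b_0,\dots,b_L)$ with $b_s = \sum_{n\ge s}\binom{n}{s} d_t^{\,n-s} a_n$; computing all $L+1$ outputs by direct summation costs $\bigO{L^2}$, whereas computing a single prescribed output costs only $\bigO{L}$. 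The idea is to single out the fixed direction $\ell$ and carry out its contraction first, for the one required output index $m_\ell = k$ only, so as to collapse the remaining problem to $d-1$ dimensions.

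Concretely, I would first form the intermediate array
\[
  \b\sigma_{\b n'} = \sum_{n_\ell \ge k} \binom{n_\ell}{k}\, d_\ell^{\,n_\ell - k}\, \alphal_{(\b n', n_\ell)},
\]
indexed by the $d-1$ coordinates $\b n'$ other than $\ell$, where $(\b n', n_\ell)$ denotes the reassembled $d$-dimensional multi-index. There are $\bigO{p^{d-1}}$ indices $\b n'$ with $|\b n'| \le p-k$, and each $\b\sigma_{\b n'}$ is a single sum of $\bigO{p}$ terms (one prescribed output per $\ell$-fiber), so forming the entire array costs $\bigO{p^{d}}$. By the coordinate-wise factorization, the sought coefficients are then exactly the output of a $(d-1)$-dimensional instance of \eqref{eq:l2l-expr}, with shift vector $(d_t)_{t\ne\ell}$, applied to $\b\sigma$:
\[
  \b\mu_{(\b m', k)} = \sum_{\b n' \ge \b m'} \Big(\prod_{t\ne\ell}\binom{n'_t}{m'_t}\, d_t^{\,n'_t - m'_t}\Big)\, \b\sigma_{\b n'} \qquad (|\b m'| \le p-k),
\]
and this output set is precisely $\{\b\mu_{\b m}: \b m \in S_{\ell,k}\}$.

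It remains to bound the cost of this $(d-1)$-dimensional translation, which I would evaluate by the same dimension-by-dimension sweep: apply the one-dimensional Taylor-shift operator along each of the $d-1$ remaining directions in turn, computing all outputs at each stage. Sweeping one direction partitions the array into fibers, and the total one-dimensional work for that direction is $\sum_{\text{fibers}} L_f^{2}$, where $L_f + 1$ is the length of fiber $f$. The key estimate is the elementary bound
\[
  \sum_{\text{fibers}} L_f^{2} \;\le\; \Big(\max_f L_f\Big)\sum_{\text{fibers}} L_f .
\]
Here $\max_f L_f = \bigO{p}$, while $\sum_{\text{fibers}} L_f$ is at most the total number of array entries, namely $\bigO{p^{d-1}}$; hence each direction costs $\bigO{p^{d}}$, and since $d-1 = \bigO{1}$ the whole $(d-1)$-dimensional translation costs $\bigO{p^{d}}$. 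Together with the $\bigO{p^{d}}$ cost of forming $\b\sigma$, this gives the claimed bound.

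The main obstacle, and the reason the fixed-coordinate hypothesis is what makes the statement true, is exactly this reduction from a $d$-dimensional to a $(d-1)$-dimensional sweep. The identical argument applied to the full operator gives $\sum_{\text{fibers}} L_f = \bigO{p^{d}}$ per direction and hence $\bigO{p^{d+1}}$; restricting to the slice $S_{\ell,k}$ lets us pay for the $\ell$-direction contraction only once, at the single output $m_\ell = k$, which is what removes the surplus factor of $p$. The one routine point to check is that $\b\sigma$ genuinely realizes a $(d-1)$-dimensional instance of \eqref{eq:l2l-expr}, so that the sweep and its cost estimate transfer verbatim; this is immediate from the factorizations of $\binom{\b n}{\b m}$ and $\b d^{\,\b n-\b m}$ over coordinates.
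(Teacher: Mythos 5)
Your proposal is correct and follows essentially the same route as the paper: contract the fixed coordinate $\ell$ first at the single output index $k$ to produce a $(d-1)$-dimensional intermediate array in $\bigO{p^{d}}$ work, then evaluate the remaining translation by a dimension-by-dimension sweep of one-dimensional Taylor-shift operators, which is exactly the paper's chain of nested subexpressions $\xi^{(1)},\dots,\xi^{(d-1)}$. Your explicit fiber-wise bound $\sum_f L_f^2 \le (\max_f L_f)\sum_f L_f$ merely makes precise the per-stage cost that the paper asserts more informally for general $d$.
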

\begin{proof}
Let $i\in \b j$, $\b \eta = \bnu(i)$, $\b \mu'_{\b \eta} = \b \mu_i$,
$\alphalp_{\bnu(q)} = \alphal_q$ for $q \in \{1, 2, \ldots, N(p)\}$.
Rewriting \eqref{eq:l2l-expr} using these substitutions yields
\[
  \b \mu'_{\b \eta} = \sum_{(\b \eta+\b \zeta) \in \mathcal{M}(p),\b \zeta \ge 0} \alphalp_{\b \zeta+\b \eta}
    \binom{\b \zeta+\b \eta}{\b \eta}(\b c_2 - \b c_1)^{\b \zeta}.
\]
As an illustrative example, consider the two-dimensional case with
$\b \mu'_{\b \eta}$ expanded:
\begin{align*}
  \b \mu'_{\eta_1, \eta_2}
    &= \sum_{\zeta_1=0}^{p - \eta_1} \left(
       \sum_{\zeta_2=0}^{p -\eta_1-\eta_2-\zeta_1}
      \alphalp_{\zeta_1+\eta_1, \zeta_2+\eta_2}
      \binom{\zeta_1+\eta_1}{\eta_1}\binom{\zeta_2+\eta_2}{\eta_2}
      d_1^{\zeta_1} d_2^{\zeta_2}\right) \\
    &= \sum_{\zeta_1=0}^{p - \eta_1}
      \Bigg(\underbrace{
        \sum_{\zeta_2=0}^{p -(\zeta_1+\eta_1)-\eta_2}
        \alphalp_{\zeta_1+\eta_1, \zeta_2+\eta_2}
        \binom{\zeta_2+\eta_2}{\eta_2} d_2^{\zeta_2}
      }_{\xi_{\zeta_1 + \eta_1, \eta_2} }\Bigg)
      d_1^{\zeta_1} \binom{\zeta_1+\eta_1}{\eta_1},
\end{align*}where $(d_1, d_2)^T = \b c_2 - \b c_1$.
Let
\begin{equation}
  \label{eq:l2l-dimbydim-subexpr}
  \xi_{\eta_1, \eta_2} = \sum_{\zeta_2=0}^{p - \eta_1 - \eta_2}
    \alphalp_{\eta_1, \zeta_2+\eta_2}
    \binom{\zeta_2+\eta_2}{\eta_2}  d_2 ^ {\zeta_2}.
\end{equation}
Assume that we are calculating the coefficients for multi-indices in $S_{2, k}$.
Then $\eta_2=k$ is fixed and we vary only $\eta_1$.
Evaluating the $\bigO{p}$ coefficients $(\xi_{\b m})_{\b m \in S_{2, k}}$
requires $\bigO{p^2}$ operations since the sum in \eqref{eq:l2l-dimbydim-subexpr}
has $\bigO{p}$ terms. Finally, once $(\xi_{\b m})_{\b m \in S_{2, k}}$ are calculated,
calculating $(\b \mu'_{\b m})_{\b m \in S_{2, k}}$ also requires $\bigO{p^2}$ operations.
A similar formula (factoring on the first instead of the second dimension) gives an algorithm
of the same asymptotic cost when calculating $(\b \mu'_{\b m})_{\b m \in S_{1, k}}$.

In the general case of $d$ dimensions, an analog of the above computation may use $d - 1$
`nested' definitions $\xi^{(1)}, \xi^{(2)}, \ldots \xi^{(d-1)}$, with each
intermediate quantity having $\bigO{p^{d-1}}$ elements and each entry a sum of
$\bigO{p}$ terms. Overall, the computation requires $\bigO{p^{d}}$ operations in total.
\end{proof}

In the case of the uncompressed representation, all the multi-indices in
$\mathcal M(p)$ can be divided into $p+1$ sets
(which one might think of as ``slices''), i.e.,
\[
  \mathcal{M}(p) = \bigcup\limits_{k=0}^{p} S_{1, k}.
\]
This leads to a translation cost of $\bigO{p^{d+1}}$ operations.
In the compressed case, we subdivide
\[
  \bnu(\b j) = \{\bnu(i): i \in \b j\}
\] into slices. One might expect that $\bigO{1}$ slices need to be computed.
As an illustrative example, consider the PDE
\begin{equation}
  \frac{\partial^2 u}{\partial x \partial y}
  + \frac{\partial u}{\partial x}
  + \frac{\partial u}{\partial y} = 0
  \label{eq:two-slice-example}
\end{equation}
in two dimensions. Using a degree-lexicographic ordering $\bnu$ where the $x$
dimension varies slowest,
\begin{equation*}
  \bnu(\b j) = \{\b m\in \mathcal{M}(p): \b m_1 = 0\text{ or }\b m_2 = 0\}.
\end{equation*}
(To see this, consider that \eqref{eq:two-slice-example} amounts to a
rule that allows rewriting the leading term as a sum of the other two
derivatives, allowing the partial derivatives with multi-indices
$\mathcal M(p) \setminus \bnu(\b j)$ to be computed from those in
$\bnu(\b j)$.)
We can rewrite $\bnu(\b j)$ as
\[
  \bnu(\b j) = S_{2, 0} \cup S_{1, 0}.
\] so that we have a union of two slices with the first slice varying only the
first dimension and the second slice varying only the second dimension.
This leads to a $\bigO{p^{2}}$ algorithm. Lemma~\ref{lem:divide_multi_indices}
generalizes this to any PDE.

\begin{lemma}
\label{lem:divide_multi_indices}
The multi-indices in the compressed representation of a local or multipole expansion of order $p$ 
of a potential obeying any PDE in $d$ dimensions can be divided into $\bigO{1}$ ``slices''
where each multi-index has one component constant across the set.
\end{lemma}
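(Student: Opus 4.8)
The plan is to give an explicit description of the multi-index footprint $\bnu(\bar{\b j})$ of the \emph{unrepresented} coefficients and then to show that its complement $\bnu(\b j)$ inside $\mathcal M(p)$ is a union of boundedly many slices $S_{\ell, k}$. The starting point is Lemma~\ref{lemma:h_eq} together with the remark that $\bnu$ restricts to a numbering of $\mathcal M(p-c)$; combining these gives
\[
  \bnu(\bar{\b j}) = \{\bnu(h(1)) + \bnu(q) : q \in \{1, \ldots, N(p-c)\}\}
    = \{\b m^* + \b n : \b n \in \mathcal M(p-c)\},
\]
where I abbreviate $\b m^* := \bnu(h(1))$. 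First I would observe that $|\b m^*| = c$: since $h(1)$ indexes the last nonzero entry in the first row of $P$ --- that is, the $\bnu$-largest $\b m \in \mathcal M(c)$ with $a_{\b m} \ne 0$ --- and the PDE genuinely has order $c$ (so some degree-$c$ coefficient is nonzero), gradedness of $\bnu$ forces the maximizer to have total degree exactly $c$.

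Next I would convert the additive description of $\bnu(\bar{\b j})$ into a domination condition. For $\b m \in \mathcal M(p)$, writing $\b m = \b m^* + \b n$ with $\b n \ge \b 0$ is possible precisely when $\b m \ge \b m^*$ componentwise, and in that case $|\b n| = |\b m| - c \le p - c$ holds automatically, so $\b n \in \mathcal M(p-c)$. Hence $\bnu(\bar{\b j})$ is the ``upper set'' generated by $\b m^*$,
\[
  \bnu(\bar{\b j}) = \{\b m \in \mathcal M(p) : \b m \ge \b m^*\},
\]
and taking the complement inside $\mathcal M(p)$ yields
\[
  \bnu(\b j) = \{\b m \in \mathcal M(p) : m_\ell < m^*_\ell \text{ for some } \ell \in \{1, \ldots, d\}\}
    = \bigcup_{\ell=1}^d \bigcup_{k=0}^{m^*_\ell - 1} S_{\ell, k}.
\]
Each $S_{\ell, k}$ appearing here is a genuine subset of $\bnu(\b j)$ (any $\b m$ with $m_\ell = k < m^*_\ell$ fails to dominate $\b m^*$), so this is a cover of $\bnu(\b j)$ by slices of the required form. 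The number of slices is $\sum_{\ell=1}^d m^*_\ell = |\b m^*| = c$, a constant independent of $p$, giving the claimed $\bigO{1}$ bound; the same argument applies verbatim to the multipole footprint since it shares the same index set $\b j$.

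I do not anticipate a genuine obstacle here --- once $\bnu(\bar{\b j})$ is recognized as the upper set of a single degree-$c$ multi-index, the slicing is immediate combinatorics. The one point requiring care is that this description holds for \emph{every} graded monomial ordering (with $\b m^*$ depending on the ordering but always of degree $c$), so no special choice of $\bnu$ is needed; this is precisely what lets the statement hold for an arbitrary PDE, in contrast to the ordering-sensitive construction used for Property~\ref{prop:nice} in Lemma~\ref{lemma:max_prod_nice}. A minor bookkeeping remark is that the slices may overlap, so the cover is not a partition, but this is harmless: feeding each of the $\bigO{1}$ slices into Lemma~\ref{lem:l2l_fix_one_dim} computes every coefficient of $\bnu(\b j)$ at $\bigO{p^d}$ cost per slice, hence $\bigO{p^d}$ in total.
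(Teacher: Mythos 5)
Your proposal is correct and follows essentially the same route as the paper: both use Lemma~\ref{lemma:h_eq} to identify $\bnu(\bar{\b j})$ as the upper set generated by $\bnu(h(1))$, take the complement to express $\bnu(\b j)$ as $\bigcup_{\ell}\bigcup_{k<\bnu(h(1))_\ell} S_{\ell,k}$, and count $|\bnu(h(1))|=\bigO{1}$ slices. Your added observation that $|\bnu(h(1))|=c$ exactly is a harmless refinement of the paper's weaker (but sufficient) remark that this quantity depends only on the PDE.
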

\begin{proof}
Let
\[
  \bnu(\bar{\b j}) = \{\bnu(i): i \in \bar{\b j} \}.
\]
Recall from~\eqref{eq:define_bar_j} that
\[
  \bnu(\bar{\b j}) =(\bnu(h(1)), \dots, \bnu(h(N(p-c))))^T.
\]
Let $\b t = \bnu(h(1))$.
Using Lemma~\ref{lemma:h_eq},
\[
  \bnu(\bar{\b j}) = (
    \b t + \bnu(1),
    \b t + \bnu(2),
    \dots,
    \b t + \bnu(N(p-c))
  )^T.
\]
Since $\b j$ is a numbering of $\{1, 2, \ldots, N(p)\} \setminus \bar{\b j}$,
$\bnu(\b j) = \mathcal M(p) \setminus \bnu(\bar{\b j})$.
Therefore
\[
  \bnu(\b j) = \{ \b m : \b m \in \mathcal M(p) \text{ and }
    \text{there does not exist a multi-index $\b r\in\mathcal M (p-c)$ such that
    $\b m = \b t + \b r$} \},
\] which means any multi-index $\b m \in \bnu(\b j)$ has an index $i$ such that
$\b m_i < \b t_i$.
This interpretation of the set $\bnu(\b j)$ can be seen to equal a
union of slices as
\[
  \bnu(\b j) = \bigcup\limits_{i=1}^{d} \left(\bigcup\limits_{s=0}^{\b t_i - 1}
      S_{i, s} \right).
\]
Since $\bnu(\b j)$ is a union of $|\b t| = |\bnu(h(1))|$ sets
(which only depend on the PDE),
$\bnu(\b j)$ and in turn $\b j$ can be divided into $\bigO{1}$ slices of the form
$S_{i, s}$.
\end{proof}
\begin{theorem}
Translating a local expansion in compressed representation around center $\b c_1$
to a local expansion in compressed representation around center $\b c_2$ can be
achieved with $\bigO{p^{d}}$ work.
\label{thm:l2l_cost}
\end{theorem}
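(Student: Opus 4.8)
The plan is to assemble the result directly from the two preceding lemmas together with the decompression bound of Theorem~\ref{thm:mult_decompression}; the theorem is essentially a cost accounting over a pipeline whose individual stages have already been bounded. The starting point is the explicit translation formula \eqref{eq:l2l-expr}, which expresses each target coefficient $\b\mu_i$ as a weighted sum over the \emph{full} set of source coefficients $(\alphal_q)_{q=1}^{N(p)}$. Since the incoming expansion around $\b c_1$ is stored only in compressed form as $\betal$ (with $\bigO{p^{d-1}}$ entries), the first thing I would do is recover the full source coefficients $\alphal_q = [M\betal]_q/\bnu(q)!$ for all $q \in \{1,\dots,N(p)\}$. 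By Theorem~\ref{thm:mult_decompression}, applying $M$ costs $\bigO{p^d}$ operations, and the subsequent division by the $\bnu(q)!$ adds only $\bigO{p^d}$ more (one division per multi-index), so this decompression step stays within budget.

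Next I would observe that the \emph{target} expansion around $\b c_2$ is likewise stored in compressed form, so only the coefficients $(\b\mu_i)_{i\in\b j}$ actually need to be produced. By Lemma~\ref{lem:divide_multi_indices}, the index set $\bnu(\b j)$ decomposes as $\bnu(\b j) = \bigcup_{i=1}^{d}\bigcup_{s=0}^{\b t_i - 1} S_{i,s}$ into a number of slices that is $\bigO{1}$ in the expansion order (the count equals $|\b t| = |\bnu(h(1))| \le c$, which depends only on the PDE). I would then invoke Lemma~\ref{lem:l2l_fix_one_dim}, which guarantees that, given the full source coefficients, all $\bigO{p^{d-1}}$ target coefficients lying in a single slice $S_{\ell,k}$ can be computed in $\bigO{p^d}$ operations via the nested shared-subexpression scheme described there. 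Combining the estimates, the decompression costs $\bigO{p^d}$, and each of the $\bigO{1}$ slices covering $\bnu(\b j)$ costs $\bigO{p^d}$, so the sum over all slices is again $\bigO{p^d}$; adding the two contributions yields the claimed $\bigO{p^d}$ total.

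The hard part is already discharged by the two lemmas, so the main point to get right is the dependency structure rather than any intricate estimate. Specifically, the per-slice algorithm of Lemma~\ref{lem:l2l_fix_one_dim} consumes the entire uncompressed vector $(\alphal_q)_q$ rather than just its stored subset, so the decompression in the first step cannot be skipped; the only genuine check is that this decompression and the accompanying factorial divisions remain $\bigO{p^d}$, which Theorem~\ref{thm:mult_decompression} supplies. It also remains to confirm that the slices produced by Lemma~\ref{lem:divide_multi_indices} exactly cover $\bnu(\b j)$, so that every stored target coefficient is computed and none is recomputed across overlapping slices; this follows directly from the explicit union decomposition of $\bnu(\b j)$ established there.
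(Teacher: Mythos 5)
Your proposal is correct and follows essentially the same route as the paper, whose proof of Theorem~\ref{thm:l2l_cost} is simply the observation that it follows from Lemmas~\ref{lem:l2l_fix_one_dim} and~\ref{lem:divide_multi_indices}; you have merely spelled out the cost accounting (decompression via Theorem~\ref{thm:mult_decompression}, then $\bigO{1}$ slices at $\bigO{p^d}$ each) that the paper leaves implicit. One minor imprecision: the slices in the union decomposition of $\bnu(\b j)$ may overlap (a multi-index can have $\b m_i < \b t_i$ for several $i$), so ``none is recomputed'' is not literally guaranteed, but since there are only $\bigO{1}$ slices this does not affect the asymptotic bound.
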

\begin{proof}
Follows from lemmas~\ref{lem:l2l_fix_one_dim} and~\ref{lem:divide_multi_indices}.
\end{proof}

\subsection{Multipole-to-Multipole Translation}
\label{sec:m2m}
In the uncompressed case, multipole-to-multipole translation using
Taylor series is worked out in some detail below,
using the multi-binomial expansion theorem. For the compressed case, we present a
method for translating a compressed multipole expansion by
assuming that the coefficients of non-stored multi-indices are zero
and translating. This results in an uncompressed representation which
we then re-compress.

Let $\alpham$ be the coefficients of multipole expansion in the uncompressed
representation that is accurate for targets outside a circle with center $\b c_1$ and $R_1$.
Let $T$ be the translation operator that translates the
expansion from center $\b c_1$ to new center $\b c_2$ to obtain a multipole expansion
that is accurate outside the circle with center $\b c_1$ and radius
$R_2 \ge R_1 + \norm{\b c_2 - \b c_1}$.
Let $M$ be the decompression operator and let the multipole-to-multipole
translation coefficients for this uncompressed representation
be given by
\begin{equation}
  \label{eq:define_rho}
  \b \rho = T \alpham.
\end{equation}
Next, let $E$ be the compressed-to-uncompressed embedding operator, defined by
\begin{equation*}
  [E \betam]_i = \begin{cases}
        \betam_i & i \in \b j, \\
        0 & i\notin \b j
        \end{cases}
  \qquad (i\in \{1,\dots, N(p)\}).
\end{equation*}
Recall from \eqref{eq:define_beta_multipole} that the coefficients for the
compressed representation were obtained as $\betam=M^T \alpham$.
Let $\b \sigma=T E\betam$, i.e. the uncompressed translation applied
to the embedded compressed expansion.
Recompressing the result of this translation yields
\begin{equation}
  \label{eq:define_psi}
  \b \psi = M^T \b \sigma =  M^T T E \betam = M^T T E M^T \alpham.
\end{equation}
Compression and translation do not necessarily commute, i.e.
$\b \rho \ne \b \sigma$ in general. To estimate the impact
of this error, we consider the pointwise difference of the two expansions
\begin{align*}
  \epsilon(\b x) = \DD{p} G(\b x - \b c_2)^T_{\b j} \b \psi -
    \DD{p} G(\b x - \b c_2)^T \b \rho =
    \DD{p} G(\b x - \b c_2)^T (T E M^T \alpham - T \alpham),
\end{align*}
which is not necessarily zero.
\begin{figure}
\centering
\begin{subfigure}[t]{.3\linewidth}
\begin{tikzpicture}[scale=0.7]
\foreach \i in {0.25,0.5,...,4.0}{
  \foreach \j in {0,0.25,...,\i} {
    \draw (\i-\j,\j) node[fill=black!25,circle,scale=0.1] {};
  }
}
\draw (0,0) node[fill=black!25,circle,scale=0.1] {};
\draw[thick,opacity=0.5] (0,0)
  -- (4,0)
  -- (0,4)
  -- cycle;
\draw (1,1.5) node[anchor=center,fill=black,circle,scale=0.4,label=below:$\alphamp_{\b k}$] {};
\draw[draw opacity=0.5,->] (0,0)--(4.75,0) node[right]{$\eta_1$};
\draw[draw opacity=0.5,->] (0,0)--(0,4.75) node[above]{$\eta_2$};
\draw[->] (2.5,3) node[right] {$\alphamp_{\mathcal{M}(p)}$} -- (1.5, 2.5);
\draw (3.5,2) -- (5.5,2) -- (5.5, 1.825) -- (5.75, 2.125) --
      (5.5, 2.375) -- (5.5,2.25) -- (3.5,2.25) -- (3.5,2);
\node at (4.5, 1.7) {\small Compress};
\end{tikzpicture}
\caption{
  Index space showing the coefficients in the uncompressed representation
  $\alphamp$ around center $\b c_1$.}
\end{subfigure}\hspace{3mm}
\begin{subfigure}[t]{.3\linewidth}
\begin{tikzpicture}[scale=0.7]
\foreach  [evaluate = \i as \k using 4 - \i] \i in {0.25,0.5,...,4.0}{
  \foreach \j in {0,0.25,...,\i} {
    \draw (\i-\j,\j) node[fill=black!25,circle,scale=0.1] {};
  }
}
\draw (0,0) node[fill=black!25,circle,scale=0.1] {};
\draw[draw opacity=0.5,->] (0,0)--(4.75,0) node[right]{$\eta_1$};
\draw[draw opacity=0.5,->] (0,0)--(0,4.75) node[above]{$\eta_2$};
\draw[thick,opacity=0.5] (0,0)
  -- (4,0)
  -- (0,4)
  -- cycle;
\draw[fill=black!50] (0,1)
  -- (3,1);
\draw[dashed,fill=black!25,opacity=0.5] (1,1.5)
  -- (2.25, 0.25);
\draw[->] (2.5,3) node[right] {$\betamp_{\bnu(\bar{\b j})} = 0$} -- (1.5, 2.5);
\draw[->] (4.5,1) node[right] {$\betamp_{\bnu(\b j)}$} -- (3.5, 0.5);
\draw [dashed,fill=gray] (1,1.5)
  node[fill=black!25,circle,anchor=center,scale=0.4,
       label={[text=black!25]west:$\betamp_{\b k}$}]{};
\draw [dashed,fill=gray] (1.75,0.75) node[fill=black,circle,anchor=center,scale=0.4,label=west:$\betamp_{\b s^1}$]{};
\draw [dashed,fill=gray] (2.25,0.25) node[fill=black,circle,anchor=center,scale=0.4,label=west:$\betamp_{\b s^0}$]{};
\end{tikzpicture}
\caption{Index space showing the coefficients in the compressed representation
$\betam= M^T \alpham$ around center $\b c_1$ for a PDE with only terms of
equal order, with $\betamp_{\b s^0}$ and $\betamp_{\b s^1}$ resulting from
compression of $\alphamp_{\b k}$ where $|\b s^0| = |\b k|$ and
$|\b s^1| = |\b k|$.}
\end{subfigure}\hspace{3mm}
\begin{subfigure}[t]{.3\linewidth}
\begin{tikzpicture}[scale=0.7]
\foreach  [evaluate = \i as \k using 4 - \i] \i in {0.25,0.5,...,4.0}{
  \foreach \j in {0,0.25,...,\i} {
    \draw (\i-\j,\j) node[fill=black!25,circle,scale=0.1] {};
  }
}
\draw[draw opacity=0.5,->] (0,0)--(4.75,0) node[right]{$\eta_1$};
\draw[draw opacity=0.5,->] (0,0)--(0,4.75) node[above]{$\eta_2$};
\draw[thick,opacity=0.5] (0,0)
  -- (4,0)
  -- (0,4)
  -- cycle;
\draw[fill=black!50] (0,1)
  -- (3,1);
\draw[dashed,fill=black!25,opacity=0.5] (1,1.5)
  -- (2.25, 0.25);
\draw[->] (2.5,3) node[right] {$\betamp_{\bnu(\bar{\b j})} = 0$} -- (1.5, 2.5);
\draw[->] (4.5,1) node[right] {$\betamp_{\bnu(\b j)}$} -- (3.5, 0.5);
\draw [dashed,fill=gray] (1,1.5)
  node[fill=black!25,circle,anchor=center,scale=0.4,
       label={[text=black!25]west:$\betamp_{\b k}$}]{};
\draw [dashed,fill=gray] (1.25,0.75) node[fill=black,circle,anchor=center,scale=0.4,label=west:$\betamp_{\b s^1}$]{};
\draw [dashed,fill=gray] (2.25,0.25) node[fill=black,circle,anchor=center,scale=0.4,label=west:$\betamp_{\b s^0}$]{};
\end{tikzpicture}
\caption{Index space showing the coefficients in the compressed representation
$\betam= M^T \alpham$ around center $\b c_1$ for a PDE with varying order
terms with $\betamp_{\b s^0}$ and $\betamp_{\b s^1}$ resulting from
compression of $\alphamp_{\b k}$ where $|\b s^0| = |\b k|$ and
$|\b s^1| < |\b k|$.}
\end{subfigure}
\begin{subfigure}[t]{.3\linewidth}
\begin{tikzpicture}[scale=0.7]
\foreach  [evaluate = \i as \k using 4 - \i] \i in {0.25,0.5,...,4.0}{
  \foreach \j in {0,0.25,...,\i} {
    \draw (\i-\j,\j) node[fill=black!25,circle,scale=0.1] {};
  }
}
\draw[thick,opacity=0.5] (0,0)
  -- (4,0)
  -- (0,4)
  -- cycle;
\draw[draw opacity=0.5,->] (0,0)--(4.75,0) node[right]{$\eta_1$};
\draw[draw opacity=0.5,->] (0,0)--(0,4.75) node[above]{$\eta_2$};
\draw [dashed,fill=black!25,opacity=0.5] (1,1.5)
  -- (1,3)
  -- (2.5,1.5)
  -- cycle;
\draw (1,1.5) node[anchor=center,fill=black,circle,scale=0.4,label=below:$\alphamp_{\b k}$] {};
\draw[->] (2.5,2.5) node[right] {$\b \rho$} -- (1.5, 2);
\draw (2.5, 7) -- (2.5, 5) -- (2.375, 5) -- (2.625, 4.75) --
      (2.875, 5) -- (2.75, 5) -- (2.75, 7) -- (2.5, 7);
\node at (3.85, 6) {\small Translate};
\draw (3.5,2.5) -- (5.5,2.5) -- (5.5, 2.375) -- (5.75, 2.625) --
      (5.5, 2.875) -- (5.5,2.75) -- (3.5,2.75) -- (3.5,2.5);
\node at (4.5, 2.2) {\small Compress};
\end{tikzpicture}
\caption{Index space showing the translated coefficients in the uncompressed
representation $\b \rho=T \alpham$ around center $\b c_2$ assuming that all
coefficients $\alphamp$ are zero except $\alphamp_{\b k}$.}
\end{subfigure}\hspace{3mm}
\begin{subfigure}[t]{.3\linewidth}
\begin{tikzpicture}[scale=0.7]
\foreach  [evaluate = \i as \k using 4 - \i] \i in {0.25,0.5,...,4.0}{
  \foreach \j in {0,0.25,...,\i} {
    \draw (\i-\j,\j) node[fill=black!25,circle,scale=0.1] {};
  }
}
\draw[fill=black!25,opacity=0.25] (0,1) -- (3,1);
\draw[draw opacity=0.5,->] (0,0)--(4.75,0) node[right]{$\eta_1$};
\draw[draw opacity=0.5,->] (0,0)--(0,4.75) node[above]{$\eta_2$};
\draw[thick,opacity=0.5] (0,0)
  -- (4,0)
  -- (0,4)
  -- cycle;
\draw [dashed,fill=black!25,opacity=0.5] (2.25,0.25)
  -- +(1.5,0)
  -- +(0,1.5)
  -- cycle;
\draw [dashed,fill=black!25,opacity=0.5] (1.75,0.75)
  -- +(1.5,0)
  -- +(0,1.5)
  -- cycle;
\draw [dashed] (2.25,0.25)
  -- +(1.5,0)
  -- +(0,1.5)
  -- cycle;
\draw [dashed,fill=gray] (1.75,0.75) node[fill=black,circle,anchor=center,scale=0.4,label=west:$\betamp_{\b s^1}$]{};
\draw [dashed,fill=gray] (2.25,0.25) node[fill=black,circle,anchor=center,scale=0.4,label=west:$\betamp_{\b s^0}$]{};
\draw[->] (3,2.0) node[right] {$\b \chi^{\b s^1}$} -- (2, 1.5);
\draw[->] (4,1) node[right] {$\b \chi^{\b s^0}$} -- (3, 0.5);
\draw (1.5, 7) -- (1.5, 5) -- (1.375, 5) -- (1.625, 4.75) --
      (1.875, 5) -- (1.75, 5) -- (1.75, 7) -- (1.5, 7);
\node at (3.85, 6) {\small Translate (no error)};
\end{tikzpicture}
\caption{Index space showing the translated coefficients in the compressed
representation $\b \sigma = T \betam$ around center $\b c_2$, with
$\b \chi^{\b s^0}$ and $\b \chi^{\b s^1}$ components resulting from the compressed
coefficients $\betamp_{\b s^0}$ and $\betamp_{\b s^1}$.}
\end{subfigure}\hspace{3mm}
\begin{subfigure}[t]{.3\linewidth}
\begin{tikzpicture}[scale=0.7]
\foreach  [evaluate = \i as \k using 4 - \i] \i in {0.25,0.5,...,4.0}{
  \foreach \j in {0,0.25,...,\i} {
    \draw (\i-\j,\j) node[fill=black!25,circle,scale=0.1] {};
  }
}
\draw[fill=black!25,opacity=0.25] (0,1) -- (3,1);
\draw[draw opacity=0.5,->] (0,0)--(4.75,0) node[right]{$\eta_1$};
\draw[draw opacity=0.5,->] (0,0)--(0,4.75) node[above]{$\eta_2$};
\draw[thick,opacity=0.5] (0,0)
  -- (4,0)
  -- (0,4)
  -- cycle;
\draw [dashed,fill=black!25,opacity=0.5] (2.25,0.25)
  -- +(1.5,0)
  -- +(0,1.5)
  -- cycle;
\draw [dashed,fill=black!25,opacity=0.5] (1.25,0.75)
  -- +(1.5,0)
  -- +(0,1.5)
  -- cycle;
\draw [dashed,fill=black!45,opacity=0.5] (2.75,0.75)
  -- ++(0.5,0)
  -- ++(-2,2)
  -- ++(0,-0.5)
  -- cycle;
\draw [dashed] (2.25,0.25)
  -- +(1.5,0)
  -- +(0,1.5)
  -- cycle;
\draw [dashed,fill=gray] (1.25,0.75) node[fill=black,circle,anchor=center,scale=0.4,label=west:$\betamp_{\b s^1}$]{};
\draw [dashed,fill=gray] (2.25,0.25) node[fill=black,circle,anchor=center,scale=0.4,label=west:$\betamp_{\b s^0}$]{};
\draw[->] (2.75,2.5) node[right] {$\b \epsilon(\b x)$} -- (1.75, 2);
\draw[->] (3,1.75) node[right] {$\b \chi^{\b s^1}$} -- (2, 1.25);
\draw[->] (3,1.75) -- (2.5, 1.25);
\draw[->] (4,1) node[right] {$\b \chi^{\b s^0}$} -- (3, 0.5);
\draw (1.25, 7) -- (1.25, 5) -- (1.125, 5) -- (1.375, 4.75) --
      (1.625, 5) -- (1.5, 5) -- (1.5, 7) -- (1.25, 7);
\node at (3.85, 6) {\small Translate (with error)};
\end{tikzpicture}
\caption{Index space showing the translated coefficients in the compressed
representation $\b \sigma = T \betam$ around center $\b c_2$, with
$\b \chi^{\b s^0}$ and $\b \chi^{\b s^1}$ components resulting from the compressed
coefficients $\betamp_{\b s^0}$ and $\betamp^{\b s^1}$.}
\end{subfigure}
\caption{Multipole-to-multipole translation for uncompressed representation and compressed representation}
\label{fig:m2m_diagrams}

\end{figure}
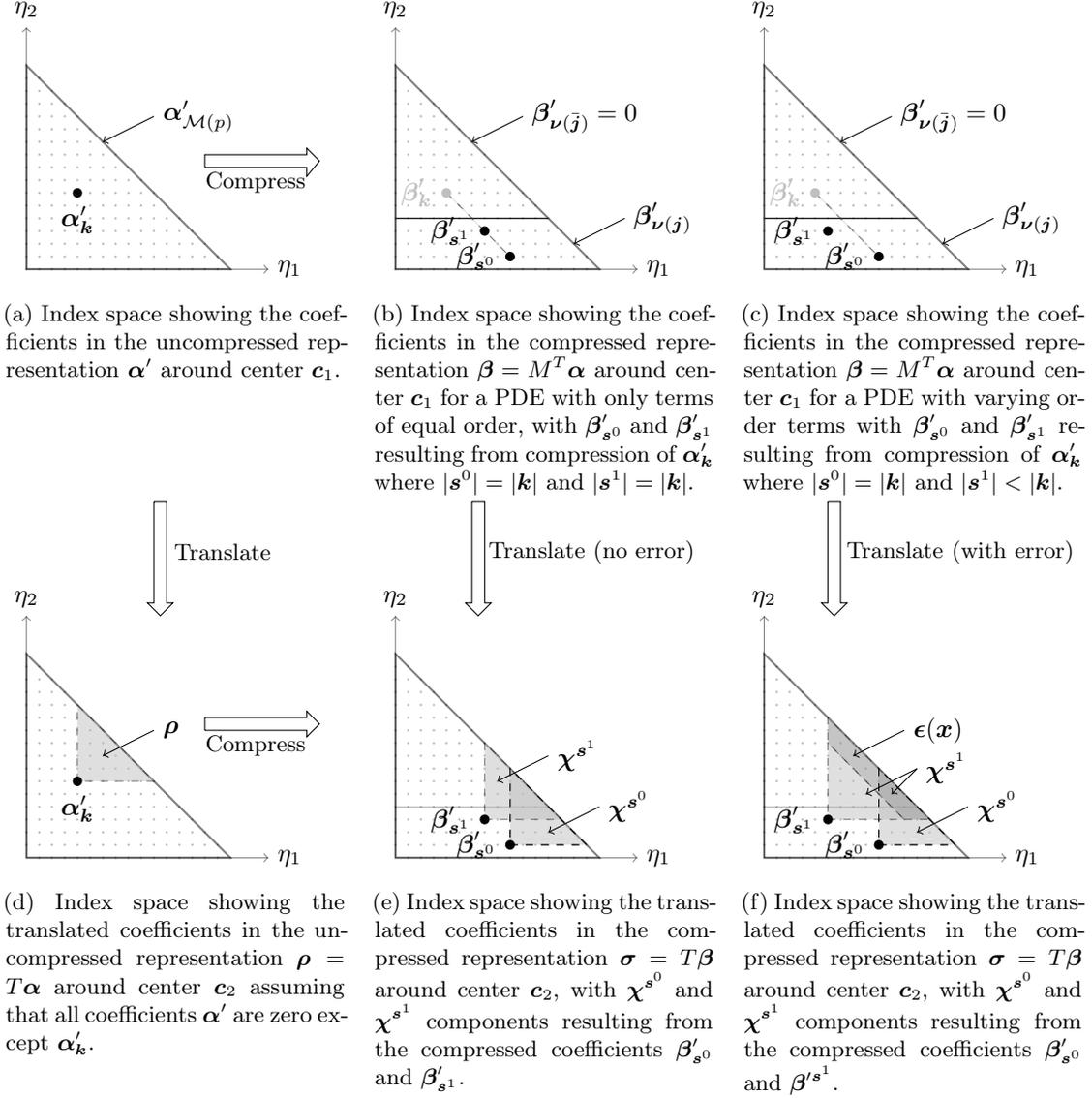

Figure~\ref{fig:m2m_diagrams} shows a two-dimensional example in
schematic form, showing the state of the expansion on a path through
compression and translation. In the figure, each lattice point
represent a coefficient in $\mathcal M(p)$. Assuming only one
coefficient is nonzero in the expansion around the first center as in
Figure~\ref{fig:m2m_diagrams}(a), the nonzero coefficients of the
translated expansion around the new center are shown in
Figure~\ref{fig:m2m_diagrams}(d). If one were to compress first,
however, two cases arise, shown as the second and third column
respectively. The second column shows the more straightforward case of
a PDE with terms that are all of the same order (e.g. Laplace). In
this case, compress-translate-compress incurs no error. In that case,
the single coefficient from panel~(a) may become one or more nonzero
coefficients as shown in in Figure~\ref{fig:m2m_diagrams}(b). After
translation, each of those coefficients spawn a `triangle' of
coefficients in the translated expansion, shown in
Figure~\ref{fig:m2m_diagrams}(e). In the case of a PDE with terms not
all of the same order (e.g. Helmholtz), cf.
Figure~\ref{fig:m2m_diagrams}(c), error is incurred,
shown schematically in Figure~\ref{fig:m2m_diagrams}(f).

The following lemma will be useful as a technical tool in the discussion that
follows. It states that each derivative in $\partial_{\b x}^{\b k} G(\b x)$ can
be linearly combined from the derivatives in the subset
$\DD{p} G(\b x)_{\b j}$ with order less than or equal to $|\b k|$.
\begin{lemma}
\label{lemma:deriv_less_order}
For $\b k \in \mathcal M(p)$, there exist coefficients
$(v_{\b k, \b s})_{\b s\in \bnu(\b j), |\b s|\le |\b k|} \subset \mathbb C$ so that
\begin{equation}
  \label{eq:deriv-representation}
  \D{x}^{\b k} G(\b x) =
    \sum_{\b s \in \bnu(\b j), |\b s|\le |\b k|}
      v_{\b k, \b s} \D{x}^{\b s} G(\b x).
\end{equation}
When $G$ satisfies a PDE with only terms of equal order, then there exist coefficients
$(v_{\b k, \b s})_{\b s\in \bnu(\b j), |\b s|= |\b k|} \subset \mathbb C$ such that
\begin{equation}
  \label{eq:deriv-representation-equal}
  \D{x}^{\b k} G(\b x) =
    \sum_{\b s \in \bnu(\b j), |\b s| = |\b k|}
      v_{\b k, \b s} \D{x}^{\b s} G(\b x).
\end{equation}
\end{lemma}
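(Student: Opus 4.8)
The plan is to prove the general identity \eqref{eq:deriv-representation} by strong induction on the position $\bnu^{-1}(\b k)$ of $\b k$ in the monomial ordering, and to observe that when the PDE has only equal-order terms the same induction with a sharpened hypothesis yields \eqref{eq:deriv-representation-equal}. First I would dispose of the case $\b k \in \bnu(\b j)$: here the claim holds trivially with the single term $\b s = \b k$, for which $|\b s| = |\b k|$, so both statements are immediate. This also settles the base case $\bnu^{-1}(\b k) = 1$, i.e. $\b k = \b 0$: since the operator has order $c \ge 1$, the grading (property~2 of Definition~\ref{def:bnu}) forces the maximal-index nonzero-coefficient multi-index $\b t := \bnu(h(1))$ to satisfy $|\b t| = c \ge 1$, hence $\b t \ne \b 0$ and $\b 0 \in \bnu(\b j)$.

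For the inductive step, take $\b k \in \bnu(\bar{\b j})$. Using $\bnu(\bar{\b j}) = \b t + \mathcal M(p-c)$ (from Lemma~\ref{lemma:h_eq}, as already used in the proof of Lemma~\ref{lem:divide_multi_indices}), write $\b k = \b t + \b r$ with $\b r \in \mathcal M(p-c)$. Applying $\D{x}^{\b r}$ to $\mathcal L G = 0$ produces the relation
\[
  \sum_{\b \xi \in \mathcal M(c)} a_{\b \xi}\, \D{x}^{\b r + \b \xi} G(\b x) = 0,
\]
whose highest-indexed term is, by the definition of $h$, the one at $\b \xi = \b t$, namely $a_{\b t}\, \D{x}^{\b k} G$ with $a_{\b t} \ne 0$. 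Solving for $\D{x}^{\b k} G$ expresses it as a linear combination of the $\D{x}^{\b r + \b \xi} G$ over the remaining $\b \xi \ne \b t$ with $a_{\b \xi} \ne 0$. By \eqref{eq:numbering-ineq-iff}, each surviving term obeys $\bnu^{-1}(\b r + \b \xi) < \bnu^{-1}(\b r + \b t) = \bnu^{-1}(\b k)$, so the inductive hypothesis applies to each and supplies a representation in terms of stored derivatives; termination follows because every rewrite strictly lowers $\bnu^{-1}(\cdot)$, which is bounded below.

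The crux — and the step I expect to require the most care — is the bookkeeping on total order that certifies $|\b s| \le |\b k|$. The key observation is that the grading of $\bnu$ forces $|\b \xi| \le |\b t|$ for every nonzero coefficient: were some $|\b \xi| > |\b t|$, then property~2 would give $\bnu^{-1}(\b \xi) > \bnu^{-1}(\b t)$, contradicting the maximality of $\b t = \bnu(h(1))$ among nonzero-coefficient indices in $\mathcal M(c)$. Hence each surviving term has order $|\b r + \b \xi| = |\b k| - |\b t| + |\b \xi| \le |\b k|$, and the inductive hypothesis furnishes stored derivatives of order at most $|\b r + \b \xi| \le |\b k|$, establishing \eqref{eq:deriv-representation}. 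In the equal-order case every nonzero $\b \xi$ has $|\b \xi| = c = |\b t|$, so $|\b r + \b \xi| = |\b k|$ exactly; feeding this into the strengthened hypothesis keeps every stored index at order precisely $|\b k|$, giving \eqref{eq:deriv-representation-equal}.
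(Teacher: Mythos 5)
Your proposal is correct and follows essentially the same route as the paper: a strong induction along the graded ordering, using the row $\D{x}^{\b r}\mathcal L G=0$ of $P\DD{p}G=\b 0$ (with $\b k=\bnu(h(1))+\b r$ identified via Lemma~\ref{lemma:h_eq}) whose last nonzero entry is the $\b k$-term, then solving for $\D{x}^{\b k}G$ and recursing. The only cosmetic difference is the order bookkeeping — you bound $|\b\xi|\le|\bnu(h(1))|$ directly, while the paper invokes the grading property to get $|\bnu(\ell)|\le|\bnu(n+1)|$ for the lower-indexed terms — and these are equivalent.
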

\begin{proof}
  We use strong induction on $i\in\{1, 2, \ldots, N(p)\}$.

  \emph{Induction hypothesis $H(i)$:} There exist $(v_{\b k, \b s})$ so that
  \eqref{eq:deriv-representation} is true for $\b k = \bnu(i)$.

  \emph{Base case $H(1)$:} For $i=1$, we have $\b k = \bnu(1)=\b 0$.
  $\b 0$ is in $\bnu(\b j)$, allowing a trivial representation in
  \eqref{eq:deriv-representation}.

  \emph{Induction step $H(1)\land \cdots \land H(n)\implies H(n+1)$:} Let the hypothesis be true for
  $i=1,\dots, n$.
  If $n + 1 \in \b j$, a trivial representation satisfies \eqref{eq:deriv-representation}.
  Suppose that $n + 1 \in \bar{\b j}$.
  By definition of $\bar{\b j}$,
  there exists a $w\in \{1,\dots,N(p)\}$ such that $h(w) = n + 1$.
  Taking the $w$th row of
  \[
    P \DD{p} G(\b x) = 0
  \] we find
  \[
    P_{[w, :]}^T \DD{p} G(\b x) = 0.
  \]
  Restated using the definition of $P$ from
  \eqref{eq:define_P}, this is equivalent to
  \[
    \sum_{\ell=1}^{N(p)}
      b_{\bnu(w), \bnu(\ell)} \D x^{\bnu(\ell)}G(\b x) = 0.
  \]
  We know that the last nonzero term in the inner product above
  is in the $(n+1)$st term, and therefore the order-$\bnu(n+1)$
  derivative of $G$ can be written as a linear combination
  of derivatives $\ell \in \{1, \ldots, n\}$:
  \[
    \D{x}^{\bnu(n + 1)}G(\b x) = -\frac{1}{b_{\bnu(w), \bnu(n+1)}}
      \sum_{\ell=1}^{n} b_{\bnu(w), \bnu(\ell)} \D{x}^{\bnu(\ell)} G(\b x).
  \]
  By the induction hypothesis,
  $\D{x}^{\bnu(\ell)} G(\b x)$ for $\ell \in \{1, \ldots, n\}$
  are linear combinations of derivatives in  $\DD{p} G(\b x)_{\b j}$
  with order less than or equal to $|\bnu(\ell)|$.
  Since $|\bnu(\ell)|\le |\bnu(n + 1)|$ for all $\ell \in \{1, \ldots, n\}$,
  the order-$\bnu(n+1)$ derivative is also a linear combination of
  derivatives in $\DD{p} G(\b x)_{\b j}$ with order less than or equal to
  $|\bnu(n + 1)|$.

  In the case of a PDE with only terms of equal order, we use the observation that
  \[
    b_{\bnu(w), \bnu(l)} = 0
  \] for $|\bnu(l)| \ne \bnu(n+1)$, and the proof is analogous to the above.
\end{proof}
Let
\[
  \alpham_q = \frac{(\b c_1 - \b y)^{\bnu(q)}}{\bnu(q)!}
  \qquad (q\in \{1, 2,\ldots, N(p)\})
\]
be the multipole coefficients around a center $\b c_1 \in \mathbb{R}^n$.
Then the multipole expansion around center $\b c_1$ evaluated at
target point $\b x$ with uncompressed representation is
\begin{equation}
  \label{eq:full_mpole_eval}
  \sum_{q=1}^{N(p)} \alpham_q \D x^{\bnu(q)} G(\b x - \b c_1)
\end{equation}
for a target point $\b x \in \mathbb{R}^d$.
From Section~\ref{sec:m2p}, we can compress this expansion to get
\begin{align*}
  \sum_{q=1}^{N(p)} \alpham_q \D x^{\bnu(q)} G(\b x - \b c_1)
  &= \sum_{q \in \b j} \betam_q \D x^{\bnu(q)} G(\b x - \b c_1) \\
  &= \sum_{q=1}^{N(p)} [E \betam]_q \D x^{\bnu(q)} G(\b x - \b c_1).
\end{align*}
Let $\b x \in \mathbb{R}^d$ be a target point, $\b y \in \mathbb{R}^d$
be a source point, and
\[
  \b \rho_q = \frac{(\b c_2 - \b y)^{\bnu(q)}}{\bnu(q)!}
    \qquad (q\in \{1, 2,\ldots, N(p)\})
\]
be the multipole coefficients around the new center $\b c_2 \in \mathbb{R}^d$.
Similar to \eqref{eq:full_mpole_eval}, the multipole expansion
around center $\b c_1$ evaluated at target point $\b x$ is
\begin{equation}
  \label{eq:mpole_eval_translated_full}
  \sum_{i=1}^{N(p)} \rho_i \D x^{\bnu(i)} G(\b x - \b c_2).
\end{equation}
Using the multi-binomial theorem, we obtain 
that the coefficients of a translated expansion,
for $q \in \{1, \ldots, N(p)\}$,
\begin{align*}
  \b \rho_q = \frac{(\b c_2 - \b y)^{\bnu(q)}}{\bnu(q)!}
    &= \frac{1}{\bnu(q)!} \sum_{i \in \{1, \ldots, N(p)\}, \bnu(i) \le \bnu(q)}
      \binom{\bnu(q)}{\bnu(i)} (\b c_2 - \b c_1) ^ {\bnu(q)-\bnu(i)} (\b c_1 - \b y)^{\bnu(q)} \\
    &= \frac{1}{\bnu(q)!} \sum_{i \in \{1, \ldots, N(p)\}, \bnu(i) \le \bnu(q)}
      \binom{\bnu(q)}{\bnu(i)} \bnu(i)! \b h ^ {\bnu(q)-\bnu(i)}
      \frac{(\b c_1 - \b y)^{\bnu(q)}}{\bnu(q)!}
\end{align*}
where $\b h = \b c_2 - \b c_1$. This yields for $q \in \{1, \ldots, N(p)\}$
\begin{equation}
  \label{eq:mpole_translated_full_coeffs}
  \b \rho_q = \sum_{i \in \{1, \ldots, N(p)\}, \bnu(i) \le \bnu(q)}
    \frac{\b h ^ {\bnu(q)-\bnu(i)}}{(\bnu(q)-\bnu(i))!} \alpham_{i}.
\end{equation}
We use this same translation formula for compressed representation
with $\b \rho$ replaced by $\b \sigma$ and $\alpham$
replaced by $E \betam$, i.e. for $q \in \{1, \ldots, N(p)\}$
\begin{equation}
  \label{eq:mpole_translated_compressed_coeffs}
  \b \sigma_q = \sum_{i \in \{1, \ldots, N(p)\}, \bnu(i) \le \bnu(q)}
    \frac{\b h ^ {\bnu(q)-\bnu(i)}}{(\bnu(q)-\bnu(i))!} [E \betam]_{i}.
\end{equation}
Observe that the expansion with the coefficients $\b \sigma$ is not compressed and
can be compressed as before to obtain the compressed multipole coefficients for
the new center $\b \psi = M^T \b \sigma$.

The remainder of this section is divided into three subsections.
In Subsection~\ref{section:m2m_equal_order}, we show that, in the case of
PDEs with only terms of equal order (such as Laplace), the two
expansions evaluate to the same value.
In Subsection~\ref{section:m2m_varying_order}, we show that, using the numbering
$\b j$ as described in Theorem~\ref{thm:mult_decompression}, the error in the
multipole-to-multipole translation using compressed representation is
asymptotically identical to the truncation error in the original (Taylor) multipole expansion
of the potential function.
Finally, in Subsection~\ref{section:m2m_compute_faster}, we give an algorithm
to perform this computation with low asymptotic cost.

\subsubsection{Compressed Expansions for PDEs with only Terms of Equal Order}
\label{section:m2m_equal_order}

\begin{theorem}
\label{thm:truncation_equal}
For a constant-coefficient linear PDE with only terms of equal order,
evaluating the multipole expansion resulting from a $p$th order
multipole-to-multipole translation with coefficients in compressed representation
is equal to evaluating using a $p$th order multipole expansion, i.e.
\[
  \sum_{n=1}^{N(p)} \b \sigma_{n} \D x^{\bnu(n)} G(\b x - \b c_2)
    = \sum_{n=1}^{N(p)} \b \rho_{n} \D x^{\bnu(n)} G(\b x - \b c_2)
\] where
\begin{align*}
  \b \rho_n &= \frac{(\b c_2 - \b y)^{\bnu(n)}}{\bnu(n)!},\\
  \b \sigma_n &= \sum_{i \in \{1, \ldots, N(p)\}, \bnu(i) \le \bnu(n)}
    \frac{\b h ^ {\bnu(n)-\bnu(i)}}{(\bnu(n)-\bnu(i))!} [E \betam]_{i},
\end{align*}
$\b h = \b c_2 - \b c_1$, and $\betam$ are coefficients
of a `source' multipole expansion around center $\b c_1$
in compressed representation.
\end{theorem}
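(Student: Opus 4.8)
The plan is to show the two evaluations agree as functions of $\b x$ by reducing their difference to a single expression that vanishes precisely because $\mathcal L$ is homogeneous of order $c$. Writing the evaluation of any full coefficient vector as a bilinear pairing with the derivative vector, $\sum_{n}\b\rho_n\D x^{\bnu(n)}G(\b x-\b c_2)=\b\rho^T\DD p G(\b x-\b c_2)$, the pointwise difference between the compressed and uncompressed expansions is $(\b\sigma-\b\rho)^T\DD p G(\b x-\b c_2)$. Since $\b\sigma=TE\betam$ and $\b\rho=T\alpham$ with $\betam=M^T\alpham$, this equals $(T\b\delta)^T\DD p G(\b x-\b c_2)=\b\delta^T T^T\DD p G(\b x-\b c_2)$, where $\b\delta:=EM^T\alpham-\alpham$. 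First I would record two algebraic identities for the compression operators: $M^T E=I$ (immediate from $M_{[\b j,:]}=I$) and $PM=\b 0$ (immediate from the block form of $M$, since $P_{[:,\b j]}-P_{[:,\bar{\b j}]}P_{[:,\bar{\b j}]}^{-1}P_{[:,\b j]}=\b 0$). The first gives $M^T\b\delta=\b 0$. The second, together with a dimension count — $\operatorname{range}(M)$ and $\ker(P)$ both have dimension $|\b j|$ because $M$ has full column rank and $P_{[:,\bar{\b j}]}$ is invertible (Lemma~\ref{lemma:lower_triangular}) — yields $\operatorname{range}(M)=\ker(P)$, hence by bilinear duality $\ker(M^T)=\operatorname{range}(P^T)$. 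Thus $\b\delta=P^T\b u$ for some $\b u$, and it suffices to prove $P\,T^T\DD p G(\b x-\b c_2)=\b 0$.

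The crux is to evaluate $T^T\DD p G(\b x-\b c_2)$ and apply $P$. Transposing the translation formula~\eqref{eq:mpole_translated_full_coeffs}, its $i$th entry is the truncated Taylor shift $\sum_{\b m\ge\b 0,\,|\b m|\le p-|\bnu(i)|}(\b h^{\b m}/\b m!)\,\D x^{\bnu(i)+\b m}G(\b x-\b c_2)$ with $\b h=\b c_2-\b c_1$. Applying row $i_1$ of $P$ and using~\eqref{eq:coeff-shift} to note that the only nonzero entries $b_{\bnu(i_1),\bnu(i_2)}=a_{\bnu(i_2)-\bnu(i_1)}$ occur at $\bnu(i_2)=\bnu(i_1)+\b\xi$ with $\b\xi\in\mathcal M(c)$, I would invoke the equal-order hypothesis: every such $\b\xi$ has $|\b\xi|=c$, so each contributing $\bnu(i_2)$ has the \emph{same} total degree $|\bnu(i_1)|+c$ and the truncation window $|\b m|\le p-|\bnu(i_1)|-c$ is uniform across $\b\xi$. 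This uniformity lets me interchange the $\b m$- and $\b\xi$-sums; for each fixed $\b m$ the inner $\b\xi$-sum collapses to $\sum_{|\b\xi|=c}a_{\b\xi}\D x^{\bnu(i_1)+\b m+\b\xi}G=\D x^{\bnu(i_1)+\b m}(\mathcal L G)=0$. Consequently every entry of $P\,T^T\DD p G(\b x-\b c_2)$ is a derivative of $\mathcal L G$ evaluated at $\b x-\b c_2\ne\b 0$, and therefore vanishes, giving $\b\delta^T T^T\DD p G(\b x-\b c_2)=\b u^T P\,T^T\DD p G(\b x-\b c_2)=0$.

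I expect the uniform-truncation step to be the main obstacle, since it is the one place the equal-order hypothesis is genuinely used. For a mixed-order PDE the multi-indices $\bnu(i_1)+\b\xi$ appearing in a fixed row of $P$ have differing total degrees, so the per-$\b\xi$ truncation windows no longer coincide; the inner sum then fails to reconstruct $\mathcal L G$ and a nonzero residual survives. That residual is exactly the error quantified in Subsection~\ref{section:m2m_varying_order}, and Lemma~\ref{lemma:deriv_less_order} (the equal-order form~\eqref{eq:deriv-representation-equal}) offers a complementary, derivative-by-derivative view of the same order-preserving cancellation used above.
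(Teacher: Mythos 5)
Your proof is correct, but it takes a genuinely different route from the paper's. The paper reduces by linearity to a single nonzero source coefficient $\alphamp_{\b k}$, invokes the equal-order case of Lemma~\ref{lemma:deriv_less_order} to write $\betamp_{\b s}=\alphamp_{\b k}v_{\b k,\b s}$ with $|\b s|=|\b k|$, and then explicitly reindexes the translated ``triangles'' $\b\chi^{\b s}_{\b s+\b\ell}$: because $|\b s|=|\b k|$, the shift multi-index $\b\ell$ ranges over the same set $\mathcal M(p-|\b k|)$ for every $\b s$, so the double sum recombines into $\D x^{\b k}G$ times the shift factors and reproduces $\b\rho'$ exactly. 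You instead argue dually and globally: you identify $\b\sigma-\b\rho=T\b\delta$ with $\b\delta=(EM^T-I)\alpham\in\ker(M^T)=\operatorname{range}(P^T)$ (using $M^TE=I$, $PM=0$, and the rank count, all of which check out against the block form of $M$ and Lemma~\ref{lemma:lower_triangular}), and then show $P\,T^T\DD{p}G(\b x-\b c_2)=\b 0$ because the equal-order hypothesis makes the truncation windows of the truncated Taylor shifts uniform across the PDE's terms, so each row of $P\,T^T\DD{p}G$ assembles into derivatives of $\mathcal L G=0$. Both arguments hinge on the identical cancellation---uniformity of the truncation window under the equal-order hypothesis---but yours dispenses with the single-coefficient reduction and with Lemma~\ref{lemma:deriv_less_order} entirely, and it isolates cleanly where the varying-order error originates (mismatched per-term windows in a row of $P\,T^T$). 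The paper's more explicit computation has the compensating advantage of constructing the quantities ($\b\chi$, the residual $\epsilon'$) that are reused verbatim in the proof of the varying-order bound in Theorem~\ref{thm:varying}, whereas extracting a quantitative error bound from your dual formulation would require additional work to control $\b u$.
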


\begin{proof}
Let $\b \alpha$ be the coefficients
of a `source' multipole expansion around center $\b c_1$
in uncompressed representation. Then
\[
  \b \beta = M^T \b \alpha.
\]
For $\b q\in \mathcal M(p)$, we introduce the variables
$\alphamp_{\b q} = \alpham_{\bnu^{-1}(\b q)}$,
$\betamp_{\b q} = [E \betam]_{\bnu^{-1}(\b q)}$,
$\b \rho'_{\b q} = \b \rho_{\bnu^{-1}(\b q)}$, and
$\b \sigma'_{\b q} = \b \sigma_{\bnu^{-1}(\b q)}$ to index the coefficients
$\alpham, E \betam, \b \rho$, and $\b \sigma$ using multi-indices
for the sake of readability. Then we have to show that
\[
  \sum_{\b q \in \mathcal{M}(p)} \b \sigma'_{\b q} \D x^{\b q} G(\b x - \b c_2)
  = \sum_{\b q \in \mathcal{M}(p)} \b \rho'_{\b q} \D x^{\b q} G(\b x - \b c_2).
\]
Without loss of generality, and using linearity, we assume that all
multipole coefficients $\alphamp$ are zero, except for $\alphamp_{\b k}$ for
some $\b k \in \mathcal{M}(p)$.
Using Lemma~\ref{lemma:deriv_less_order},
\begin{equation}
  \label{eq:deriv_equal_order}
  \D x^{\b k} G(\b x - \b c_2) =
    \sum_{\b s \in \bnu(\b j), |\b s| = |\b k|} v_{\b k, \b s} \D x^{\b s} G(\b x - \b c_2),
\end{equation}
where $v_{\b k, \b s}$ are constants. In this equation, we have the
condition that $|\b s| = |\b k|$ because the PDE has only terms of
equal order.
Consider the term $\alphamp_{\b k} \D x^{\b k}G$ in multipole evaluation.
Using \eqref{eq:deriv_equal_order}, it can be expanded as
\begin{equation}
  \alphamp_{\b k} \D x^{\b k} G(\b x - \b c_2) =
    \sum_{\b s \in \bnu(\b j), |\b s| = |\b k|}
    \alphamp_{\b k} v_{\b k, \b s} \D x^{\b s} G(\b x - \b c_2).
  \label{eq:deriv_equal_order_coeff}
\end{equation}
Then the left-hand side is the multipole
expansion in uncompressed representation, and the right-hand side
is the multipole expansion in compressed representation.
Embedding the compressed coefficients in the uncompressed coefficient
space results in
\begin{equation}
  \betamp_{\b s} = [E \betam]_{\bnu^{-1}(\b s)} =
  \begin{cases}
    \alphamp_{\b k} v_{\b k, \b s} & \text{for $\: \b s \in \bnu(j)$ and $|\b s| = |\b k|$}, \\
    0 & \text{otherwise}.
  \end{cases}
  \label{eq:define_beta_prime}
\end{equation}
Since only the coefficient $\alphamp_{\b k}$ is nonzero in the uncompressed expansion for
the center $\b c_1$,
\begin{equation}
  \b \rho'_{\b q} =
    \begin{cases}
      \frac{\b h ^ {\b q - \b k}}{(\b q-\b k)!}
        \alphamp_{\b k} & \text{for}\: \b q \ge \b k, \\
      0 & \text{otherwise}.
    \end{cases}
  \label{eq:define_rho_prime}
\end{equation}
In other words, the nonzero translated coefficients resulting from $\alphamp_{\b k}$ have multi-indices
$\b q =\b k +\b \ell$ where $\b \ell\ge \b 0$ is a multi-index, and
\begin{equation}
  \b \rho'_{\b k + \b \ell} = \frac{\b h ^ {\b \ell}}{\b \ell!} \alphamp_{\b k}
    \qquad(|\b k + \b \ell| \in \mathcal{M}(p)).
  \label{eq:rho_k_l}
\end{equation}
Let
\begin{equation}
  \b \chi^{\b s}_{\b q} =
    \begin{cases}
      \frac{\b h ^ {\b q-\b s}}{(\b q-\b s)!}
        \betamp_{\b s} & \text{for}\: \b q \ge \b s, \\
      0 & \text{otherwise}
    \end{cases}
  \label{eq:define_chi}
\end{equation} be the component of the translated coefficient
$\b \sigma_{\b q}$ resulting from the (compressed) `source' multipole coefficient
$\betamp_{\b s}$, where $\b q, \b s \in \mathcal{M}(p)$.
Figures~\ref{fig:m2m_diagrams}(e) shows values of $\b s, \b q \in \mathcal{M}(p)$
where $\b \chi$ is non-zero.
Using the definition of $\b \sigma$ (via $\b \sigma'$),
\begin{equation}
  \sum_{\b q \in \mathcal{M}(p)} \b \sigma'_{\b q} \D x^{\b q} G(\b x - \b c_2)
    = \sum_{\b q \in \mathcal{M}(p)} \left(
      \sum_{\b s \le \b q} \b \chi^{\b s}_{\b q} \D x^{\b q} \right) G(\b x - \b c_2).
  \label{eq:use_chi}
\end{equation}
Replacing $\b q=\b s+\b \ell$ we have,
\begin{align*}
  \sum_{\b q \in \mathcal{M}(p)} \b \sigma'_{\b q} \D x^{\b q} G(\b x - \b c_2)
    &\stackrel{\mathclap{\eqref{eq:use_chi}}}{=}
     \sum_{\b \ell \in \mathcal{M}(p)} \left(
      \sum_{\b s\in \mathcal{M}(p), |\b s + \b \ell| \le p}
      \b \chi^{\b s}_{\b s + \b \ell} \D x^{\b s + \b \ell} \right) G(\b x - \b c_2) \\
    &\stackrel{\mathclap{\eqref{eq:define_chi}}}{=}
      \sum_{\b \ell \in \mathcal{M}(p)}
      \left(
      \sum_{\b s\in \mathcal{M}(p), |\b s + \b \ell| \le p}
      \frac{\b h^{\b \ell}}{\b \ell!} \betamp_{\b s} \D x^{\b s + \b \ell}
      \right) G(\b x - \b c_2) \\
    &\stackrel{\mathclap{\eqref{eq:define_beta_prime}}}{=}
      \sum_{\b \ell \in \mathcal{M}(p)} \left(
      \sum_{\b s \in \bnu(\b j), |\b s| = |\b k|, |\b s + \b \ell| \le p}
      \frac{\b h^{\b \ell}}{\b \ell!} \alphamp_{\b k} v_{\b k, \b s} \D x^{\b s + \b \ell}
      \right) G(\b x - \b c_2) \\
    &=
      \sum_{\b \ell \in \mathcal{M}(p), |\b k + \b \ell| \le p} \left(
      \frac{\b h^{\b \ell}}{\b \ell!} \D x^{\b \ell}
      \sum_{\b s \in \bnu(\b j), |\b s| = |\b k|}
      \alphamp_{\b k} v_{\b k, \b s} \D x^{\b s} G(\b x - \b c_2)
      \right) \\
    &\stackrel{\mathclap{\eqref{eq:deriv_equal_order_coeff}}}{=}
      \sum_{\b \ell \in \mathcal{M}(p), |\b k + \b \ell| \le p}
      \frac{\b h^{\b \ell}}{\b \ell!} \D x^{\b \ell} \alphamp_{\b k} \D x^{\b k} G(\b x - \b c_2) \\
    &\stackrel{\mathclap{\eqref{eq:rho_k_l}}}{=}
      \sum_{\b \ell \in \mathcal{M}(p), |\b k + \b \ell| \le p}
      \b \rho'_{\b k + \b \ell} \D x^{\b k + \b \ell} G(\b x - \b c_2)
    \stackrel{\b k + \b \ell = \b q}{=}
    \sum_{\b q \in \mathcal{M}(p), \b q \ge \b k}
      \b \rho'_{\b q} \D x^{\b q} G(\b x - \b c_2) \\
    &\stackrel{\mathclap{\eqref{eq:define_rho_prime}}}{=}
      \sum_{\b q \in \mathcal{M}(p)}
      \b \rho'_{\b q} \D x^{\b q} G(\b x - \b c_2),
\end{align*}
completing the argument.
\end{proof}

Figure~\ref{fig:m2m_diagrams}(e) shows the values of $\b s, \b q in \mathcal{M}(p)$
where $\b \chi$ is non-zero and the proof uses the observation that
the area of each triangle in the figure is equal to the area of the triangle in
Figure~\ref{fig:m2m_diagrams}(b).

\subsubsection{Compressed Expansions for PDEs with Terms of Varying Order}
\label{section:m2m_varying_order}
A novel difficulty presented by the case of PDEs with terms of varying order
is that nonzero coefficients $v_{\b k, \b s}$ with $|\b k|\ne|\b s|$
will occur (cf. Lemma~\ref{lemma:deriv_less_order}).  Necessarily, the $v_{\b k, \b s}$ have spatial scaling
behavior depending on $|\b k|-|\b s|$.  To capture the dependency of the
coefficients $v_{\b k, \b s}$ on spatial scaling, we assume that
\begin{equation}
  |v_{\b k, \b s} a^{|\b k| - |\b s|}| < M'
  \label{eq:assumption_v}
\end{equation} for $\b k \in \mathcal{M}(p)$ and $\b s \in \bnu(\b j)$
and for some constant $M'$, where $a$ is the assumed maximum FMM box
size for which the bound \eqref{eq:assumption_g} holds.

\begin{remark}
For the Helmholtz equation, it can be shown that the dependency of $v_{\b k, \b s}$
on $\kappa$ can be isolated as follows:
\[
  v_{\b k, \b s} = \kappa^{|\b k| - |\b s|} D_{\b k, \b s}
\] for all $\kappa> 0$, where $D_{\b k, \b s}$ independent of $\kappa$.
Thus, assuming
\begin{equation}
  a\kappa < 1
  \label{eq:assumption_helmholtz}
\end{equation}
leads to
\[
  |v_{\b k, \b s} a^{|\b k| - |\b s|}|
  = | (a\kappa)^{|\b k| - |\b s|}  D_{\b k, \b s}|
  \le \max_{\b k,\b s} |D_{\b k, \b s}|= M',
\]
establishing a version of \eqref{eq:assumption_v}.
Operationally, this amounts to limiting the box size with
respect to the Helmholtz wave number, an approach that is
already commonly taken for the `low-frequency' expansions
as part of a wide-band FMM, cf.~\citep{cheng_wideband_2006}.
In Section~\ref{sec:m2m-experiment}, we further present numerical
evidence suggesting that the accuracy impact of
\eqref{eq:assumption_v} is no worse than that imposed by the use of
Cartesian expansions in the first place.
\end{remark}

We next state our main accuracy result for multipole-to-multipole
translations for PDEs with varying order.

\begin{theorem}
\label{thm:varying}
Let $\b \alpha$ be the coefficients of a `source' multipole expansion
in uncompressed representation and accurate outside a circle centered
at $\b c_1$ with radius $R_1$, and let $\b \rho$ be the coefficients
of a `target' multipole expansion in uncompressed representation
accurate around center $\b c_2$ with radius
$R_2 \ge R_1 + \norm{\b c_2 - \b c_1}$ (cf. Figure~\ref{fig:experiment_points}).
Let $\b \beta$ be the coefficients of a `source' multipole expansion
around center $\b c_1$ in compressed representation, and let $\b \sigma$
be the coefficients of a `target' multipole expansion
around center $\b c_2$ translated from the expansion with coefficients
$\b \beta$, i.e.,
\begin{align}
  \b \rho_n &= \sum_{i \in \{1, \ldots, N(p)\}, \bnu(i) \le \bnu(n)}
    \frac{\b h ^ {\bnu(n)-\bnu(i)}}{(\bnu(n)-\bnu(i))!} \alpham_i
    &(n\in\{1,\dots.N(p)\}),
    \label{eq:varying-translated-coeffs-rho} \\
  \b \sigma_n &= \sum_{i \in \{1, \ldots, N(p)\}, \bnu(i) \le \bnu(n)}
    \frac{\b h ^ {\bnu(n)-\bnu(i)}}{(\bnu(n)-\bnu(i))!} [E \betam]_i
    &(n\in\{1,\dots.N(p)\}),
    \label{eq:varying-translated-coeffs-sigma}
\end{align}
where $\b x \in \mathbb{R}^d$ is a target point and $\b h = \b c_3 - \b c_1$.

Assume that $|v_{\b k, \b s} a^{|\b k| - |\b s|}| < M'$ for $\b k \in \mathcal{M}(p)$
and $\b s \in \bnu(\b j)$ (cf.~\eqref{eq:assumption_v}). Then the
difference in the potential calculated at the target point between the
compressed expansion and the uncompressed expansion obeys a bound
identical to \eqref{eq:mpole_truncated} aside from the constant, i.e.
for $\norm{\b x - \b c_2} > a$,
\[
  |\epsilon(\b x)| = \left|\sum_{n=1}^{N(p)} \b \sigma_{n} \D x^{\bnu(n)} G(\b x - \b c_2)
    - \sum_{n=1}^{N(p)} \b \rho_{n} \D x^{\bnu(n)} G(\b x - \b c_2) \right|
    \le \frac{A'' R_2^{p + 1}}{\norm{\b x - \b c_2}^{p + p'}(\norm{\b x - \b c_2} - R_2)}
\] where $A''$ is a constant depending on $p$.
\end{theorem}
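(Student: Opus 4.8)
The plan is to reduce $\epsilon(\b x)$ to a difference of Taylor-shift truncation remainders, and then to bound each remainder by the standard multipole estimate~\eqref{eq:mpole_truncated}, using the scaling hypothesis~\eqref{eq:assumption_v} to absorb the extra terms produced by compression. First I would reduce to a single source coefficient: by linearity in $\alpham$ it suffices to treat the case in which the only nonzero uncompressed coefficient is $\alphamp_{\b k}$, summing the resulting bounds over $\b k\in\mathcal M(p)$ at the end and absorbing the count $N(p)$ into the $p$-dependent constant $A''$. For a fixed $\b k$, the key structural observation---exactly as in the equal-order argument of Theorem~\ref{thm:truncation_equal}, but now keeping the terms $\b s\in\bnu(\b j)$ with $|\b s|<|\b k|$ permitted by Lemma~\ref{lemma:deriv_less_order}---is that the two translation formulas~\eqref{eq:varying-translated-coeffs-rho} and~\eqref{eq:varying-translated-coeffs-sigma} are order-$p$ truncations of one and the same untruncated series. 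Indeed, the exact multivariate Taylor-shift identity gives $\D x^{\b m} G(\b x-\b c_1)=\sum_{\b\ell\ge\b 0}(\b h^{\b\ell}/\b\ell!)\D x^{\b m+\b\ell}G(\b x-\b c_2)$ for every multi-index $\b m$ (with $\b h=\b c_2-\b c_1$), while Lemma~\ref{lemma:deriv_less_order} gives $\D x^{\b k}G=\sum_{\b s}v_{\b k,\b s}\D x^{\b s}G$; summing the shift identity over $\b s$ with weights $v_{\b k,\b s}$ reproduces the $\b k$-shift, so the fully summed compressed and uncompressed translations coincide.

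Consequently the error is a difference of truncation remainders. Writing $R_{\b m}(\b x)=\sum_{|\b m+\b\ell|>p}(\b h^{\b\ell}/\b\ell!)\D x^{\b m+\b\ell}G(\b x-\b c_2)$ for the order-$p$ remainder of the $\b m$-shift, the previous step yields $\epsilon(\b x)=\sum_{\b k}\alphamp_{\b k}\bigl(R_{\b k}-\sum_{\b s}v_{\b k,\b s}R_{\b s}\bigr)$. The piece $\sum_{\b k}\alphamp_{\b k}R_{\b k}$ is exactly the truncation error of the \emph{exact} uncompressed translated multipole $\b\rho$ at radius $R_2$, so it already obeys~\eqref{eq:mpole_truncated} and needs no new work; all remaining effort goes into bounding $\sum_{\b k}\alphamp_{\b k}\sum_{\b s}v_{\b k,\b s}R_{\b s}$.

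For that I would bound each $R_{\b s}$ individually. Since $R_{\b s}$ is itself an order-$p$ multipole truncation remainder for the shifted kernel $\D x^{\b s}G$ (its lowest retained shift has $|\b\ell|=p+1-|\b s|$, whose $G$-derivative order is $p+1$ independently of $\b s$), the estimate~\eqref{eq:mpole_truncated}, together with the decay hypothesis~\eqref{eq:assumption_g}, applies and gives $|R_{\b s}(\b x)|\lesssim R_2^{p+1-|\b s|}/\bigl(\norm{\b x-\b c_2}^{p+p'}(\norm{\b x-\b c_2}-R_2)\bigr)$, valid for $\norm{\b x-\b c_2}>a\ge R_2$. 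The crux is then the interplay of this enlarged numerator $R_2^{p+1-|\b s|}$ (larger precisely when $|\b s|<|\b k|$) with the coefficient bound~\eqref{eq:assumption_v}, i.e.\ $|v_{\b k,\b s}|\le M' a^{-(|\b k|-|\b s|)}$. Using $\norm{\b c_1-\b y}<R_1\le R_2\le a$, the powers of $R_2$ and $a$ recombine: the factor $R_2^{-|\b s|}a^{|\b s|-|\b k|}$ is at most $R_2^{-|\b k|}$ because $a\ge R_2$ and $|\b s|\le|\b k|$, and combining with $|\alphamp_{\b k}|\le R_1^{|\b k|}/\b k!$ restores a clean $R_2^{p+1}$ in the numerator (up to a factor $(R_1/R_2)^{|\b k|}\le 1$).

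This last recombination is the step I expect to be the main obstacle: verifying that the growth of $R_{\b s}$ caused by lowering $|\b s|$ is offset \emph{exactly} by the decay built into~\eqref{eq:assumption_v}, and that the cancellation of the $R_2/a$ powers goes through under the standing assumption $R_1\le R_2\le a$. Once that is in hand, summing over the $\bigO{p^{d-1}}$ admissible $\b s$ and the $N(p)$ choices of $\b k$---with $\sum_{\b k}1/\b k!$ convergent---collects every remaining constant ($M$, $M'$, and the term counts) into a single $p$-dependent constant $A''$, giving $|\epsilon(\b x)|\le A'' R_2^{p+1}/\bigl(\norm{\b x-\b c_2}^{p+p'}(\norm{\b x-\b c_2}-R_2)\bigr)$, which is the asserted bound.
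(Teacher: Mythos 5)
Your proposal is correct and follows essentially the same route as the paper's proof: reduce to a single nonzero coefficient $\alphamp_{\b k}$ by linearity, use Lemma~\ref{lemma:deriv_less_order} to identify the compressed and uncompressed translations up to the ``overhanging'' terms with $|\b s + \b \ell| \le p < |\b k + \b \ell|$, and bound those by combining \eqref{eq:assumption_g}, \eqref{eq:assumption_v}, and the geometric inequalities $R_1 \le R_2 \le a$ (equivalently $\norm{\b x - \b c_2} \le a$, which is what the paper's own proof uses despite the statement's ``$>a$''). Your packaging of the mismatch as a difference of Taylor-shift truncation remainders, bounded family-by-family rather than term-by-term over the overhang index set, is a cosmetic variation that yields the same bound with a possibly larger $p$-dependent constant $A''$.
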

\begin{proof}
Let $\b \alpha$ be the coefficients of a `source' multipole expansion around
center $\b c_1$ in uncompressed representation. Then
\[
  \b \beta = M^T \b \alpha.
\]
Likewise, let $\b \rho$ and $\b \sigma$ be as defined in
\eqref{eq:varying-translated-coeffs-rho} and
\eqref{eq:varying-translated-coeffs-sigma}.

Define
$\alphamp, \b \rho', \b \sigma'$ as versions of their
un-primed counterparts indexed by multi-indices directly as before.
As in Theorem~\ref{thm:truncation_equal}, let
\begin{equation}
  \b \chi^{\b s}_{\b q} =
    \begin{cases}
      \frac{\b h ^ {\b q-\b s}}{(\b q-\b s)!}
        \betamp_{\b s} & \text{for}\: \b q \ge \b s, \\
      0 & \text{otherwise}.
    \end{cases}
  \label{eq:define_chi2}
\end{equation}
Without loss of generality, and using linearity, we assume that all
multipole coefficients $\alphamp$ are zero, except for $\alphamp_{\b k}$ for
some $\b k \in \mathcal{M}(p)$.
Embedding the compressed coefficients $\betam$ in the uncompressed
representation results in
\begin{equation}
  \betamp_{\b s} = [E \betam]_{\bnu^{-1}(\b s)} =
  \begin{cases}
    \alphamp_{\b k} v_{\b k, \b s} & \text{for $\: \b s \in \bnu(\b j)$ and $|\b s| \le |\b k|$}, \\
    0 & \text{otherwise},
  \end{cases}
  \label{eq:define_beta_prime_varying}
\end{equation} where $\b s \in \mathcal{M}(p)$.
We need to show that
\[
  \epsilon(\b x) = \sum_{\b q \in \mathcal{M}(p)} \b \sigma'_{\b q} \D x^{\b q} G(\b x - \b c_2)
  - \sum_{\b q \in \mathcal{M}(p)} \b \rho'_{\b q} \D x^{\b q} G(\b x - \b c_2)
\] obeys the desired bound.
Similar to \eqref{eq:deriv_equal_order}, using Lemma~\ref{lemma:deriv_less_order},
\begin{equation}
  \D x^{\b k} G(\b x - \b c_2) =
    \sum_{\b s \in \bnu(\b j), |\b s| \le |\b k|} v_{\b k, \b s} \D x^{\b s} G(\b x - \b c_2),
  \label{eq:deriv_varying_order}
\end{equation} where $v_{\b k, \b s}$ are constants. Unlike \eqref{eq:deriv_equal_order},
we sum over $|\b s| \le |\b k|$ because the PDE has terms of varying order.
Since (by assumption) only the coefficient $\alphamp_{\b k}$ is nonzero in the uncompressed
expansion for center $\b c_1$ and $\b q \in \mathcal{M}(p)$, $\b \rho'$ satisfies
\eqref{eq:define_rho_prime} and \eqref{eq:rho_k_l}.
Using \eqref{eq:varying-translated-coeffs-sigma} and evaluating the
multipole expansion yields
\begin{align}
  \sum_{\b q \in \mathcal{M}(p)} \b \sigma'_{\b q} \D x^{\b q} G(\b x - \b c_2)
    &= \sum_{\b q \in \mathcal{M}(p)} \left(
      \sum_{\b s \le \b q} \b \chi^{\b s}_{\b q} \D x^{\b q} \right) G(\b x - \b c_2) \nonumber \\
    &= \sum_{\b s \in \mathcal{M}(p)} \left(
      \sum_{\b q \in \mathcal{M}(p), \b s \le \b q} \b \chi^{\b s}_{\b q} \D x^{\b q}
      \right) G(\b x - \b c_2).
  \label{eq:use_chi_varying}
\end{align}
Replacing $\b q=\b s+\b \ell$,
\begin{align*}
  \sum_{\b q \in \mathcal M(p)} \b \sigma'_{\b q} \D x^{\b q} G(\b x - \b c_2)
    &\stackrel{\mathclap{\eqref{eq:use_chi_varying}}}{=}
      \sum_{\b s \in \mathcal{M}(p)} \left(
      \sum_{\b \ell\in \mathcal{M}(p - |\b s|)}
      \b \chi^{\b s}_{\b s + \b \ell} \D x^{\b s + \b \ell} \right) G(\b x - \b c_2) \\
    &\stackrel{\mathclap{\eqref{eq:define_chi2}}}{=}
      \sum_{\b s \in \mathcal{M}(p)} \left(
      \sum_{\b \ell\in \mathcal{M}(p - |\b s|)}
      \frac{\b h^{\b \ell}}{\b \ell!} \betamp_{\b s} \D x^{\b s + \b \ell}
      \right) G(\b x - \b c_2) \\
    &\stackrel{\mathclap{\eqref{eq:define_beta_prime_varying}}}{=}
      \sum_{\b s \in \bnu(\b j), |\b s| \le |\b k|} \left(
      \sum_{\b \ell\in \mathcal{M}(p - |\b s|)}
      \frac{\b h^{\b \ell}}{\b \ell!} \alphamp_{\b k} v_{\b k, \b s} \D x^{\b s + \b \ell}
      \right) G(\b x - \b c_2) \\
    &= \sum_{\b s \in \bnu(\b j), |\b s| \le |\b k|} \Bigg(
      \sum_{\b\ell \in \mathcal{M}(p - |\b k|)}
      \frac{\b h^{\b \ell}}{\b \ell!} \alphamp_{\b k} v_{\b k, \b s} \D x^{\b
      s + \b \ell} \\
      &\phantom{=}+ \sum_{\b\ell \in \mathcal{M}(p), p - |\b k| < |\b \ell| \le p - |\b s|}
      \frac{\b h^{\b \ell}}{\b \ell!} \alphamp_{\b k}
      v_{\b k, \b s} \D x^{\b s + \b \ell}
      \Bigg) G(\b x - \b c_2) \\
    &= \begin{multlined}[t] \sum_{\b \ell \in \mathcal{M}(p - |\b k|)}
      \frac{\b h^{\b \ell}}{\b \ell!} \alphamp_{\b k} \left(
      \sum_{\b s \in \bnu(\b j), |\b s| \le |\b k|}
      v_{\b k, \b s} \D x^{\b s}\right) \D x^{\b \ell} G(\b x - \b c_2) \\
      + \underbrace{\sum_{\b s \in \bnu(\b j), |\b s| \le |\b k|}\left(
      \sum_{\b\ell \in \mathcal{M}(p), p - |\b k| < |\b \ell| \le p - |\b s|}
      \frac{\b h^{\b \ell}}{\b \ell!} \alphamp_{\b k}
      v_{\b k, \b s} \D x^{\b s + \b \ell}\right) G(\b x - \b c_2)}_{\epsilon'(\b x)}
      \end{multlined}\\
    &\stackrel{\mathclap{\eqref{eq:deriv_varying_order}}}{=}
      \sum_{\b \ell \in \mathcal{M}(p - |\b k|)}
      \frac{\b h^{\b \ell}}{\b \ell!} \alphamp_{\b k} \D x^{\b k + \b \ell} G(\b x - \b c_2) + \epsilon'(\b x) \\
    &\stackrel{\mathclap{\eqref{eq:rho_k_l}}}{=}
      \sum_{\b \ell \in \mathcal{M}(p - |\b k|)}
      \b \rho'_{\b k + \b \ell} \D x^{\b k + \b \ell} G(\b x - \b c_2) + \epsilon'(\b x) \\
    &\stackrel{\mathclap{\b k + \b \ell = \b q}}{=}
      \sum_{\b q \in \mathcal{M}(p)}
      \b \rho'_{\b q} \D x^{\b q} G(\b x - \b c_2) + \epsilon'(\b x).
\end{align*}
From the above equality, it becomes clear that $\epsilon'$ as
defined in the derivation matches the error $\epsilon$ which
we are seeking to bound. As a result,
\begin{align}
  \epsilon(\b x) = \epsilon'(\b x) &=
     \sum_{\b s \in \bnu(\b j), |\b s| \le |\b k|}
     \left(\sum_{\b \ell \in \mathcal{M}(p), p - |\b k| < |\b \ell| \le p - |\b s|}
     \frac{\b h^{\b \ell}}{\b \ell!} \alphamp_{\b k}
     v_{\b k, \b s}\D x^{\b s + \b \ell}\right) G(\b x - \b c_2) \nonumber \\
         &=\sum_{\b s \in \bnu(\b j), |\b s| < |\b k|}
     \left(\sum_{\b \ell \in \mathcal{M}(p), p - |\b k| < |\b \ell| \le p - |\b s|}
     \frac{\b h^{\b \ell}}{\b \ell!} \alphamp_{\b k}
     v_{\b k, \b s}\D x^{\b s + \b \ell}\right) G(\b x - \b c_2),
  \label{eq:epsilon_simplified}
\end{align}
where the change in sum bounds in the last step is justified because the inner sum is
empty in the case of $|\b s|=|\b k|$.

All terms in the double sum in \eqref{eq:epsilon_simplified} are of
the form
\[
  \frac{\b h^{\b \ell}}{\b \ell!} \alphamp_{\b k} v_{\b k, \b s}
    \D x^{\b s + \b \ell} G(\b x - \b c_2)
\] for $\b s, \b \ell \in \mathcal{M}(p)$, $|\b \ell| > p - |\b k|$,
and $|\b s| < |\b k|$, which yields $ |\b k + \b \ell| \ge p+1$.
Therefore, $\norm{\b c_2 - \b c_1} \le R_2 - R_1 \le R_2$,
$\norm{\b c_1 - \b y} \le R_1 \le R_2$ and $\norm{\b x - \b c_2} \le a$.
Therefore
\begin{align}
  \left|\frac{\b h^{\b \ell}}{\b \ell!} \alphamp_{\b k} v_{\b k, \b s}
    \D x^{\b s + \b \ell} G(\b x - \b c_2)\right|
    &= \left|\frac{(\b c_2 - \b c_1)^{\b \ell} (\b c_1 - \b y)^{\b k}}
      {\b \ell! \b k!} v_{\b k, \b s} \D x^{\b s + \b \ell} G(\b x - \b c_2)\right| \nonumber \\
    &\stackrel{\text{\eqref{eq:assumption_g}}}{\le}
      \frac{M |v_{\b k, \b s}|}{\b k! \b l!}\frac{R_2^{|\b k + \b \ell|}}
      {\norm{\b x - \b c_2}^{|\b s + \b \ell| + p'}} \nonumber \\
    &= \frac{M |v_{\b k, \b s}|}{\b k! \b l!} \left(\frac{R_2}{\norm{\b x - \b c_2}}\right)^{|\b k + \b \ell|}
      \frac{\norm{\b x - \b c_2}^{|\b k| - |\b s|}}{\norm{\b x - \b c_2}^{p'}} \nonumber \\
      &\le \frac{M}{\b k! \b l! \norm{\b x - \b c_2}^{p'}} |v_{\b k, \b s}|
      \norm{\b x - \b c_2}^{|\b k| - |\b s|}
      \left(\frac{R_2}{\norm{\b x - \b c_2}}\right)^{p+1} \label{eq:error_no_assumption} \\
    &\le \frac{M}{\b k! \b l! \norm{\b x - \b c_2}^{p'}} |v_{\b k, \b s}| a^{|\b k| - |\b s|}
      \left(\frac{R_2}{\norm{\b x - \b c_2}}\right)^{p+1} \nonumber \\
    &\stackrel{\text{\eqref{eq:assumption_v}}}{\le}
    \frac{M M'}{\b k! \b l!\norm{\b x - \b c_2}^{p'}}
      \left(\frac{R_2}{\norm{\b x - \b c_2}}\right)^{p+1} \nonumber \\
    &\le \frac{M M'}{\b k! \b l!} \frac{R_2^{p + 1}}
      {\norm{\b x - \b c_2}^{p + p'}\left(\norm{\b x - \b c_2} - R_2\right)} \nonumber.
\end{align}
Therefore, as claimed, there exists a constant $A''$ such that
\[
  |\epsilon(\b x)| \le
    A'' \frac{R_2^{p + 1}}{\norm{\b x - \b c_2}^{p + p'}(\norm{\b x - \b c_2} - R_2)}.
\]
\end{proof}
The terms making up $\epsilon$ (or, equivalently, $\epsilon'$) are
shown in Figure~\ref{fig:m2m_diagrams}(f), represented as the
`overhanging' $\b \chi$ elements compared to the $\b \chi$ elements in
Figure~\ref{fig:m2m_diagrams}(d).

\subsubsection{Asymptotically Faster Multipole-to-Multipole Translation}
\label{section:m2m_compute_faster}

Using the multipole-to-multipole translation procedure as described
thus far,
shifting the center of a multipole expansion requires calculating
each of the $\bigO{p^d}$ coefficients at the new center from
$\bigO{p^{d-1}}$ coefficients around the old center and then
compressing them. A straightforward approach might use $\bigO{p^{d-1}}$
operations per target coefficient, resulting in an overall algorithm with $\bigO{p^{2d-1}}$
operations. The asymptotic cost of this algorithm can be improved by
making use of the Cartesian structure and storing intermediate results.

\begin{theorem}
Translating a multipole expansion in compressed representation around center
$\b c_1$ to a multipole expansion in compressed representation around center
$\b c_2$ can be achieved with $\bigO{p^{d}}$ operations.
\end{theorem}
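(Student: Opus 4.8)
The plan is to reduce the translation to the computation of the \emph{uncompressed} translated vector $\b \sigma = TE\betam$ of \eqref{eq:mpole_translated_compressed_coeffs}, followed by a single recompression $\b \psi = M^T\b \sigma$ as in \eqref{eq:define_psi}. By Theorem~\ref{thm:mult_decompression} the recompression costs only $\bigO{p^d}$, so the entire burden is to evaluate $\b \sigma$ in $\bigO{p^d}$ operations rather than the $\bigO{p^{2d-1}}$ of the naive scheme (each of the $\bigO{p^d}$ entries of $\b \sigma$ summing $\bigO{p^{d-1}}$ contributions from the nonzeros of $E\betam$).

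The key observations mirror those used for local-to-local translation. First, the translation kernel $\b h^{\bnu(q)-\bnu(i)}/(\bnu(q)-\bnu(i))!$ in \eqref{eq:mpole_translated_compressed_coeffs} factors across the $d$ coordinate directions, so $T$ is a separable (tensor-product) convolution over the simplex $\mathcal M(p)$. Second, the input $E\betam$ is supported on $\bnu(\b j)$, which by Lemma~\ref{lem:divide_multi_indices} is a union of $\bigO{1}$ ``slices'' $S_{i,s}$, i.e. sets on which one coordinate is held fixed. Using linearity of $T$, I would split $E\betam$ into its restrictions to these $\bigO{1}$ slices and translate each restriction independently, summing the results.

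For a single input slice with coordinate $i$ fixed at $s$, the restricted input is an array over the remaining $d-1$ coordinates. I would apply the separable convolution dimension by dimension, storing intermediate results exactly as in Lemma~\ref{lem:l2l_fix_one_dim}: the $d-1$ convolutions in the free directions each sweep $\bigO{p^{d-1}}$ stored intermediate values with an inner sum of $\bigO{p}$ terms, costing $\bigO{p^d}$ apiece, and the final ``spread'' of the resulting $(d-1)$-dimensional array along the fixed direction $i$ via the rank-one factor $h_i^{q_i-s}/(q_i-s)!$ fills the whole simplex in another $\bigO{p^d}$ operations. Thus each slice contributes $\bigO{p^d}$ work; with $\bigO{1}$ slices this yields the full $\b \sigma$ in $\bigO{p^d}$, and the concluding recompression $M^T\b \sigma$ keeps the total at $\bigO{p^d}$.

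I expect the main obstacle to be the bookkeeping that certifies the dimension-by-dimension sweep stays at $\bigO{p^d}$ when the input is sparse but the output is dense --- the reverse of the situation in Lemma~\ref{lem:l2l_fix_one_dim}, where the input was dense and the output compressed. A conceptually clean way to discharge this is to observe that, up to the diagonal factorial rescalings relating $\b h^{\b m}/\b m!$ to $\binom{\cdot}{\cdot}\b h^{\b m}$, the operator $TE$ is precisely the transpose of the restricted local-to-local operator whose cost was established in Lemma~\ref{lem:l2l_fix_one_dim}; since transposing a linear straight-line program preserves its count of multiply-add operations, the $\bigO{p^d}$ bound transfers immediately, and the explicit sweep above simply realizes that transposed program.
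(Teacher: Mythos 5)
Your proposal is correct and follows essentially the same route as the paper: exploit the separability of the translation kernel via a dimension-by-dimension sweep with stored intermediates, and use the fact that the compressed input is supported on $\bigO{1}$ coordinate slices (Lemma~\ref{lem:divide_multi_indices}) to cut the cost of the sweep from $\bigO{p^{d+1}}$ to $\bigO{p^d}$, followed by an $\bigO{p^d}$ recompression. The closing observation that $TE$ is, up to diagonal rescaling, the transpose of the restricted local-to-local operator is a nice additional way to certify the count, but it is not needed and is not used by the paper.
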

\begin{proof}
Let $\alphamp_{\b \bnu(i)} = \alpham_i$ and
$\b \rho'_{\b \bnu(i)} = \b \rho_{i}$ for $i \in \{1, \ldots, N(p)\}$.
Consider a version of \eqref{eq:mpole_translated_full_coeffs}
in three dimensions. Let $\bnu(q) = (\eta_1, \eta_2, \eta_3)$,
$\bnu(i) = (\zeta_1, \zeta_2, \zeta_3)$ and $\b h = \b c_2 - \b c_1$.
\begin{align*}
  \b \rho'_{\eta_1, \eta_2, \eta_3} &=
    \sum_{\zeta_3=0}^{\eta_3}
    \sum_{\zeta_2=0}^{\eta_2}
    \sum_{\zeta_1=0}^{\eta_1}
      \frac{\b h_1 ^ {\eta_1-\zeta_1}}{(\eta_1 - \zeta_1)!}
      \frac{\b h_2 ^ {\eta_2-\zeta_2}}{(\eta_2 - \zeta_2)!}
      \frac{\b h_3 ^ {\eta_3-\zeta_3}}{(\eta_3 - \zeta_3)!}
      \alphamp_{\zeta_1, \zeta_2, \zeta_3} \\
    &= \sum_{\zeta_3=0}^{\eta_3}
        \Bigg(
          \underbrace{
            \sum_{\zeta_2=0}^{\eta_2}
            \sum_{\zeta_1=0}^{\eta_1}
            \frac{\b h_1 ^ {\eta_1-\zeta_1}}{(\eta_1 - \zeta_1)!}
            \frac{\b h_2 ^ {\eta_2-\zeta_2}}{(\eta_2 - \zeta_2)!}
            \alphamp_{\zeta_1, \zeta_2, \zeta_3}
          }_{\b \xi_{\eta_1, \eta_2, \zeta_3}}
	\Bigg)
        \frac{\b h_3 ^ {\eta_3-\zeta_3}}{(\eta_3 - \zeta_3)!}
\end{align*}
Let
\[
  \b \xi_{\eta_1, \eta_2, \zeta_3} =
    \sum_{\zeta_2=0}^{\eta_2}
    \frac{\b h_2 ^ {\eta_2-\zeta_2}}{(\eta_2 - \zeta_2)!}
    \Bigg(
      \underbrace{
        \sum_{\zeta_1=0}^{\eta_1}
        \frac{\b h_1 ^ {\eta_1-\zeta_1}}{(\eta_1 - \zeta_1)!}
        \alphamp_{\zeta_1, \zeta_2, \zeta_3}
      }_{\b \xi'_{\eta_1, \zeta_2, \zeta_3}}
    \Bigg)
  \qquad{\text{for }(\eta_1, \eta_2, \zeta_3) \in \mathcal{M}(p)}
\]
and let
\[
  \b \xi'_{\eta_1, \zeta_2, \zeta_3} =
    \sum_{\zeta_1=0}^{\eta_1}
    \frac{\b h_1 ^ {\eta_1-\zeta_1}}{(\eta_1 - \zeta_1)!}
    \alphamp_{\zeta_1, \zeta_2, \zeta_3}
  \qquad{\text{for }(\eta_1, \zeta_2, \zeta_3) \in \mathcal{M}(p).}
\]
Since the values of $\b \xi'$ and $\b \xi$ are reused for different $\b \rho'$s,
precomputing and storing them removes redundant work. Precomputing $\b \xi'$ first,
then precomputing $\b \xi$ using $\b \xi'$, and finally computing $\b \rho'$ using $\b \xi$
requires calculating a sum of $\bigO{p}$ terms for $\bigO{p^3}$ elements each,
leading to a $\bigO{p^4}$ algorithm.
The analogous algorithm in two dimensions has two nested summations and thus requires
$\bigO{p^3}$ operations.
A straightforward generalization yields an algorithm with $\bigO{p^{d+1}}$
operations in $d$ dimensions.

The above algorithm is stated for expansions in uncompressed
representation. Replacing $\alphamp$ by $\betamp$ and $\b \rho'$
by $\b \sigma'$ straightforwardly yields an algorithm for expansions
in compressed representation with complexity $\bigO{p^{d+1}}$, the
same as the uncompressed case. To obtain an improvement in the
complexity, realize that $\betamp_{\bnu(i)} = [E \betam]_i = 0$
when $i \in \b j$ where $\b j$ is the set of  multi-indices in
the compressed representation as defined in
Theorem~\ref{thm:mult_decompression}. By the resulting reduction in
the number of source multipole coefficients, the translation require
$\bigO{p^{d}}$ operations in the compressed case.

To see that this works even in the general case of multiple orientations of
coefficient hyperplanes (for example the two dark black hyperplanes in
Figure~\ref{fig:j_weird} for the PDE $\partial^2/\partial x \partial y = 0$),
consider the two-dimensional case, in which the translation operator is given by
\[
  \b \sigma'_{\eta_1, \eta_2} =
    \sum_{\zeta_2=0}^{\eta_2}
    \sum_{\zeta_1=0}^{\eta_1}
      \frac{\b h_1 ^ {\eta_1-\zeta_1}}{(\eta_1 - \zeta_1)!}
      \frac{\b h_2 ^ {\eta_2-\zeta_2}}{(\eta_2 - \zeta_2)!}
      \betamp_{\zeta_1, \zeta_2}.
\]
Let $\b t = \bnu(h(1))$. Then assume that the set of multi-indices for
the compressed representation is
\[
  \bnu(\b j) = \{(\zeta_1, \zeta_2): \zeta_1 \in \{0, \ldots, \b t_1\}, \zeta_1 + \zeta_2 \le p\}
    \cup \{(\zeta_1, \zeta_2): \zeta_2 \in \{(0, \ldots, \b t_2\}, \zeta_1 + \zeta_2 \le p\},
\] which implies that, $\betamp_{\zeta_1, \zeta_2} = 0$ if
$\zeta_1 > \b t_1$ and $\zeta_2 > \b t_2$.
Separating the coefficient sets by hyperplane orientation and
collecting subexpressions for each analogously to the above, we find
\begin{align*}
  \b \sigma'_{\eta_1, \eta_2} =&
    \sum_{\zeta_2=0}^{\min(\b t_2, \eta_2)}\left(
      \frac{\b h_2 ^ {\eta_2-\zeta_2}}{(\eta_2-\zeta_2)!}
      \sum_{\zeta_1=0}^{\eta_1}
        \frac{\b h_1 ^ {\eta_1-\zeta_1}}{(\eta_1-\zeta_1)!}
        \betamp_{\zeta_1, \zeta_2}\right) \\
    &+ \sum_{\zeta_1=0}^{\min(\b t_1, \eta_1)}\left(
      \frac{\b h_1 ^ {\eta_1-\zeta_1}}{(\eta_1-\zeta_2)!}
        \sum_{\zeta_2=\b t_2 + 1}^{\eta_2}
        \frac{\b h_2 ^ {\eta_2-\zeta_2}}{(\eta_2-\zeta_2)!}
        \betamp_{\zeta_1, \zeta_2}\right)\\
    =&\sum_{\zeta_2=0}^{\min(\b t_2, \eta_2)}\left(
      \b \tau^{1}_{\eta_1, \zeta_2} \frac{\b h_2^{\eta_2}}{\eta_2!}\right)
    + \sum_{\zeta_1=0}^{\min(\b t_1, \eta_1)}\left(
      \b \tau^{2}_{\zeta_1, \eta_2} \frac{\b h_1^{\eta_1}}{\eta_1!}\right),
\end{align*}
where
\begin{align*}
  \b \tau^{1}_{\eta_1, \zeta_2} &=
    \sum_{\zeta_1=0}^{\eta_1}
    \frac{\b h_1 ^ {\eta_1-\zeta_1}}{(\eta_1-\zeta_1)!}
    \betamp_{\zeta_1, \zeta_2}
  \qquad{\text{for }\zeta_2 \in \{0, \ldots, \b t_2\}, \eta_1 \in \{0, \ldots, p\}},
  \text{ and}\\
  \b \tau^{2}_{\zeta_1, \eta_2} &=
    \sum_{\zeta_2=\b t_2 + 1}^{\eta_2}
    \frac{\b h_2 ^ {\eta_2-\zeta_2}}{(\eta_2-\zeta_2)!}
    \betamp_{\zeta_1, \zeta_2}
  \qquad{\text{for }\zeta_1 \in \{0, \ldots, \b t_1\}, \eta_2 \in \{0, \ldots, p\}}.
\end{align*}
Each entry in $\b \tau^{1}, \b \tau^{2}$ requires $\bigO{p}$ operations, and there are
$\bigO{p\cdot(\b t_1 + \b t_2)} = \bigO{p}$ entries. This is similar to
Theorem~\ref{thm:l2l_cost}, where we used Lemma~\ref{lem:divide_multi_indices} to
show that the cost is $\bigO{p\cdot (\b t_1 + \b t_2)} = \bigO{p}$.
Then calculating $\b \sigma'_{\eta_1, \eta_2}$ requires a constant number of operations,
resulting in an algorithm with $\bigO{p^{2}}$ operations overall.

A straightforward generalization of the above examples to the
$d$-dimensional case shows the result.
\end{proof}

\subsection{Asymptotic Cost of Derivative Evaluation}
\label{sec:calculate_derivatives}

In Definition~\ref{def:wp}, we have introduced $w(p)$ as the amortized
operation count needed to compute one derivative when computing
the derivatives $\DD{p}G(\b x)_{\b j}$ required in the
compressed case.
For kernels that depend only on the (scalar) distance
between two points, \citet{tausch2003fast} gives an algorithm with $\bigO{p^{d+1}}$
cost for calculating all derivatives $\DD{p}G(\b x)$ when the derivatives of
the kernel
\begin{equation}
  \label{eq:tausch-dist-derivatives}
  \left(\left(\frac1{r} \frac{d}{dr}\right)^i G(r)\right)_{i \in \{0, \ldots, p\}}
\end{equation}
with respect to the distance are known. A straightforward modification of this
algorithm yields an algorithm to compute $\DD{p}G(\b x)_{\b j}$ with
$w(p) = \bigO{p}$ cost, assuming an algorithm to compute
\eqref{eq:tausch-dist-derivatives} is available.

In some special cases, the amortized cost for computing derivatives
can be lowered further. In the following sections, we describe a few
algorithms we found for reducing the cost to $w(p)=\bigO{1}$.
To the best of our knowledge, these formulas have not previously been
used in the context of Fast Multipole Methods.

\subsubsection{Laplace 2D}
Let $n \in \mathbb{N}_0$. 
For conciseness, here and below we let $r=\sqrt{x^2 + y^2}$.
To facilitate derivative computations in our expansions,
we give a recurrence for the $x$ derivatives of $\log(r)$:
\begin{align*}
  r^2 \frac{\partial^{n}}{\partial x^{n}} \log(r) =
    - 2(n-1)x \frac{\partial^{n-1}}{\partial x^{n-1}} \log(r)
    - (n-1)(n-2) \frac{\partial^{n-2}}{\partial x^{n-2}} \log(r)
\end{align*} where $n\ge3$.
A recurrence can still be found when a $y$ derivative is present:
\begin{equation}
  r^2 \frac{\partial^{n+1}}{\partial x^{n} y} \log(r) =
    - 2(n-1)x \frac{\partial^{n}}{\partial x^{n-1} y} \log(r)
    - (n-1)(n-2) \frac{\partial^{n-1}}{\partial x^{n-2}y} \log(r)
    - 2 y \frac{\partial^{n}}{\partial x^{n}} \log(r).
  \label{eq:deriv_laplace_2d}
\end{equation}
These formulae can be derived using induction or by using the
definitions of the $n^{\text{th}}$ order Chebyshev
polynomials of the first and second kind as defined
in \dlmfref{18.5.1} and \dlmfref{18.5.2}.

For our expansion machinery, we can choose we can choose $\b nu$ such
that
\[
  \bnu(\b j) = \{(n, 0): n \in \{0, \ldots p\}\} \cup \{(n, 1): n \in \{0, \ldots, p - 1\}\},
\]
i.e. so that compressed expansions involve only the two `slices' of multi-indices
shown in Figure~\ref{fig:j_laplace}. Therefore, the above recurrences suffice to provide
an amortized constant-time algorithm with $w(p)=\bigO{1}$ for the
Laplace kernel in two dimensions.

\subsubsection{Laplace 3D}

Using induction we can give a recurrence for the $x$ derivatives of $1/r$:
\begin{align*}
  r^2 \frac{\partial^{n}}{\partial x^{n}} \left(\frac{1}{r}\right) =
    - (2n-1)x \frac{\partial^{n-1}}{\partial x^{n-1}} \left(\frac{1}{r}\right)
    - (n-1)^2 \frac{\partial^{n-2}}{\partial x^{n-2}} \left(\frac{1}{r}\right)
\end{align*} where $n\ge2$.
A recurrence can still be found when a $y$ derivative is present:
\begin{align*}
  r^2 \frac{\partial^{n+1}}{\partial x^{n} y} \left(\frac{1}{r}\right) =
    - (2n-1)x \frac{\partial^{n}}{\partial x^{n-1} y} \left(\frac{1}{r}\right)
    - (n-1)^2 \frac{\partial^{n-1}}{\partial x^{n-2}y} \left(\frac{1}{r}\right)
    - 2 y \frac{\partial^{n}}{\partial x^{n}}
    \left(\frac{1}{r}\right).
\end{align*}
This in turn can be generalized to the case of an arbitrary number
of derivatives along the $x$ and $y$ axes:
\begin{align*}
  r^2 \frac{\partial^{n+m}}{\partial x^{n} y^m} \left(\frac{1}{r}\right) =
    & -(2n-1)x \frac{\partial^{n+m-1}}{\partial x^{n-1} y^m} \left(\frac{1}{r}\right)
      - (n-1)^2 \frac{\partial^{n+m-2}}{\partial x^{n-2} y^m} \left(\frac{1}{r}\right) \\
    & - 2m y \frac{\partial^{n+m-1}}{\partial x^{n} y^{m-1}} \left(\frac{1}{r}\right)
      - m(m-1) \frac{\partial^{n+m-2}}{\partial x^{n} y^{m-2}} \left(\frac{1}{r}\right)
\end{align*} for all $m\ge 0, n\ge 1$ where the last term is defined and nonzero for $m\ge 2$ and the preceding
term is defined and nonzero for $m\ge 1$.
An analogous generalization to three variables yields the recurrence formula
\begin{align}
  r^2 \frac{\partial^{n+m+l}}{\partial x^{n} y^m z^l} \left(\frac{1}{r}\right) =
    & -(2n-1)x \frac{\partial^{n+m-1}}{\partial x^{n-1} y^m z^l} \left(\frac{1}{r}\right)
      - (n-1)^2 \frac{\partial^{n+m-2}}{\partial x^{n-2}y^m z^l} \left(\frac{1}{r}\right)
      - 2m y \frac{\partial^{n+m-1}}{\partial x^{n} y^{m-1} z^l} \left(\frac{1}{r}\right) \nonumber \\
    & - m(m-1) \frac{\partial^{n+m-2}}{\partial x^{n} y^{m-2} z^l} \left(\frac{1}{r}\right)
      - 2l z \frac{\partial^{n+m-1}}{\partial x^{n} y^{m} z^{l-1}} \left(\frac{1}{r}\right)
      - l(l-1) \frac{\partial^{n+m-2}}{\partial x^{n} y^{m} z^{l-2}} \left(\frac{1}{r}\right).
  \label{eq:deriv_laplace_3d}
\end{align}
As above, this leads to an algorithm for the computation of the
derivatives need for our expansions with $w(p) = \bigO{1}$.

\subsubsection{Biharmonic 2D}
Using induction, we can prove that,
\begin{align*}
  r^2 \frac{\partial^{n}}{\partial x^{n}} \left(r^2 \log(r)\right) = &
    -2(n-2)x \frac{\partial^{n-1}}{\partial x^{n-1}} \left(r^2 \log(r)\right)
    -(n-1)(n-4) \frac{\partial^{n-2}}{\partial x^{n-2}} \left(r^2 \log(r)\right)
 \end{align*} for $n \ge 5$.
Differentiating by $y$, we get,
\begin{align*}
  r^2 \frac{\partial^{n+1}}{\partial x^{n}y} \left(r^2 \log(r)\right) = &
      -2(n-2)x \frac{\partial^{n}}{\partial x^{n-1}y} \left(r^2 \log(r)\right) \\
    &-(n-1)(n-4) \frac{\partial^{n-1}}{\partial x^{n-2}y} \left(r^2 \log(r)\right)
      -2y\frac{\partial^{n}}{\partial x^{n}} \left(r^2 \log(r)\right)
 \end{align*}
which in turn leads to,
\begin{align}
    r^2 \frac{\partial^{n+m}}{\partial x^{n}y^m} \left(r^2 \log(r)\right) = &
        -2(n-2)x \frac{\partial^{n+m-1}}{\partial x^{n-1}y^m} \left(r^2 \log(r)\right)
        -(n-1)(n-4) \frac{\partial^{n+m-2}}{\partial x^{n-2}y^m} \left(r^2 \log(r)\right) \nonumber \\
      & -2 m y\frac{\partial^{n+m-1}}{\partial x^{n}y^{m-1}} \left(r^2 \log(r)\right)
        -m(m-1) \frac{\partial^{n+m-2}}{\partial x^{n}y^{m-2}} \left(r^2 \log(r)\right)
  \label{eq:deriv_biharmonic_2d}
\end{align} which is true for $n\ge5, m\ge0$ where the last term is defined and nonzero
for $m\ge2$ and the preceding term is defined and nonzero for $m\ge 1$.
As above, this leads to a $w(p)=\bigO{1}$ algorithm for computing the needed derivatives.

\subsection{Summary}
\begin{table}
\centering
\begin{tabular}{lcccc}\toprule
                                  & P2L / M2P      & P2M / L2P & M2M / L2L & M2L              \\ \midrule
Taylor Series                     & $p^{d} + p^{d-1} w(p) $  & $p^d$     & $p^{d+1}$ & $p^{2d}$         \\
Taylor Series with FFT            & $p^{d} + p^{d-1} w(p) $  & $p^d$     & $p^{d+1}$ & $p^{d}\log(p)$   \\
Compressed Taylor Series          & $p^{d-1} w(p)$ & $p^d$     & $p^d$     & $p^{2d-2}$       \\
Compressed Taylor Series with FFT & $p^{d-1} w(p)$ & $p^d$     & $p^{d}$   & $p^{d-1}\log(p)$ \\ \bottomrule
\end{tabular}
  \caption{
    Time complexities for expansions, translations and evaluations.
    In the table,
    P2L is the formation of a local expansion from a point source,
    P2M is the formation of a multipole expansion from a point source,
    M2M is the translation of a multipole expansion into another multipole expansion,
    M2L is the translation of a multipole expansion into a local expansion,
    L2L is the translation of a local expansion into another local expansion,
    M2P is the evaluation of a multipole expansion at a target point, and
    L2P is the evaluation of a local expansion at a target point.
  } \label{tab:compressed}
\end{table}

Table~\ref{tab:compressed} summarizes time complexities for all the 
formation, translation and evaluation operators for both
local and multipole expansions. In the table, $p$ refers to the order of the
expansions, $d$ is the number of dimensions and $w(p)$ is the amortized asymptotic cost
of calculating one derivative of the potential function
(cf.~Section~\ref{sec:calculate_derivatives}).

\section{Numerical Results}
\label{sec:results}

In order to support the claims presented, we conduct three
numerical experiments. We have implemented the algorithms in OpenCL and have published
the codes under a permissive MIT license. You can reproduce these experiments
as described in \citep{papercode}.

\subsection{Accuracy of Multipole-to-Multipole Translation}
\label{sec:m2m-experiment}
\begin{figure}[ht!]
  \centering
    \begin{subfigure}[t]{.29\linewidth}
      \caption{Laplace 2D}
      \label{fig:m2m_accuracy_laplace_2d}
      \begin{tikzpicture}[scale=0.5]

\definecolor{crimson2143940}{RGB}{214,39,40}
\definecolor{darkgray176}{RGB}{176,176,176}
\definecolor{darkorange25512714}{RGB}{255,127,14}
\definecolor{forestgreen4416044}{RGB}{44,160,44}
\definecolor{lightgray204}{RGB}{204,204,204}
\definecolor{mediumpurple148103189}{RGB}{148,103,189}
\definecolor{sienna1408675}{RGB}{140,86,75}
\definecolor{steelblue31119180}{RGB}{31,119,180}

\begin{axis}[
legend cell align={left},
legend style={
  fill opacity=0.8,
  draw opacity=1,
  text opacity=1,
  at={(0.03,0.97)},
  anchor=north west,
  draw=lightgray204
},
log basis x={10},
log basis y={10},
tick align=outside,
tick pos=left,
x grid style={darkgray176},
xlabel={Geometric parameter \(\displaystyle R\)},
xmajorgrids,
xmin=0.000740095979741405, xmax=0.329876977693224,
xmode=log,
xtick style={color=black},
xtick={1e-05,0.0001,0.001,0.01,0.1,1,10},
xticklabels={
  \(\displaystyle {10^{-5}}\),
  \(\displaystyle {10^{-4}}\),
  \(\displaystyle {10^{-3}}\),
  \(\displaystyle {10^{-2}}\),
  \(\displaystyle {10^{-1}}\),
  \(\displaystyle {10^{0}}\),
  \(\displaystyle {10^{1}}\)
},
y grid style={darkgray176},
ylabel={\(\displaystyle \epsilon_{\mathrm{rel}}\)},
ymajorgrids,
ymin=1.20758319096076e-16, ymax=5.99497204417808e-16,
ymode=log,
ytick style={color=black},
ytick={2e-16,5e-16},
yticklabels={
\(\displaystyle {2e-16}\),
\(\displaystyle {5e-16}\),
  \(\displaystyle {10^{-17}}\),
  \(\displaystyle {10^{-16}}\),
  \(\displaystyle {10^{-15}}\),
  \(\displaystyle {10^{-14}}\)
}
]
\addplot [semithick, steelblue31119180, mark=*, mark size=3, mark options={solid}]
table {%
0.0009765625 1.29881566626567e-16
0.001953125 1.33095661669878e-16
0.00390625 1.36375013868888e-16
0.0078125 1.3137549857189e-16
0.015625 1.31431284610237e-16
0.03125 1.33578714800995e-16
0.0625 1.35930548470385e-16
0.125 1.42812706088732e-16
0.25 1.52140842348917e-16
};
\addlegendentry{$p=2$}
\addplot [semithick, darkorange25512714, mark=*, mark size=3, mark options={solid}]
table {%
0.0009765625 2.07781920571944e-16
0.001953125 2.02247773727267e-16
0.00390625 2.14772584352292e-16
0.0078125 2.07468875634653e-16
0.015625 2.09250758628061e-16
0.03125 2.12391739986123e-16
0.0625 2.26397443509937e-16
0.125 2.32197187192208e-16
0.25 2.55453920494573e-16
};
\addlegendentry{$p=4$}
\addplot [semithick, forestgreen4416044, mark=*, mark size=3, mark options={solid}]
table {%
0.0009765625 2.60750620505335e-16
0.001953125 2.4913477779286e-16
0.00390625 2.81449302268196e-16
0.0078125 2.83172854419727e-16
0.015625 2.79087883967425e-16
0.03125 2.91861853704552e-16
0.0625 3.05207080735614e-16
0.125 3.19738595163628e-16
0.25 3.3793053682082e-16
};
\addlegendentry{$p=6$}
\addplot [semithick, crimson2143940, mark=*, mark size=3, mark options={solid}]
table {%
0.0009765625 2.6054189568165e-16
0.001953125 2.49176317322802e-16
0.00390625 2.87218373748481e-16
0.0078125 3.23626265830925e-16
0.015625 3.57931194809712e-16
0.03125 3.60634931484996e-16
0.0625 3.74310888671205e-16
0.125 3.88902217923051e-16
0.25 4.17535491448571e-16
};
\addlegendentry{$p=8$}
\addplot [semithick, mediumpurple148103189, mark=*, mark size=3, mark options={solid}]
table {%
0.0009765625 2.60718159107599e-16
0.001953125 2.49249546441487e-16
0.00390625 2.87088977452231e-16
0.0078125 3.24023259456403e-16
0.015625 3.79131859505942e-16
0.03125 4.27031976048005e-16
0.0625 4.41787311107607e-16
0.125 4.76417421837435e-16
0.25 4.80197079191499e-16
};
\addlegendentry{$p=10$}
\addplot [semithick, sienna1408675, mark=*, mark size=3, mark options={solid}]
table {%
0.0009765625 2.60750620583157e-16
0.001953125 2.49132441835096e-16
0.00390625 2.86873814390951e-16
0.0078125 3.23899481576358e-16
0.015625 3.79074840094845e-16
0.03125 4.52063862278577e-16
0.0625 5.12039277726096e-16
0.125 5.47521062402389e-16
0.25 5.57386830083729e-16
};
\addlegendentry{$p=12$}
\end{axis}
      \end{tikzpicture}
    \end{subfigure}\hspace{3mm}
    \begin{subfigure}[t]{.29\linewidth}
      \caption{Helmholtz 2D}
      \label{fig:m2m_accuracy_helmholtz_2d}
      \begin{tikzpicture}[scale=0.5]

\definecolor{crimson2143940}{RGB}{214,39,40}
\definecolor{darkgray176}{RGB}{176,176,176}
\definecolor{darkorange25512714}{RGB}{255,127,14}
\definecolor{forestgreen4416044}{RGB}{44,160,44}
\definecolor{gray}{RGB}{128,128,128}
\definecolor{lightgray204}{RGB}{204,204,204}
\definecolor{mediumpurple148103189}{RGB}{148,103,189}
\definecolor{sienna1408675}{RGB}{140,86,75}
\definecolor{steelblue31119180}{RGB}{31,119,180}

\begin{axis}[
legend cell align={left},
legend style={
  fill opacity=0.8,
  draw opacity=1,
  text opacity=1,
  at={(0.03,0.97)},
  anchor=north west,
  draw=lightgray204
},
log basis x={10},
log basis y={10},
tick align=outside,
tick pos=left,
x grid style={darkgray176},
xlabel={Geometric parameter \(\displaystyle R\)},
xmajorgrids,
xmin=0.000740095979741405, xmax=0.329876977693224,
xmode=log,
xtick style={color=black},
xtick={1e-05,0.0001,0.001,0.01,0.1,1,10},
xticklabels={
  \(\displaystyle {10^{-5}}\),
  \(\displaystyle {10^{-4}}\),
  \(\displaystyle {10^{-3}}\),
  \(\displaystyle {10^{-2}}\),
  \(\displaystyle {10^{-1}}\),
  \(\displaystyle {10^{0}}\),
  \(\displaystyle {10^{1}}\)
},
y grid style={darkgray176},
ymajorgrids,
ymin=1e-16, ymax=0.0169244644750423,
ymode=log,
ytick style={color=black},
ytick={1e-18,1e-16,1e-14,1e-12,1e-10,1e-08,1e-06,0.0001,0.01,1,100},
yticklabels={
  \(\displaystyle {10^{-18}}\),
  \(\displaystyle {10^{-16}}\),
  \(\displaystyle {10^{-14}}\),
  \(\displaystyle {10^{-12}}\),
  \(\displaystyle {10^{-10}}\),
  \(\displaystyle {10^{-8}}\),
  \(\displaystyle {10^{-6}}\),
  \(\displaystyle {10^{-4}}\),
  \(\displaystyle {10^{-2}}\),
  \(\displaystyle {10^{0}}\),
  \(\displaystyle {10^{2}}\)
}
]
\addplot [semithick, steelblue31119180, mark=*, mark size=3, mark options={solid}]
table {%
0.0009765625 2.52630891125454e-10
0.001953125 2.02070814155723e-09
0.00390625 1.61603706432814e-08
0.0078125 1.29198525199092e-07
0.015625 1.03224230339869e-06
0.03125 8.23651331405885e-06
0.0625 6.55494162438918e-05
0.125 0.000518704630468735
0.25 0.00403973645644897
};
\addlegendentry{$p=2$}
\addplot [semithick, darkorange25512714, mark=*, mark size=3, mark options={solid}]
table {%
0.0009765625 3.1423736447745e-15
0.001953125 2.54354923651297e-15
0.00390625 7.21091173328923e-14
0.0078125 2.34773492262099e-12
0.015625 7.51444198617501e-11
0.03125 2.40410443781941e-09
0.0625 7.69003375890671e-08
0.125 2.45909997112499e-06
0.25 7.84044072187325e-05
};
\addlegendentry{$p=4$}
\addplot [semithick, forestgreen4416044, mark=*, mark size=3, mark options={solid}]
table {%
0.0009765625 3.18827191110063e-15
0.001953125 3.40708916432974e-15
0.00390625 1.96717594538327e-15
0.0078125 1.47599187835011e-15
0.015625 3.14267932541664e-14
0.03125 4.01651886038157e-12
0.0625 5.32385133302175e-10
0.125 7.25043316102372e-08
0.25 1.02146737644492e-05
};
\addlegendentry{$p=6$}
\addplot [semithick, crimson2143940, mark=*, mark size=3, mark options={solid}]
table {%
0.0009765625 3.16213950422933e-15
0.001953125 3.40434069405685e-15
0.00390625 1.96642476184877e-15
0.0078125 1.45584451270502e-15
0.015625 5.69996215547025e-15
0.03125 1.51962557441985e-14
0.0625 7.96089048979084e-12
0.125 4.44340150464816e-09
0.25 2.59688766808264e-06
};
\addlegendentry{$p=8$}
\addplot [semithick, mediumpurple148103189, mark=*, mark size=3, mark options={solid}]
table {%
0.0009765625 3.14899970995434e-15
0.001953125 3.40459532077101e-15
0.00390625 1.96964351143239e-15
0.0078125 1.45733344171107e-15
0.015625 5.70564953384534e-15
0.03125 3.13714660600394e-15
0.0625 1.79684975758327e-13
0.125 4.05046665612081e-10
0.25 9.61067322990833e-07
};
\addlegendentry{$p=10$}
\addplot [semithick, sienna1408675, mark=*, mark size=3, mark options={solid}]
table {%
0.0009765625 3.16209370863389e-15
0.001953125 3.40531728954214e-15
0.00390625 1.96738254587234e-15
0.0078125 1.45857602811869e-15
0.015625 5.70197691575999e-15
0.03125 3.13385384074227e-15
0.0625 5.33511053120514e-15
0.125 4.3277683661969e-11
0.25 4.14708235046696e-07
};
\addlegendentry{$p=12$}
\addplot [semithick, gray, dashed]
table {%
0.0009765625 2.40787056472836e-10
0.00605867346938775 5.74997092176054e-08
0.0111407844387755 3.57503519182243e-07
0.0162228954081633 1.10387046165398e-06
0.021305006377551 2.50021878133643e-06
0.0263871173469388 4.75016672293317e-06
0.0314692283163265 8.05733253114781e-06
0.0365513392857143 1.26253344506839e-05
0.041633450255102 1.86577907262452e-05
0.0467155612244898 2.63583196025351e-05
0.0517976721938775 3.59305393242573e-05
0.0568797831632653 4.75780681361154e-05
0.0619618941326531 6.1504524282813e-05
0.0670440051020408 7.79135260090537e-05
0.0721261160714286 9.70086915595411e-05
0.0772082270408163 0.000118993639178979
0.0822903380102041 0.00014407198711207
0.0873724489795918 0.000172447353603519
0.0924545599489796 0.000204323356898029
0.0975366709183673 0.000239903615240304
0.102618781887755 0.000279391746875047
0.107700892857143 0.000322991370046962
0.112783003826531 0.000370906103000752
0.117865114795918 0.000423339563981122
0.122947225765306 0.000480495371232774
0.128029336734694 0.000542577143000412
0.133111447704082 0.00060978849752874
0.138193558673469 0.000682333053062462
0.143275669642857 0.000760414427846281
0.148357780612245 0.0008442362401249
0.153439891581633 0.000934002108143024
0.15852200255102 0.00102991565014535
0.163604113520408 0.0011321804843766
0.168686224489796 0.00124100022908146
0.173768335459184 0.00135657850250463
0.178850446428571 0.00147911892289083
0.183932557397959 0.00160882510848476
0.189014668367347 0.00174590067753111
0.194096779336735 0.0018905492482746
0.199178890306122 0.00204297443895992
0.20426100127551 0.00220337986783179
0.209343112244898 0.00237196915313489
0.214425223214286 0.00254894591311395
0.219507334183673 0.00273451376601365
0.224589445153061 0.00292887633007872
0.229671556122449 0.00313223722355384
0.234753667091837 0.00334480006468372
0.239835778061224 0.00356676847171306
0.244917889030612 0.00379834606288658
0.25 0.00403973645644897
};
\addlegendentry{$R^{3}$}
\addplot [semithick, red, dashed]
table {%
0.0009765625 2.04466946063466e-38
0.00605867346938775 4.12506367648668e-28
0.0111407844387755 1.13351578544098e-24
0.0162228954081633 1.50034568493344e-22
0.021305006377551 5.18542357370991e-21
0.0263871173469388 8.36792687484548e-20
0.0314692283163265 8.26117382076329e-19
0.0365513392857143 5.78449403211552e-18
0.041633450255102 3.14248917074132e-17
0.0467155612244898 1.40450395181869e-16
0.0517976721938775 5.37719397712816e-16
0.0568797831632653 1.81541393244214e-15
0.0619618941326531 5.52259832147566e-15
0.0670440051020408 1.53886989158255e-14
0.0721261160714286 3.97855760387413e-14
0.0772082270408163 9.6415994003929e-14
0.0822903380102041 2.20829733009739e-13
0.0873724489795918 4.81272626860699e-13
0.0924545599489796 1.00366992973995e-12
0.0975366709183673 2.01235971739733e-12
0.102618781887755 3.89467895415835e-12
0.107700892857143 7.30085115729435e-12
0.112783003826531 1.3295053557204e-11
0.117865114795918 2.35793176154783e-11
0.122947225765306 4.08193498466497e-11
0.128029336734694 6.91108110277756e-11
0.133111447704082 1.14636478097694e-10
0.138193558673469 1.86579513000287e-10
0.143275669642857 2.98376322501563e-10
0.148357780612245 4.6941483219323e-10
0.153439891581633 7.27311115890922e-10
0.15852200255102 1.11093000425617e-09
0.163604113520408 1.67435442278524e-09
0.168686224489796 2.49205475874712e-09
0.173768335459184 3.66556461638432e-09
0.178850446428571 5.33203409224731e-09
0.183932557397959 7.67510751371129e-09
0.189014668367347 1.0938660890732e-08
0.194096779336735 1.54440367307775e-08
0.199178890306122 2.1611532105177e-08
0.20426100127551 2.99870318348552e-08
0.209343112244898 4.12748344542244e-08
0.214425223214286 5.6377896459645e-08
0.219507334183673 7.64469226849404e-08
0.224589445153061 1.02939960098377e-07
0.229671556122449 1.37694411716789e-07
0.234753667091837 1.83013679733195e-07
0.239835778061224 2.41770975631711e-07
0.244917889030612 3.17533203533642e-07
0.25 4.14708235046696e-07
};
\addlegendentry{$R^{13}$}
\end{axis}
      \end{tikzpicture}
    \end{subfigure}
    \begin{subfigure}[t]{.29\linewidth}
      \caption{Biharmonic 2D}
      \label{fig:m2m_accuracy_biharmonic_2d}
      \begin{tikzpicture}[scale=0.5]

\definecolor{crimson2143940}{RGB}{214,39,40}
\definecolor{darkgray176}{RGB}{176,176,176}
\definecolor{darkorange25512714}{RGB}{255,127,14}
\definecolor{forestgreen4416044}{RGB}{44,160,44}
\definecolor{lightgray204}{RGB}{204,204,204}
\definecolor{mediumpurple148103189}{RGB}{148,103,189}
\definecolor{sienna1408675}{RGB}{140,86,75}
\definecolor{steelblue31119180}{RGB}{31,119,180}

\begin{axis}[
legend cell align={left},
legend style={
  fill opacity=0.8,
  draw opacity=1,
  text opacity=1,
  at={(0.03,0.97)},
  anchor=north west,
  draw=lightgray204
},
log basis x={10},
log basis y={10},
tick align=outside,
tick pos=left,
x grid style={darkgray176},
xlabel={Geometric parameter \(\displaystyle R\)},
xmajorgrids,
xmin=0.000740095979741405, xmax=0.329876977693224,
xmode=log,
xtick style={color=black},
xtick={1e-05,0.0001,0.001,0.01,0.1,1,10},
xticklabels={
  \(\displaystyle {10^{-5}}\),
  \(\displaystyle {10^{-4}}\),
  \(\displaystyle {10^{-3}}\),
  \(\displaystyle {10^{-2}}\),
  \(\displaystyle {10^{-1}}\),
  \(\displaystyle {10^{0}}\),
  \(\displaystyle {10^{1}}\)
},
y grid style={darkgray176},
ymajorgrids,
ymin=1.8924172004632e-18, ymax=5.0235982374585e-16,
ymode=log,
ytick style={color=black},
ytick={1e-19,1e-18,1e-17,1e-16,1e-15,1e-14},
yticklabels={
  \(\displaystyle {10^{-19}}\),
  \(\displaystyle {10^{-18}}\),
  \(\displaystyle {10^{-17}}\),
  \(\displaystyle {10^{-16}}\),
  \(\displaystyle {10^{-15}}\),
  \(\displaystyle {10^{-14}}\)
}
]
\addplot [semithick, steelblue31119180, mark=*, mark size=3, mark options={solid}]
table {%
0.0009765625 2.43892595013927e-18
0.001953125 7.56081184612636e-18
0.00390625 9.40308084819582e-18
0.0078125 1.93531894245216e-17
0.015625 2.04852136282784e-17
0.03125 3.93521357853693e-17
0.0625 5.46186308076262e-17
0.125 8.86320804961201e-17
0.25 1.69807153525426e-16
};
\addlegendentry{$p=2$}
\addplot [semithick, darkorange25512714, mark=*, mark size=3, mark options={solid}]
table {%
0.0009765625 2.76345060186624e-18
0.001953125 6.09857787007868e-18
0.00390625 1.11826002444604e-17
0.0078125 2.14484597755625e-17
0.015625 2.44603885547393e-17
0.03125 4.1027682771531e-17
0.0625 5.79594031994239e-17
0.125 9.72872448370257e-17
0.25 2.00353410621669e-16
};
\addlegendentry{$p=4$}
\addplot [semithick, forestgreen4416044, mark=*, mark size=3, mark options={solid}]
table {%
0.0009765625 1.21867504681478e-16
0.001953125 1.13369870243881e-16
0.00390625 1.37015315507503e-16
0.0078125 1.42424456258893e-16
0.015625 1.52958435197658e-16
0.03125 1.48419462254825e-16
0.0625 1.69789224956555e-16
0.125 2.0126370414832e-16
0.25 3.12104651681353e-16
};
\addlegendentry{$p=6$}
\addplot [semithick, crimson2143940, mark=*, mark size=3, mark options={solid}]
table {%
0.0009765625 1.21847969527338e-16
0.001953125 1.13653724126886e-16
0.00390625 1.37193530844057e-16
0.0078125 1.69376047473382e-16
0.015625 1.81989953042048e-16
0.03125 1.95225696708723e-16
0.0625 2.02705133797464e-16
0.125 2.32698677426944e-16
0.25 3.26467161126921e-16
};
\addlegendentry{$p=8$}
\addplot [semithick, mediumpurple148103189, mark=*, mark size=3, mark options={solid}]
table {%
0.0009765625 1.21608914248998e-16
0.001953125 1.13458376584666e-16
0.00390625 1.37144676028405e-16
0.0078125 1.69440860487938e-16
0.015625 1.84740200573501e-16
0.03125 2.15661677172704e-16
0.0625 2.33645152998474e-16
0.125 2.69317290748516e-16
0.25 3.69853924413539e-16
};
\addlegendentry{$p=10$}
\addplot [semithick, sienna1408675, mark=*, mark size=3, mark options={solid}]
table {%
0.0009765625 1.21867504681478e-16
0.001953125 1.13369892677345e-16
0.00390625 1.37172002084091e-16
0.0078125 1.69672754982864e-16
0.015625 1.84659747018661e-16
0.03125 2.15329800707143e-16
0.0625 2.61768360361229e-16
0.125 2.88966906151082e-16
0.25 3.89792224410922e-16
};
\addlegendentry{$p=12$}
\end{axis}
      \end{tikzpicture}
    \end{subfigure}\vspace{5mm}
    \begin{subfigure}[t]{.29\linewidth}
      \caption{Laplace 3D}
      \label{fig:m2m_accuracy_laplace_3d}
      \begin{tikzpicture}[scale=0.5]

\definecolor{crimson2143940}{RGB}{214,39,40}
\definecolor{darkgray176}{RGB}{176,176,176}
\definecolor{darkorange25512714}{RGB}{255,127,14}
\definecolor{forestgreen4416044}{RGB}{44,160,44}
\definecolor{lightgray204}{RGB}{204,204,204}
\definecolor{mediumpurple148103189}{RGB}{148,103,189}
\definecolor{sienna1408675}{RGB}{140,86,75}
\definecolor{steelblue31119180}{RGB}{31,119,180}

\begin{axis}[
legend cell align={left},
legend style={
  fill opacity=0.8,
  draw opacity=1,
  text opacity=1,
  at={(0.03,0.97)},
  anchor=north west,
  draw=lightgray204
},
log basis x={10},
log basis y={10},
tick align=outside,
tick pos=left,
x grid style={darkgray176},
xlabel={Geometric parameter \(\displaystyle R\)},
xmajorgrids,
xmin=0.000740095979741405, xmax=0.329876977693224,
xmode=log,
xtick style={color=black},
xtick={1e-05,0.0001,0.001,0.01,0.1,1,10},
xticklabels={
  \(\displaystyle {10^{-5}}\),
  \(\displaystyle {10^{-4}}\),
  \(\displaystyle {10^{-3}}\),
  \(\displaystyle {10^{-2}}\),
  \(\displaystyle {10^{-1}}\),
  \(\displaystyle {10^{0}}\),
  \(\displaystyle {10^{1}}\)
},
y grid style={darkgray176},
ylabel={\(\displaystyle \epsilon_{\mathrm{rel}}\)},
ymajorgrids,
ymin=1.04008535485212e-16, ymax=1.27643210914453e-15,
ymode=log,
ytick style={color=black},
ytick={1e-15,2e-16},
yticklabels={
\(\displaystyle {1e-15}\),
\(\displaystyle {2e-16}\),
  \(\displaystyle {10^{-17}}\),
  \(\displaystyle {10^{-16}}\),
  \(\displaystyle {10^{-15}}\),
  \(\displaystyle {10^{-14}}\),
  \(\displaystyle {10^{-13}}\)
}
]
\addplot [semithick, steelblue31119180, mark=*, mark size=3, mark options={solid}]
table {%
0.0009765625 1.19050780222879e-16
0.001953125 1.19510362515864e-16
0.00390625 1.19609495373526e-16
0.0078125 1.19724216114948e-16
0.015625 1.20581662862436e-16
0.03125 1.19733454532944e-16
0.0625 1.19996260810186e-16
0.125 1.19225350660655e-16
0.25 1.16564348888522e-16
};
\addlegendentry{$p=2$}
\addplot [semithick, darkorange25512714, mark=*, mark size=3, mark options={solid}]
table {%
0.0009765625 3.32644378335458e-16
0.001953125 3.32666359216135e-16
0.00390625 3.31942198717878e-16
0.0078125 3.29997527791829e-16
0.015625 3.27274204368779e-16
0.03125 3.2340588758681e-16
0.0625 3.13645485796694e-16
0.125 2.95834391705098e-16
0.25 2.69270206859737e-16
};
\addlegendentry{$p=4$}
\addplot [semithick, forestgreen4416044, mark=*, mark size=3, mark options={solid}]
table {%
0.0009765625 4.43083218537104e-16
0.001953125 4.80427055802178e-16
0.00390625 5.36756061259921e-16
0.0078125 5.31668445637968e-16
0.015625 5.26180677662674e-16
0.03125 5.18459417232839e-16
0.0625 5.02241123010829e-16
0.125 4.70116750462004e-16
0.25 4.18319497587596e-16
};
\addlegendentry{$p=6$}
\addplot [semithick, crimson2143940, mark=*, mark size=3, mark options={solid}]
table {%
0.0009765625 4.43133147005958e-16
0.001953125 4.80464531701435e-16
0.00390625 5.54626241228445e-16
0.0078125 6.64178182016589e-16
0.015625 7.33279752436573e-16
0.03125 7.21709121407055e-16
0.0625 7.00206039124155e-16
0.125 6.54299735211421e-16
0.25 5.77073722617944e-16
};
\addlegendentry{$p=8$}
\addplot [semithick, mediumpurple148103189, mark=*, mark size=3, mark options={solid}]
table {%
0.0009765625 4.43147099808085e-16
0.001953125 4.8045775916496e-16
0.00390625 5.54600618174336e-16
0.0078125 6.65768164596712e-16
0.015625 8.18972964904484e-16
0.03125 9.328018573808e-16
0.0625 9.1181965740097e-16
0.125 8.50817480119913e-16
0.25 7.38650727762442e-16
};
\addlegendentry{$p=10$}
\addplot [semithick, sienna1408675, mark=*, mark size=3, mark options={solid}]
table {%
0.0009765625 4.43083218537104e-16
0.001953125 4.80427055802178e-16
0.00390625 5.54643639992397e-16
0.0078125 6.65695708434572e-16
0.015625 8.20293074607445e-16
0.03125 1.04455675881799e-15
0.0625 1.13894029850748e-15
0.125 1.05663489774369e-15
0.25 9.12794559573343e-16
};
\addlegendentry{$p=12$}
\end{axis}
      \end{tikzpicture}
    \end{subfigure}\hspace{3mm}
    \begin{subfigure}[t]{.29\linewidth}
      \caption{Helmholtz 3D}
      \label{fig:m2m_accuracy_helmholtz_3d}
      \begin{tikzpicture}[scale=0.5]

\definecolor{crimson2143940}{RGB}{214,39,40}
\definecolor{darkgray176}{RGB}{176,176,176}
\definecolor{darkorange25512714}{RGB}{255,127,14}
\definecolor{forestgreen4416044}{RGB}{44,160,44}
\definecolor{gray}{RGB}{128,128,128}
\definecolor{lightgray204}{RGB}{204,204,204}
\definecolor{mediumpurple148103189}{RGB}{148,103,189}
\definecolor{sienna1408675}{RGB}{140,86,75}
\definecolor{steelblue31119180}{RGB}{31,119,180}

\begin{axis}[
legend cell align={left},
legend style={
  fill opacity=0.8,
  draw opacity=1,
  text opacity=1,
  at={(0.03,0.97)},
  anchor=north west,
  draw=lightgray204
},
log basis x={10},
log basis y={10},
tick align=outside,
tick pos=left,
x grid style={darkgray176},
xlabel={Geometric parameter \(\displaystyle R\)},
xmajorgrids,
xmin=0.000740095979741405, xmax=0.329876977693224,
xmode=log,
xtick style={color=black},
xtick={1e-05,0.0001,0.001,0.01,0.1,1,10},
xticklabels={
  \(\displaystyle {10^{-5}}\),
  \(\displaystyle {10^{-4}}\),
  \(\displaystyle {10^{-3}}\),
  \(\displaystyle {10^{-2}}\),
  \(\displaystyle {10^{-1}}\),
  \(\displaystyle {10^{0}}\),
  \(\displaystyle {10^{1}}\)
},
y grid style={darkgray176},
ymajorgrids,
ymin=1e-16, ymax=0.0207011687891807,
ymode=log,
ytick style={color=black},
ytick={1e-18,1e-16,1e-14,1e-12,1e-10,1e-08,1e-06,0.0001,0.01,1,100},
yticklabels={
  \(\displaystyle {10^{-18}}\),
  \(\displaystyle {10^{-16}}\),
  \(\displaystyle {10^{-14}}\),
  \(\displaystyle {10^{-12}}\),
  \(\displaystyle {10^{-10}}\),
  \(\displaystyle {10^{-8}}\),
  \(\displaystyle {10^{-6}}\),
  \(\displaystyle {10^{-4}}\),
  \(\displaystyle {10^{-2}}\),
  \(\displaystyle {10^{0}}\),
  \(\displaystyle {10^{2}}\)
}
]
\addplot [semithick, steelblue31119180, mark=*, mark size=3, mark options={solid}]
table {%
0.0009765625 3.2129442155645e-10
0.001953125 2.56937840859823e-09
0.00390625 2.0539932789256e-08
0.0078125 1.64076180448946e-07
0.015625 1.30867070173213e-06
0.03125 1.04047912460381e-05
0.0625 8.21510984122573e-05
0.125 0.000637914616199766
0.25 0.00473515760736073
};
\addlegendentry{$p=2$}
\addplot [semithick, darkorange25512714, mark=*, mark size=3, mark options={solid}]
table {%
0.0009765625 7.67823254993829e-15
0.001953125 8.31008686215028e-15
0.00390625 2.11129646638593e-13
0.0078125 6.78551358527985e-12
0.015625 2.17439992190987e-10
0.03125 6.9745445644927e-09
0.0625 2.23880998919165e-07
0.125 7.16107770914354e-06
0.25 0.00022242601324075
};
\addlegendentry{$p=4$}
\addplot [semithick, forestgreen4416044, mark=*, mark size=3, mark options={solid}]
table {%
0.0009765625 7.70521556813754e-15
0.001953125 3.75478676628523e-15
0.00390625 3.56422761181361e-15
0.0078125 1.2679840851251e-15
0.015625 1.23790105880246e-13
0.03125 1.59289291492884e-11
0.0625 2.0725467971887e-09
0.125 2.71217430088107e-07
0.25 3.48457132740382e-05
};
\addlegendentry{$p=6$}
\addplot [semithick, crimson2143940, mark=*, mark size=3, mark options={solid}]
table {%
0.0009765625 7.69989034237915e-15
0.001953125 3.75180136747461e-15
0.00390625 3.57437348638715e-15
0.0078125 7.46851610508107e-16
0.015625 2.95152326196327e-15
0.03125 5.66136186184008e-14
0.0625 2.95106609105937e-11
0.125 1.57912366117771e-08
0.25 8.38556944808666e-06
};
\addlegendentry{$p=8$}
\addplot [semithick, mediumpurple148103189, mark=*, mark size=3, mark options={solid}]
table {%
0.0009765625 7.72358019517743e-15
0.001953125 3.74595079184805e-15
0.00390625 3.5743388823254e-15
0.0078125 7.41546407584025e-16
0.015625 2.95882073773857e-15
0.03125 1.80159929482699e-14
0.0625 6.50910666291884e-13
0.125 1.41541467622402e-09
0.25 3.07405120089717e-06
};
\addlegendentry{$p=10$}
\addplot [semithick, sienna1408675, mark=*, mark size=3, mark options={solid}]
table {%
0.0009765625 7.72124005809696e-15
0.001953125 3.75077708887629e-15
0.00390625 3.56317253403976e-15
0.0078125 7.27990793641112e-16
0.015625 2.93429838366398e-15
0.03125 1.81050014563291e-14
0.0625 1.81629269504637e-14
0.125 1.59753283088551e-10
0.25 1.40150762772345e-06
};
\addlegendentry{$p=12$}
\addplot [semithick, gray, dashed]
table {%
0.0009765625 2.82237387142225e-10
0.00605867346938775 6.73980068893174e-08
0.0111407844387755 4.19046026087078e-07
0.0162228954081633 1.29389643863955e-06
0.021305006377551 2.93061938820586e-06
0.0263871173469388 5.56788501844513e-06
0.0314692283163265 9.44436347301649e-06
0.0365513392857143 1.47987248955791e-05
0.041633450255102 2.1869639429792e-05
0.0467155612244898 3.08957772193143e-05
0.0517976721938775 4.21158084078053e-05
0.0568797831632653 5.5768403138924e-05
0.0619618941326531 7.20922315563295e-05
0.0670440051020408 9.1325963803681e-05
0.0721261160714286 0.000113708270024638
0.0772082270408163 0.000139477820362858
0.0822903380102041 0.000168873284962003
0.0873724489795918 0.000202133333965729
0.0924545599489796 0.000239496637517697
0.0975366709183673 0.000281201865761566
0.102618781887755 0.000327487688840995
0.107700892857143 0.000378592776899643
0.112783003826531 0.000434755800081169
0.117865114795918 0.000496215428529232
0.122947225765306 0.000563210332387491
0.128029336734694 0.000635979181799606
0.133111447704082 0.000714760646909236
0.138193558673469 0.000799793397860039
0.143275669642857 0.000891316104795675
0.148357780612245 0.000989567437859803
0.153439891581633 0.00109478606719608
0.15852200255102 0.00120721066294817
0.163604113520408 0.00132707989525973
0.168686224489796 0.00145463243427442
0.173768335459184 0.00159010695013589
0.178850446428571 0.00173374211298781
0.183932557397959 0.00188577659297384
0.189014668367347 0.00204644906023763
0.194096779336735 0.00221599818492285
0.199178890306122 0.00239466263717315
0.20426100127551 0.00258268108713219
0.209343112244898 0.00278029220494364
0.214425223214286 0.00298773466075114
0.219507334183673 0.00320524712469836
0.224589445153061 0.00343306826692897
0.229671556122449 0.00367143675758661
0.234753667091837 0.00392059126681495
0.239835778061224 0.00418077046475764
0.244917889030612 0.00445221302155835
0.25 0.00473515760736073
};
\addlegendentry{$R^{3}$}
\addplot [semithick, red, dashed]
table {%
0.0009765625 6.90996609925045e-38
0.00605867346938775 1.39406641075985e-27
0.0111407844387755 3.83071973302287e-24
0.0162228954081633 5.07042239327466e-22
0.021305006377551 1.75241533139874e-20
0.0263871173469388 2.82794320252826e-19
0.0314692283163265 2.79186597836556e-18
0.0365513392857143 1.95487135856316e-17
0.041633450255102 1.06200508469198e-16
0.0467155612244898 4.74652499104575e-16
0.0517976721938775 1.81722419229151e-15
0.0568797831632653 6.13519640743711e-15
0.0619618941326531 1.86636363069311e-14
0.0670440051020408 5.20061505623115e-14
0.0721261160714286 1.34455464298626e-13
0.0772082270408163 3.25838118492702e-13
0.0822903380102041 7.46294693681288e-13
0.0873724489795918 1.62646217402416e-12
0.0924545599489796 3.39190530443364e-12
0.0975366709183673 6.80077523258777e-12
0.102618781887755 1.31620783010791e-11
0.107700892857143 2.46732466855148e-11
0.112783003826531 4.4930670280312e-11
0.117865114795918 7.96863691189679e-11
0.122947225765306 1.37949105742619e-10
0.128029336734694 2.33560177078892e-10
0.133111447704082 3.87414294898629e-10
0.138193558673469 6.30545980398447e-10
0.143275669642857 1.0083636074189e-09
0.148357780612245 1.5863887241382e-09
0.153439891581633 2.45794992842234e-09
0.15852200255102 3.75439102302019e-09
0.163604113520408 5.65848540427892e-09
0.168686224489796 8.42190595201307e-09
0.173768335459184 1.23877857626756e-08
0.178850446428571 1.80196239670153e-08
0.183932557397959 2.59380470774898e-08
0.189014668367347 3.6967234744482e-08
0.194096779336735 5.21931648610464e-08
0.199178890306122 7.30362325425855e-08
0.20426100127551 1.01341257051727e-07
0.209343112244898 1.39488417234117e-07
0.214425223214286 1.90529257067437e-07
0.219507334183673 2.58352586721275e-07
0.224589445153061 3.47885879958901e-07
0.229671556122449 4.65338645359293e-07
0.234753667091837 6.18495237006634e-07
0.239835778061224 8.17065681060877e-07
0.244917889030612 1.07310433986861e-06
0.25 1.40150762772345e-06
};
\addlegendentry{$R^{13}$}
\end{axis}
      \end{tikzpicture}
    \end{subfigure}
    \begin{subfigure}[t]{.29\linewidth}
      \caption{Biharmonic 3D}
      \label{fig:m2m_accuracy_biharmonic_3d}
      \begin{tikzpicture}[scale=0.5 ]

\definecolor{crimson2143940}{RGB}{214,39,40}
\definecolor{darkgray176}{RGB}{176,176,176}
\definecolor{darkorange25512714}{RGB}{255,127,14}
\definecolor{forestgreen4416044}{RGB}{44,160,44}
\definecolor{lightgray204}{RGB}{204,204,204}
\definecolor{mediumpurple148103189}{RGB}{148,103,189}
\definecolor{sienna1408675}{RGB}{140,86,75}
\definecolor{steelblue31119180}{RGB}{31,119,180}

\begin{axis}[
legend cell align={left},
legend style={
  fill opacity=0.8,
  draw opacity=1,
  text opacity=1,
  at={(0.03,0.97)},
  anchor=north west,
  draw=lightgray204
},
log basis x={10},
log basis y={10},
tick align=outside,
tick pos=left,
x grid style={darkgray176},
xlabel={Geometric parameter \(\displaystyle R\)},
xmajorgrids,
xmin=0.000740095979741405, xmax=0.329876977693224,
xmode=log,
xtick style={color=black},
xtick={1e-05,0.0001,0.001,0.01,0.1,1,10},
xticklabels={
  \(\displaystyle {10^{-5}}\),
  \(\displaystyle {10^{-4}}\),
  \(\displaystyle {10^{-3}}\),
  \(\displaystyle {10^{-2}}\),
  \(\displaystyle {10^{-1}}\),
  \(\displaystyle {10^{0}}\),
  \(\displaystyle {10^{1}}\)
},
y grid style={darkgray176},
ymajorgrids,
ymin=8.87159028603855e-17, ymax=1.60657770328882e-15,
ymode=log,
ytick style={color=black},
ytick={1e-18,1e-17,1e-16,1e-15,1e-14,1e-13},
yticklabels={
  \(\displaystyle {10^{-18}}\),
  \(\displaystyle {10^{-17}}\),
  \(\displaystyle {10^{-16}}\),
  \(\displaystyle {10^{-15}}\),
  \(\displaystyle {10^{-14}}\),
  \(\displaystyle {10^{-13}}\)
}
]
\addplot [semithick, steelblue31119180, mark=*, mark size=3, mark options={solid}]
table {%
0.0009765625 1.01199591908718e-16
0.001953125 1.01607654769737e-16
0.00390625 1.01953882359915e-16
0.0078125 1.01967956889301e-16
0.015625 1.04208810554207e-16
0.03125 1.06469918027011e-16
0.0625 1.11448069605364e-16
0.125 1.20411294800262e-16
0.25 1.39310904972216e-16
};
\addlegendentry{$p=2$}
\addplot [semithick, darkorange25512714, mark=*, mark size=3, mark options={solid}]
table {%
0.0009765625 1.79400370415855e-16
0.001953125 1.85611563758366e-16
0.00390625 1.84977405091858e-16
0.0078125 1.87541649516415e-16
0.015625 1.89130534317081e-16
0.03125 1.93399270663745e-16
0.0625 2.02548034523965e-16
0.125 2.15889836083631e-16
0.25 2.49822455830019e-16
};
\addlegendentry{$p=4$}
\addplot [semithick, forestgreen4416044, mark=*, mark size=3, mark options={solid}]
table {%
0.0009765625 2.17694101328855e-16
0.001953125 3.15769377564083e-16
0.00390625 4.05952771577406e-16
0.0078125 4.46619654552834e-16
0.015625 4.48159175024014e-16
0.03125 4.50639646686508e-16
0.0625 4.59486028068544e-16
0.125 4.76585535989521e-16
0.25 5.3631345125553e-16
};
\addlegendentry{$p=6$}
\addplot [semithick, crimson2143940, mark=*, mark size=3, mark options={solid}]
table {%
0.0009765625 2.17679941349829e-16
0.001953125 3.15776418563092e-16
0.00390625 4.06067777756363e-16
0.0078125 5.15990800795285e-16
0.015625 6.60730870716089e-16
0.03125 6.96456302570407e-16
0.0625 7.06976362297651e-16
0.125 7.40342396842758e-16
0.25 8.28698264796651e-16
};
\addlegendentry{$p=8$}
\addplot [semithick, mediumpurple148103189, mark=*, mark size=3, mark options={solid}]
table {%
0.0009765625 2.17679563737779e-16
0.001953125 3.15764031515535e-16
0.00390625 4.060758167932e-16
0.0078125 5.15946837814622e-16
0.015625 6.72286674967587e-16
0.03125 8.92392381521992e-16
0.0625 9.61365400446634e-16
0.125 1.00179759396002e-15
0.25 1.1216543768692e-15
};
\addlegendentry{$p=10$}
\addplot [semithick, sienna1408675, mark=*, mark size=3, mark options={solid}]
table {%
0.0009765625 2.17694101328855e-16
0.001953125 3.15769377564083e-16
0.00390625 4.06032668704883e-16
0.0078125 5.15932559123282e-16
0.015625 6.72330517111352e-16
0.03125 9.16859632712286e-16
0.0625 1.2062037061403e-15
0.125 1.2620482396176e-15
0.25 1.40839492308619e-15
};
\addlegendentry{$p=12$}
\end{axis}
      \end{tikzpicture}
    \end{subfigure}
  \caption{Comparison of M2M translation error between compressed and uncompressed representation.}
  \label{fig:m2m_accuracy}
\end{figure}
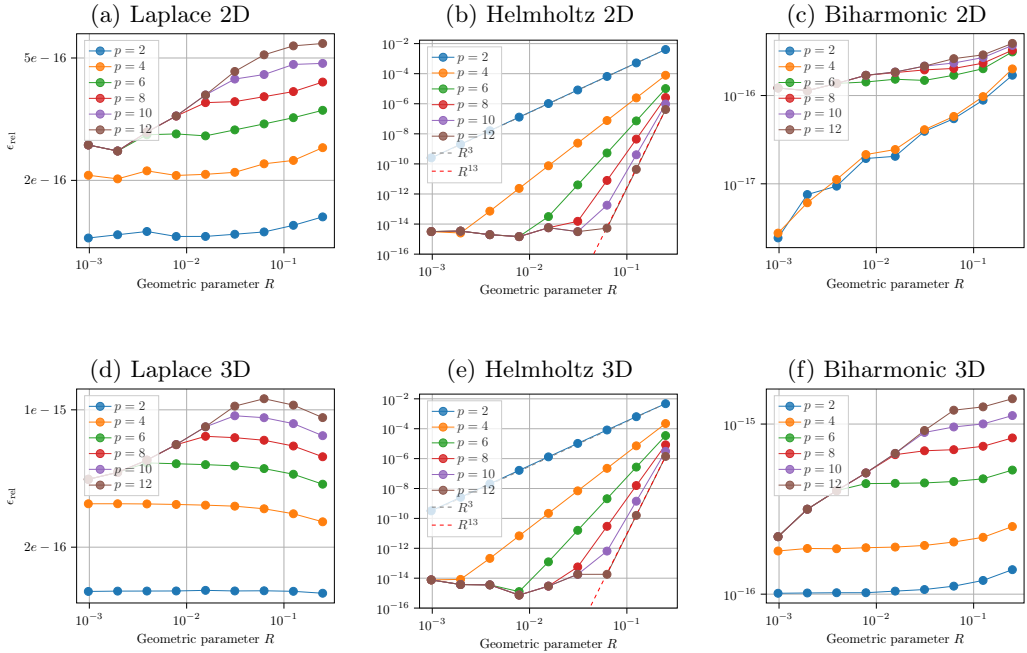
As shown in Section~\ref{sec:translations}, our compression approach introduces no
additional numerical error when compared to uncompressed Cartesian Taylor
expansions \emph{except} in the case of the multipole-to-multipole
translation for PDEs with terms of varying order, such as Helmholtz or
Yukawa. The details are in Theorems~\ref{thm:truncation_equal} and \ref{thm:varying},
particularly a bound of the additional error.

As numerical confirmation of these expectations,
we have conducted a collection of numerical experiments to calculate the relative
error between the compressed Taylor series translation and uncompressed Taylor series.
In our numerical experiments, we consider an arrangement of sources, targets and the
expansion centers of the multipole expansions as illustrated in Figure~\ref{fig:experiment_points}.
As we change a single geometric parameter $R$, we consider the error measure
\[
  \epsilon_{\text{rel}} = \sqrt{\frac{\sum_{\b x}|\epsilon(\b x)|^2}{
    \sum_{\b x}\left|\sum_{n=1}^{N(p)} \b \rho_{n} \D x^{\bnu(n)} G(\b x - \b c_2)\right|^2}},
\] where $\epsilon(\b x)$ is defined as in Theorem~\ref{thm:varying}.
$\epsilon_{\text{rel}}$ measures the relative error in the evaluated
potential when comparing compressed and uncompressed expansions in the
setting of forming a multipole expansion, translating it to another
multipole expansion, and evaluating the translated expansion.
For the purposes of our experiment,
the sources are arranged in a uniform grid centered at $\b c_1=[R,R,R]^T$ with side
length $2R$ and $50$ grid points per dimension.
Each source is assigned strength from a uniform distribution on $(0, 1)$.
The first multipole expansion is formed about center $\b
c_1$ located in the middle of the source grid.
The translated multipole expansion is centered at $\b
c_2=[0,0,0]^T$. The target points (at which error is measured) are
once again located in a uniform grid centered at $[1, 1, 1]^T$
with side length $1$ having 50 grid points per dimension.
Figure~\ref{fig:experiment_points} shows the geometric setup
for a specific $R$, noting that $R$ is varied as part
of the experiment. 
An analogous two-dimensional experiment is constructed from the
three-dimensional one by dropping the last dimension. We perform this
calculation for different values of the Taylor series order and for
Laplace, Helmholtz and biharmonic kernels/PDEs in two and three
dimensions. All calculations were performed using double-precision
floating-point arithmetic. Figure~\ref{fig:m2m_accuracy} summarizes
the error $\epsilon_{\text{rel}}$ for the three kernels.

For PDEs with only terms of equal order, our techniques introduce no additional
error, cf. Theorem~\ref{thm:truncation_equal}.
As numerical examples of this case, we use the Laplace and biharmonic PDEs which have only terms of equal order.
We observe an error less than $10^{-14}$ in
Figures~\ref{fig:m2m_accuracy_laplace_2d},~\ref{fig:m2m_accuracy_biharmonic_2d},~\ref{fig:m2m_accuracy_laplace_3d}
and~\ref{fig:m2m_accuracy_biharmonic_3d}. While we observe some incipient growth of the error
as $R$ increases in Figure~\ref{fig:m2m_accuracy_laplace_3d} near
the level of machine epsilon, a tolerance of $10^{-14}$ is
maintained in our experiments. We hypothesize that this incipient
error growth is the result of amplification of round-off error.

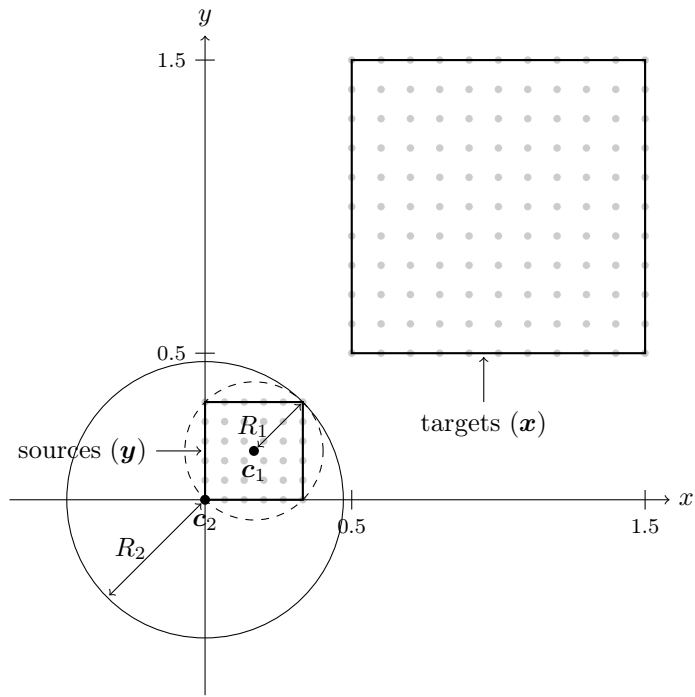
\begin{figure}
  \centering
  \begin{tikzpicture}[scale=1.3]
  \foreach \i in {0,0.2,...,1.0}{
    \foreach \j in {0,0.2,...,1.0} {
      \draw (\i,\j) node[fill=black!20,circle,scale=0.3] {};
    }
  }
  \draw[thick]
      (0.5-0.5, 0.5-0.5) node[anchor=north]{}
   -- (0.5-0.5, 0.5+0.5) node{}
   -- (0.5+0.5, 0.5+0.5) node{}
   -- (0.5+0.5, 0.5-0.5) node{}
   -- (0.5-0.5, 0.5-0.5) node{}
   -- cycle;
  \draw (0.5,0.5) node[anchor=center,fill=black,circle,scale=0.4,label=below:$\b c_1$] {};
  \draw (0,0) node[anchor=center,fill=black,circle,scale=0.4,label=below:$\b c_2$] {};
  \foreach \i in {0,0.3,0.6,0.9,1.2,1.5,1.8,2.1,2.4,2.7,3.0}{
    \foreach \j in {0,0.3,0.6,0.9,1.2,1.5,1.8,2.1,2.4,2.7,3.0} {
      \draw (1.5+\i,1.5+\j) node[fill=black!20,circle,scale=0.3] {};
    }
  }
  \draw[thick]
      (3-1.5, 3-1.5) node[anchor=north]{}
   -- (3-1.5, 3+1.5) node{}
   -- (3+1.5, 3+1.5) node{}
   -- (3+1.5, 3-1.5) node{}
   -- (3-1.5, 3-1.5) node{}
   -- cycle;
  \draw[dashed] (0.5, 0.5) circle (0.707);
  \draw (0, 0) circle (1.414);
  \draw[<->,shorten >=0.7mm,shorten <=0.4mm] (1.0,1.0)--(0.5,0.5) node[midway, left]{$R_1$};
  \draw[<->,shorten >=0.4mm,shorten <=0.7mm] (-0.0,-0.0)--(-1.0,-1.0) node[midway, left]{$R_2$};
  \draw[->,shorten >=0.5mm] (2.85, 1.0) node[below] {targets ($\b x$)} -- (2.85, 1.5);
  \draw[->,shorten >=0.5mm] (-0.5,0.5) node[left] {sources ($\b y$)} -- (0, 0.5);

  \draw[->] (-2,0) -- (4.75,0) node [right] {$x$};
  \draw[->] (0,-2) -- (0,4.75) node [above] {$y$};
  \draw (0.1,4.5) -- ++ (-0.2,0) node[left] {\footnotesize 1.5};
  \draw (4.5,0.1) -- ++ (0,-0.2) node[below] {\footnotesize 1.5};
  \draw (0.1,1.5) -- ++ (-0.2,0) node[left] {\footnotesize 0.5};
  \draw (1.5,0.1) -- ++ (0,-0.2) node[below] {\footnotesize 0.5};
  \end{tikzpicture}
  \caption{Geometric setting of a numerical experiment to calculate the error in compressed multipole expansion.
  Targets are in a box of size $1$ centered at $(1, 1)^T$ and
  sources are in a box of size $2R$ centered at $(R, R)^T$.
  The initial multipole expansion is centered at $\b c_1$ with radius $R_1 = \sqrt{2}R$.
  It is then translated to a multipole expansion centered at $\b c_2$ with radius
  $R_2 = 2 \sqrt{2}R$.}
  \label{fig:experiment_points}
\end{figure}

For PDEs with terms of varying order, we use the example of the Helmholtz equation
\[\triangle u+\kappa^2u=0\] with $\kappa=1$.
The classical upper bound of truncation error for uncompressed expansions for the
chain of translations performed is
$\bigO{R^{p+1}}$ as $R\to 0$, cf.~\citep{shanker2007accelerated}.
Under the additional assumption~\eqref{eq:assumption_v}, Theorem~\ref{thm:varying}
shows that the added error introduced by our compressed translation
method is asymptotically identical to this estimate. The experimental data in
Figures~\ref{fig:m2m_accuracy_helmholtz_2d} and~\ref{fig:m2m_accuracy_helmholtz_3d}
show no disagreement with this claim.

Next, we numerically examine to what extent the terms bounded with the
help of assumption~\eqref{eq:assumption_v} contribute to the observed
error. We use the same experimental setup as before in the case of the
Helmholtz equation but with the geometric parameter $R$ fixed at $10^{-2}$ and
varying the wavenumber $\kappa$. As above, the setup is illustrated in
Figure~\ref{fig:experiment_points} for two dimensions.
When $\kappa \ge 2$, \eqref{eq:assumption_helmholtz} does not hold because
$\max_{\b x} \norm{x - \b c_2} \kappa = 0.5 \sqrt{d} \kappa \ge \sqrt{d} > 1$.
Recall that, if \eqref{eq:assumption_helmholtz} holds,
Theorem~\ref{thm:varying} shows that the error in compressed
multipole-to-multipole translation obeys an error bound that is
asymptotically identical to that of the Taylor series truncation
error. In this experiment, we directly compare these two errors
by considering the truncation error in the uncompressed translated
expansion
\[
  \epsilon_{\text{trunc}} = \sqrt{\frac{
    \sum_{\b x}\left|\sum_{n=1}^{N(p)} \b \rho_{n} \D x^{\bnu(n)} G(\b x - \b c_2)
          - \sum_{\b y}G(\b x - \b y)\right|^2}
        {\sum_{\b x}\left|\sum_{\b y}G(\b x - \b y)\right|^2}}.
\]
and comparing it to the additional error $\epsilon_{\text{rel}}$ introduced
by performing the multipole-to-multipole translation in compressed
form. We show both $\epsilon_{\text{rel}}$ and $\epsilon_{\text{trunc}}$ in
Figure~\ref{fig:m2m_accuracy_compare} for Helmholtz equation in two and three
dimensions.

We observe that the multipole translation error in compressed representation
$\epsilon_{\text{rel}}$ behaves very similarly to the Taylor series truncation error
$\epsilon_{\text{trunc}}$. In particular, we note no change in the
behavior of the error after \eqref{eq:assumption_helmholtz} ceases to
hold. This suggests that Theorem~\ref{thm:varying} may hold in a more
general setting than stated.

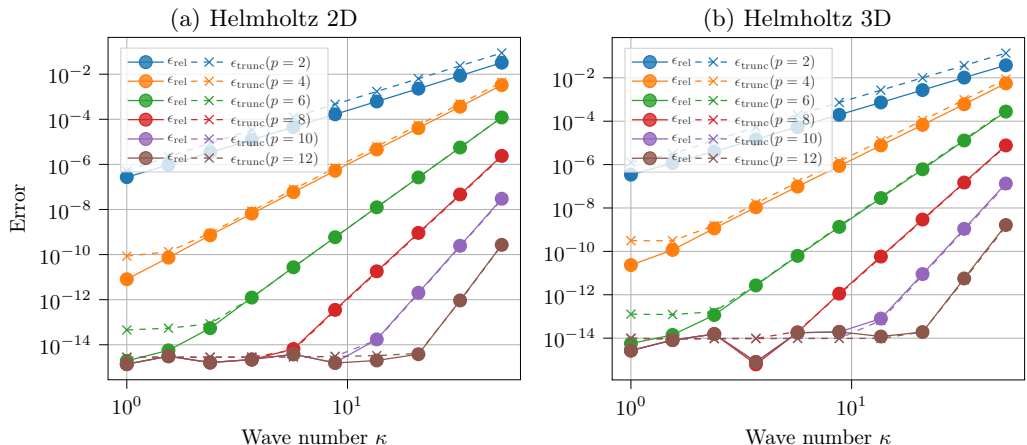
\begin{figure}[ht!]
  \centering
    \begin{subfigure}[t]{.45\linewidth}
      \caption{Helmholtz 2D}
      \label{fig:m2m_accuracy_compare_helmholtz_2d}
      \begin{tikzpicture}[scale=0.8]

\definecolor{crimson2143940}{RGB}{214,39,40}
\definecolor{darkgray176}{RGB}{176,176,176}
\definecolor{darkorange25512714}{RGB}{255,127,14}
\definecolor{forestgreen4416044}{RGB}{44,160,44}
\definecolor{lightgray204}{RGB}{204,204,204}
\definecolor{mediumpurple148103189}{RGB}{148,103,189}
\definecolor{sienna1408675}{RGB}{140,86,75}
\definecolor{steelblue31119180}{RGB}{31,119,180}

\begin{axis}[
legend cell align={left},
legend columns=2,
legend style={
nodes={scale=0.7, transform shape},
  fill opacity=0.8,
  draw opacity=1,
  text opacity=1,
  at={(0.03,0.97)},
  anchor=north west,
  draw=lightgray204
},
log basis x={10},
log basis y={10},
tick align=outside,
tick pos=left,
x grid style={darkgray176},
xlabel={Wave number \(\displaystyle \kappa\)},
xmajorgrids,
xmin=0.822340159426889, xmax=60.8020895329329,
xmode=log,
xtick style={color=black},
xtick={0.01,0.1,1,10,100,1000},
xticklabels={
  \(\displaystyle {10^{-2}}\),
  \(\displaystyle {10^{-1}}\),
  \(\displaystyle {10^{0}}\),
  \(\displaystyle {10^{1}}\),
  \(\displaystyle {10^{2}}\),
  \(\displaystyle {10^{3}}\)
},
y grid style={darkgray176},
ylabel={Error},
ymajorgrids,
ymin=2.81203705298917e-16, ymax=0.428403133031924,
ymode=log,
ytick style={color=black},
ytick={1e-18,1e-16,1e-14,1e-12,1e-10,1e-08,1e-06,0.0001,0.01,1,100},
yticklabels={
  \(\displaystyle {10^{-18}}\),
  \(\displaystyle {10^{-16}}\),
  \(\displaystyle {10^{-14}}\),
  \(\displaystyle {10^{-12}}\),
  \(\displaystyle {10^{-10}}\),
  \(\displaystyle {10^{-8}}\),
  \(\displaystyle {10^{-6}}\),
  \(\displaystyle {10^{-4}}\),
  \(\displaystyle {10^{-2}}\),
  \(\displaystyle {10^{0}}\),
  \(\displaystyle {10^{2}}\)
}
]
\addplot [semithick, steelblue31119180, mark=*, mark size=3, mark options={solid}]
table {%
1 2.70849989102174e-07
1.54445210494638 9.46462925986828e-07
2.3853323044733 3.39088846873557e-06
3.68403149864039 1.23269889282064e-05
5.68981020276391 4.51655308806213e-05
8.7876393444041 0.0001662355441048
13.5720880829745 0.000614256386329303
20.9614400082677 0.00228298682998845
32.3739401434763 0.00856653834014858
50 0.0323039480991736
};
\addlegendentry{$\epsilon_{\mathrm{rel}}$}
\addplot [semithick, steelblue31119180, dashed, mark=x, mark size=3, mark options={solid}]
table {%
1 5.13022352097543e-07
1.54445210494638 2.16432778558268e-06
2.3853323044733 8.75247168022641e-06
3.68403149864039 3.34062834294495e-05
5.68981020276391 0.000124763929471246
8.7876393444041 0.000462296550251917
13.5720880829745 0.00170854569281593
20.9614400082677 0.00631508056903984
32.3739401434763 0.0234071367807142
50 0.0874427745392865
};
\addlegendentry{$\epsilon_{\mathrm{trunc}} (p=2)$}
\addplot [semithick, darkorange25512714, mark=*, mark size=3, mark options={solid}]
table {%
1 8.06893263082723e-12
1.54445210494638 7.34662279411573e-11
2.3853323044733 7.16100746077109e-10
3.68403149864039 6.56585904461241e-09
5.68981020276391 5.8682286614653e-08
8.7876393444041 5.19609445632843e-07
13.5720880829745 4.58932560777491e-06
20.9614400082677 4.06028395683977e-05
32.3739401434763 0.000361842914867438
50 0.00329010111791098
};
\addlegendentry{$\epsilon_{\mathrm{rel}}$}
\addplot [semithick, darkorange25512714, dashed, mark=x, mark size=3, mark options={solid}]
table {%
1 8.5021901394592e-11
1.54445210494638 1.34180229764119e-10
2.3853323044733 8.94235460335468e-10
3.68403149864039 8.5237634520914e-09
5.68981020276391 7.8211977248799e-08
8.7876393444041 7.00687404614951e-07
13.5720880829745 6.2148149293102e-06
20.9614400082677 5.49725259276191e-05
32.3739401434763 0.00048748720276536
50 0.0043662349468295
};
\addlegendentry{$\epsilon_{\mathrm{trunc}} (p=4)$}
\addplot [semithick, forestgreen4416044, mark=*, mark size=3, mark options={solid}]
table {%
1 1.93746473612435e-15
1.54445210494638 5.66283992349278e-15
2.3853323044733 5.43866206104806e-14
3.68403149864039 1.23910224289971e-12
5.68981020276391 2.74086759767968e-11
8.7876393444041 5.88547732941532e-10
13.5720880829745 1.24820895792529e-08
20.9614400082677 2.63882753846715e-07
32.3739401434763 5.59918348501865e-06
50 0.000120414481282849
};
\addlegendentry{$\epsilon_{\mathrm{rel}}$}
\addplot [semithick, forestgreen4416044, dashed, mark=x, mark size=3, mark options={solid}]
table {%
1 4.53941120263555e-14
1.54445210494638 5.44387286887618e-14
2.3853323044733 8.54141170484292e-14
3.68403149864039 1.23089272568561e-12
5.68981020276391 2.71838733932955e-11
8.7876393444041 5.90067377318538e-10
13.5720880829745 1.25931222103375e-08
20.9614400082677 2.66857254534424e-07
32.3739401434763 5.65986071718113e-06
50 0.000121286574013016
};
\addlegendentry{$\epsilon_{\mathrm{trunc}} (p=6)$}
\addplot [semithick, crimson2143940, mark=*, mark size=3, mark options={solid}]
table {%
1 1.38640155164568e-15
1.54445210494638 3.10662957723855e-15
2.3853323044733 1.62717213999417e-15
3.68403149864039 2.17841777380091e-15
5.68981020276391 6.49272835721974e-15
8.7876393444041 3.53780447038318e-13
13.5720880829745 1.81047355232389e-11
20.9614400082677 9.17048405063704e-10
32.3739401434763 4.64713461885212e-08
50 2.3814365800844e-06
};
\addlegendentry{$\epsilon_{\mathrm{rel}}$}
\addplot [semithick, crimson2143940, dashed, mark=x, mark size=3, mark options={solid}]
table {%
1 2.76096564432847e-15
1.54445210494638 2.80348831154457e-15
2.3853323044733 2.79942770969662e-15
3.68403149864039 2.78723858514762e-15
5.68981020276391 5.8548974151185e-15
8.7876393444041 3.19610069245917e-13
13.5720880829745 1.6464724050086e-11
20.9614400082677 8.36461962904022e-10
32.3739401434763 4.24336736657034e-08
50 2.17394067033844e-06
};
\addlegendentry{$\epsilon_{\mathrm{trunc}} (p=8)$}
\addplot [semithick, mediumpurple148103189, mark=*, mark size=3, mark options={solid}]
table {%
1 1.39266561259596e-15
1.54445210494638 3.10429358273395e-15
2.3853323044733 1.62795449570407e-15
3.68403149864039 2.23213742969826e-15
5.68981020276391 3.78707493263896e-15
8.7876393444041 1.60363402694713e-15
13.5720880829745 1.73924480659207e-14
20.9614400082677 2.02714280553394e-12
32.3739401434763 2.45556888567537e-10
50 3.00414077431758e-08
};
\addlegendentry{$\epsilon_{\mathrm{rel}}$}
\addplot [semithick, mediumpurple148103189, dashed, mark=x, mark size=3, mark options={solid}]
table {%
1 2.76878066830562e-15
1.54445210494638 2.79697194741969e-15
2.3853323044733 2.79321314692777e-15
3.68403149864039 2.81849823874336e-15
5.68981020276391 2.88347825086941e-15
8.7876393444041 3.06773176341422e-15
13.5720880829745 1.62600171613439e-14
20.9614400082677 1.84391287988302e-12
32.3739401434763 2.23650485499941e-10
50 2.73831996360839e-08
};
\addlegendentry{$\epsilon_{\mathrm{trunc}} (p=10)$}
\addplot [semithick, sienna1408675, mark=*, mark size=3, mark options={solid}]
table {%
1 1.37768442281206e-15
1.54445210494638 3.10471123739993e-15
2.3853323044733 1.63362140544796e-15
3.68403149864039 2.22675346686402e-15
5.68981020276391 3.78928854616096e-15
8.7876393444041 1.54963900785098e-15
13.5720880829745 2.01520274444176e-15
20.9614400082677 3.81676347560905e-15
32.3739401434763 9.24327641522514e-13
50 2.70625390244064e-10
};
\addlegendentry{$\epsilon_{\mathrm{rel}}$}
\addplot [semithick, sienna1408675, dashed, mark=x, mark size=3, mark options={solid}]
table {%
1 2.7607641989758e-15
1.54445210494638 2.80015035030824e-15
2.3853323044733 2.79491278928428e-15
3.68403149864039 2.81488553528192e-15
5.68981020276391 2.88147404198433e-15
8.7876393444041 3.02799471457312e-15
13.5720880829745 3.30498873814517e-15
20.9614400082677 4.13825302253099e-15
32.3739401434763 8.75399600398274e-13
50 2.5641054036726e-10
};
\addlegendentry{$\epsilon_{\mathrm{trunc}} (p=12)$}
\end{axis}
      \end{tikzpicture}
    \end{subfigure}
    \begin{subfigure}[t]{.45\linewidth}
      \caption{Helmholtz 3D}
      \label{fig:m2m_accuracy_compare_helmholtz_3d}
      \begin{tikzpicture}[scale=0.8]

\definecolor{crimson2143940}{RGB}{214,39,40}
\definecolor{darkgray176}{RGB}{176,176,176}
\definecolor{darkorange25512714}{RGB}{255,127,14}
\definecolor{forestgreen4416044}{RGB}{44,160,44}
\definecolor{lightgray204}{RGB}{204,204,204}
\definecolor{mediumpurple148103189}{RGB}{148,103,189}
\definecolor{sienna1408675}{RGB}{140,86,75}
\definecolor{steelblue31119180}{RGB}{31,119,180}

\begin{axis}[
legend cell align={left},
legend columns=2,
legend style={
nodes={scale=0.7, transform shape},
  fill opacity=0.8,
  draw opacity=1,
  text opacity=1,
  at={(0.03,0.97)},
  anchor=north west,
  draw=lightgray204
},
log basis x={10},
log basis y={10},
tick align=outside,
tick pos=left,
x grid style={darkgray176},
xlabel={Wave number \(\displaystyle \kappa\)},
xmajorgrids,
xmin=0.822340159426889, xmax=60.8020895329329,
xmode=log,
xtick style={color=black},
xtick={0.01,0.1,1,10,100,1000},
xticklabels={
  \(\displaystyle {10^{-2}}\),
  \(\displaystyle {10^{-1}}\),
  \(\displaystyle {10^{0}}\),
  \(\displaystyle {10^{1}}\),
  \(\displaystyle {10^{2}}\),
  \(\displaystyle {10^{3}}\)
},
y grid style={darkgray176},
ymajorgrids,
ymin=1.18604458238334e-16, ymax=0.716841385895026,
ymode=log,
ytick style={color=black},
ytick={1e-18,1e-16,1e-14,1e-12,1e-10,1e-08,1e-06,0.0001,0.01,1,100},
yticklabels={
  \(\displaystyle {10^{-18}}\),
  \(\displaystyle {10^{-16}}\),
  \(\displaystyle {10^{-14}}\),
  \(\displaystyle {10^{-12}}\),
  \(\displaystyle {10^{-10}}\),
  \(\displaystyle {10^{-8}}\),
  \(\displaystyle {10^{-6}}\),
  \(\displaystyle {10^{-4}}\),
  \(\displaystyle {10^{-2}}\),
  \(\displaystyle {10^{0}}\),
  \(\displaystyle {10^{2}}\)
}
]
\addplot [semithick, steelblue31119180, mark=*, mark size=3, mark options={solid}]
table {%
1 3.4380522803315e-07
1.54445210494638 1.15566965293473e-06
2.3853323044733 4.07456451686113e-06
3.68403149864039 1.47237738432112e-05
5.68981020276391 5.38346866307394e-05
8.7876393444041 0.000198053347039396
13.5720880829745 0.000732229470091383
20.9614400082677 0.00272521154806788
32.3739401434763 0.0102263318351044
50 0.0378869100929355
};
\addlegendentry{$\epsilon_{\mathrm{rel}}$}
\addplot [semithick, steelblue31119180, dashed, mark=x, mark size=3, mark options={solid}]
table {%
1 1.28570057599836e-06
1.54445210494638 3.13379229047918e-06
2.3853323044733 1.3212298427195e-05
3.68403149864039 5.18391670201696e-05
5.68981020276391 0.000195709201937385
8.7876393444041 0.000728268517414262
13.5720880829745 0.00269571857994946
20.9614400082677 0.00996574144844753
32.3739401434763 0.0369010288237449
50 0.13743275421405
};
\addlegendentry{$\epsilon_{\mathrm{trunc}} (p=2)$}
\addplot [semithick, darkorange25512714, mark=*, mark size=3, mark options={solid}]
table {%
1 2.33252553596848e-11
1.54445210494638 1.16754097464942e-10
2.3853323044733 1.14605943885487e-09
3.68403149864039 1.07510042687159e-08
5.68981020276391 9.70030250589815e-08
8.7876393444041 8.62329581994503e-07
13.5720880829745 7.63086543088474e-06
20.9614400082677 6.76109701602456e-05
32.3739401434763 0.000603955184471763
50 0.00553143414940767
};
\addlegendentry{$\epsilon_{\mathrm{rel}}$}
\addplot [semithick, darkorange25512714, dashed, mark=x, mark size=3, mark options={solid}]
table {%
1 3.10020914225068e-10
1.54445210494638 3.10602382479438e-10
2.3853323044733 1.55116487513623e-09
3.68403149864039 1.61795236622097e-08
5.68981020276391 1.53222411421169e-07
8.7876393444041 1.38898646866272e-06
13.5720880829745 1.23765274646188e-05
20.9614400082677 0.000109650515962197
32.3739401434763 0.000972331113289368
50 0.0086940349624898
};
\addlegendentry{$\epsilon_{\mathrm{trunc}} (p=4)$}
\addplot [semithick, forestgreen4416044, mark=*, mark size=3, mark options={solid}]
table {%
1 5.65822480727385e-15
1.54445210494638 1.41055211761974e-14
2.3853323044733 1.16196301781213e-13
3.68403149864039 2.69433252858364e-12
5.68981020276391 6.18209580267319e-11
8.7876393444041 1.34473985400011e-09
13.5720880829745 2.86700485876058e-08
20.9614400082677 6.07609158547941e-07
32.3739401434763 1.29154289042787e-05
50 0.000278401719058558
};
\addlegendentry{$\epsilon_{\mathrm{rel}}$}
\addplot [semithick, forestgreen4416044, dashed, mark=x, mark size=3, mark options={solid}]
table {%
1 1.27364495745479e-13
1.54445210494638 1.22353254618301e-13
2.3853323044733 1.65641878735661e-13
3.68403149864039 2.84359391491991e-12
5.68981020276391 6.66141111500087e-11
8.7876393444041 1.47740010156527e-09
13.5720880829745 3.17840921867494e-08
20.9614400082677 6.75529842390326e-07
32.3739401434763 1.43376714298321e-05
50 0.000306957527336658
};
\addlegendentry{$\epsilon_{\mathrm{trunc}} (p=6)$}
\addplot [semithick, crimson2143940, mark=*, mark size=3, mark options={solid}]
table {%
1 2.68976054636672e-15
1.54445210494638 8.12675730775267e-15
2.3853323044733 1.54331697995608e-14
3.68403149864039 6.18634070917897e-16
5.68981020276391 1.8549691068317e-14
8.7876393444041 1.12807913946149e-12
13.5720880829745 5.77398486672718e-11
20.9614400082677 2.93608138705162e-09
32.3739401434763 1.49066505942406e-07
50 7.65021684710032e-06
};
\addlegendentry{$\epsilon_{\mathrm{rel}}$}
\addplot [semithick, crimson2143940, dashed, mark=x, mark size=3, mark options={solid}]
table {%
1 9.7791160276189e-15
1.54445210494638 9.27317382499958e-15
2.3853323044733 9.55514281451256e-15
3.68403149864039 9.63458468252621e-15
5.68981020276391 2.0461022140659e-14
8.7876393444041 1.02250357521583e-12
13.5720880829745 5.32624781022971e-11
20.9614400082677 2.71919042084701e-09
32.3739401434763 1.38164444601609e-07
50 7.07617365180844e-06
};
\addlegendentry{$\epsilon_{\mathrm{trunc}} (p=8)$}
\addplot [semithick, mediumpurple148103189, mark=*, mark size=3, mark options={solid}]
table {%
1 2.69689696052063e-15
1.54445210494638 8.14104665429314e-15
2.3853323044733 1.54382258535413e-14
3.68403149864039 8.03720739531688e-16
5.68981020276391 1.77525908061684e-14
8.7876393444041 1.92802191685246e-14
13.5720880829745 7.99859073541169e-14
20.9614400082677 9.0077059425886e-12
32.3739401434763 1.09417237886458e-09
50 1.33977437522195e-07
};
\addlegendentry{$\epsilon_{\mathrm{rel}}$}
\addplot [semithick, mediumpurple148103189, dashed, mark=x, mark size=3, mark options={solid}]
table {%
1 9.77999443119493e-15
1.54445210494638 9.27305466431603e-15
2.3853323044733 9.55485185275988e-15
3.68403149864039 9.65699173529635e-15
5.68981020276391 9.65678060692072e-15
8.7876393444041 9.75406209785714e-15
13.5720880829745 6.47516107196712e-14
20.9614400082677 7.79209098041107e-12
32.3739401434763 9.48106611663695e-10
50 1.16124800872014e-07
};
\addlegendentry{$\epsilon_{\mathrm{trunc}} (p=10)$}
\addplot [semithick, sienna1408675, mark=*, mark size=3, mark options={solid}]
table {%
1 2.69515883230252e-15
1.54445210494638 8.14616022556567e-15
2.3853323044733 1.54143648781689e-14
3.68403149864039 8.04664620664087e-16
5.68981020276391 1.77258087477642e-14
8.7876393444041 1.96063219299805e-14
13.5720880829745 1.19176876247087e-14
20.9614400082677 1.90945345759843e-14
32.3739401434763 5.63471883525122e-12
50 1.64704171387434e-09
};
\addlegendentry{$\epsilon_{\mathrm{rel}}$}
\addplot [semithick, sienna1408675, dashed, mark=x, mark size=3, mark options={solid}]
table {%
1 9.77854166882508e-15
1.54445210494638 9.27134429495871e-15
2.3853323044733 9.5526447732847e-15
3.68403149864039 9.6564312392177e-15
5.68981020276391 9.65657657643359e-15
8.7876393444041 9.70557476539454e-15
13.5720880829745 9.75426044328528e-15
20.9614400082677 1.92834213842858e-14
32.3739401434763 4.89467589391192e-12
50 1.43359496490303e-09
};
\addlegendentry{$\epsilon_{\mathrm{trunc}} (p=12)$}
\end{axis}
      \end{tikzpicture}
    \end{subfigure}
  \caption{
        Comparison of error between direct evaluation and uncompressed
        Taylor M2M translation (dashed) and \emph{additional} error
        introduced by using M2M translation in compressed
        representation for Helmholtz problem with different wavenumbers.}
  \label{fig:m2m_accuracy_compare}
\end{figure}

\subsection{Operation count}
\begin{figure}[ht!]
  \centering
  \begin{subfigure}[t]{.45\linewidth}
    \caption{Laplace P2M 2D}
    \label{fig:op_count_laplace_p2m_2d}
    \begin{tikzpicture}[scale=0.8]

\definecolor{darkgray176}{RGB}{176,176,176}
\definecolor{darkorange25512714}{RGB}{255,127,14}
\definecolor{gray}{RGB}{128,128,128}
\definecolor{lightgray204}{RGB}{204,204,204}
\definecolor{steelblue31119180}{RGB}{31,119,180}

\begin{axis}[
legend cell align={left},
legend style={
  fill opacity=0.8,
  draw opacity=1,
  text opacity=1,
  at={(0.03,0.97)},
  anchor=north west,
  draw=lightgray204
},
log basis x={10},
log basis y={10},
tick align=outside,
tick pos=left,
x grid style={darkgray176},
xlabel={Order \(\displaystyle p\)},
xmajorgrids,
xmin=1.72620478295848, xmax=44.0272213067016,
xmode=log,
xtick style={color=black},
y grid style={darkgray176},
ylabel={FLOP Count},
ymajorgrids,
ymin=7.41850395774789, ymax=4825.85484520876,
ymode=log,
ytick style={color=black},
xtick={2,4,6,8,10,20,30,40},
xticklabels={\(\displaystyle { 2 }\),
\(\displaystyle { 4 }\),
\(\displaystyle { 6 }\),
\(\displaystyle { 8 }\),
\(\displaystyle { 10 }\),
\(\displaystyle { 20 }\),
\(\displaystyle { 30 }\),
\(\displaystyle { 40 }\)},
ytick={0.1,1,10,100,1000,10000,100000},
yticklabels={
  \(\displaystyle {10^{-1}}\),
  \(\displaystyle {10^{0}}\),
  \(\displaystyle {10^{1}}\),
  \(\displaystyle {10^{2}}\),
  \(\displaystyle {10^{3}}\),
  \(\displaystyle {10^{4}}\),
  \(\displaystyle {10^{5}}\)
}
]
\addplot [semithick, steelblue31119180, mark=*, mark size=3, mark options={solid}]
table {%
2 10
4 31
6 62
8 105
10 160
12 227
14 306
16 397
18 500
20 615
22 742
24 881
26 1032
28 1195
30 1370
32 1557
34 1756
36 1967
38 2190
};
\addlegendentry{P2M Full}
\addplot [semithick, darkorange25512714, mark=*, mark size=3, mark options={solid}]
table {%
2 11
4 42
6 91
8 160
10 249
12 358
14 487
16 636
18 805
20 994
22 1203
24 1432
26 1681
28 1950
30 2239
32 2548
34 2877
36 3226
38 3595
};
\addlegendentry{P2M Compressed}
\addplot [semithick, gray, dashed]
table {%
2 9.9584487534626
4 39.8337950138504
6 89.6260387811634
8 159.335180055402
10 248.961218836565
12 358.504155124654
14 487.963988919668
16 637.340720221607
18 806.634349030471
20 995.84487534626
22 1204.97229916898
24 1434.01662049861
26 1682.97783933518
28 1951.85595567867
30 2240.65096952909
32 2549.36288088643
34 2877.99168975069
36 3226.53739612188
38 3595
};
\addlegendentry{$p^2$}
\end{axis}
    \end{tikzpicture}
  \end{subfigure}
  \begin{subfigure}[t]{.45\linewidth}
    \caption{Laplace P2L: 2D}
    \label{fig:op_count_laplace_p2l_2d}
    \begin{tikzpicture}[scale=0.8]

\definecolor{darkgray176}{RGB}{176,176,176}
\definecolor{darkorange25512714}{RGB}{255,127,14}
\definecolor{gray}{RGB}{128,128,128}
\definecolor{lightgray204}{RGB}{204,204,204}
\definecolor{steelblue31119180}{RGB}{31,119,180}

\begin{axis}[
legend cell align={left},
legend style={
  fill opacity=0.8,
  draw opacity=1,
  text opacity=1,
  at={(0.03,0.97)},
  anchor=north west,
  draw=lightgray204
},
log basis x={10},
log basis y={10},
tick align=outside,
tick pos=left,
x grid style={darkgray176},
xlabel={Order \(\displaystyle p\)},
xmajorgrids,
xmin=1.72620478295848, xmax=44.0272213067016,
xmode=log,
xtick style={color=black},
y grid style={darkgray176},
ylabel={FLOP Count},
ymajorgrids,
ymin=15.1155887317116, ymax=9832.93094329742,
ymode=log,
ytick style={color=black},
xtick={2,4,6,8,10,20,30,40},
xticklabels={\(\displaystyle { 2 }\),
\(\displaystyle { 4 }\),
\(\displaystyle { 6 }\),
\(\displaystyle { 8 }\),
\(\displaystyle { 10 }\),
\(\displaystyle { 20 }\),
\(\displaystyle { 30 }\),
\(\displaystyle { 40 }\)},
ytick={1,10,100,1000,10000,100000},
yticklabels={
  \(\displaystyle {10^{0}}\),
  \(\displaystyle {10^{1}}\),
  \(\displaystyle {10^{2}}\),
  \(\displaystyle {10^{3}}\),
  \(\displaystyle {10^{4}}\),
  \(\displaystyle {10^{5}}\)
}
]
\addplot [semithick, steelblue31119180, mark=*, mark size=3, mark options={solid}]
table {%
2 30
4 88
6 195
8 340
10 527
12 752
14 1017
16 1324
18 1669
20 2054
22 2481
24 2946
26 3451
28 3998
30 4583
32 5208
34 5873
36 6580
38 7325
};
\addlegendentry{P2L Full}
\addplot [semithick, darkorange25512714, mark=*, mark size=3, mark options={solid}]
table {%
2 26
4 52
6 80
8 108
10 136
12 164
14 192
16 220
18 248
20 276
22 304
24 332
26 360
28 388
30 416
32 444
34 472
36 500
38 528
};
\addlegendentry{P2L Compressed}
\addplot [semithick, gray, dashed]
table {%
2 27.7894736842105
4 55.5789473684211
6 83.3684210526316
8 111.157894736842
10 138.947368421053
12 166.736842105263
14 194.526315789474
16 222.315789473684
18 250.105263157895
20 277.894736842105
22 305.684210526316
24 333.473684210526
26 361.263157894737
28 389.052631578947
30 416.842105263158
32 444.631578947368
34 472.421052631579
36 500.210526315789
38 528
};
\addlegendentry{$p$}
\addplot [semithick, red, dashed]
table {%
2 20.2908587257618
4 81.1634349030471
6 182.617728531856
8 324.653739612188
10 507.271468144044
12 730.470914127424
14 994.252077562327
16 1298.61495844875
18 1643.5595567867
20 2029.08587257618
22 2455.19390581717
24 2921.8836565097
26 3429.15512465374
28 3977.00831024931
30 4565.4432132964
32 5194.45983379501
34 5864.05817174515
36 6574.23822714681
38 7325
};
\addlegendentry{$p^2$}
\end{axis}
    \end{tikzpicture}
  \end{subfigure}\vspace{5mm}
  \begin{subfigure}[t]{.45\linewidth}
    \caption{Laplace M2M 2D}
    \label{fig:op_count_laplace_m2m_2d}
    \begin{tikzpicture}[scale=0.8]

\definecolor{darkgray176}{RGB}{176,176,176}
\definecolor{darkorange25512714}{RGB}{255,127,14}
\definecolor{gray}{RGB}{128,128,128}
\definecolor{lightgray204}{RGB}{204,204,204}
\definecolor{steelblue31119180}{RGB}{31,119,180}

\begin{axis}[
legend cell align={left},
legend style={
  fill opacity=0.8,
  draw opacity=1,
  text opacity=1,
  at={(0.03,0.97)},
  anchor=north west,
  draw=lightgray204
},
log basis x={10},
log basis y={10},
tick align=outside,
tick pos=left,
x grid style={darkgray176},
xlabel={Order \(\displaystyle p\)},
xmajorgrids,
xmin=1.72620478295848, xmax=44.0272213067016,
xmode=log,
xtick style={color=black},
y grid style={darkgray176},
ylabel={FLOP Count},
ymajorgrids,
ymin=3.80913549355457, ymax=63199.4507897931,
ymode=log,
ytick style={color=black},
xtick={2,4,6,8,10,20,30,40},
xticklabels={\(\displaystyle { 2 }\),
\(\displaystyle { 4 }\),
\(\displaystyle { 6 }\),
\(\displaystyle { 8 }\),
\(\displaystyle { 10 }\),
\(\displaystyle { 20 }\),
\(\displaystyle { 30 }\),
\(\displaystyle { 40 }\)},
ytick={0.1,1,10,100,1000,10000,100000,1000000},
yticklabels={
  \(\displaystyle {10^{-1}}\),
  \(\displaystyle {10^{0}}\),
  \(\displaystyle {10^{1}}\),
  \(\displaystyle {10^{2}}\),
  \(\displaystyle {10^{3}}\),
  \(\displaystyle {10^{4}}\),
  \(\displaystyle {10^{5}}\),
  \(\displaystyle {10^{6}}\)
}
]
\addplot [semithick, steelblue31119180, mark=*, mark size=3, mark options={solid}]
table {%
2 30
4 124
6 299
8 590
10 1029
12 1648
14 2479
16 3554
18 4905
20 6564
22 8563
24 10934
26 13709
28 16920
30 20599
32 24778
34 29489
36 34764
38 40635
};
\addlegendentry{M2M Full}
\addplot [semithick, darkorange25512714, mark=*, mark size=3, mark options={solid}]
table {%
2 31
4 111
6 222
8 371
10 556
12 777
14 1034
16 1327
18 1656
20 2021
22 2422
24 2859
26 3332
28 3841
30 4386
32 4967
34 5584
36 6237
38 6926
};
\addlegendentry{M2M Compressed}
\addplot [semithick, gray, dashed]
table {%
2 19.185595567867
4 76.7423822714681
6 172.670360110803
8 306.969529085873
10 479.639889196676
12 690.681440443213
14 940.094182825485
16 1227.87811634349
18 1554.03324099723
20 1918.5595567867
22 2321.45706371191
24 2762.72576177285
26 3242.36565096953
28 3760.37673130194
30 4316.75900277008
32 4911.51246537396
34 5544.63711911357
36 6216.13296398892
38 6926
};
\addlegendentry{$p^2$}
\addplot [semithick, red, dashed]
table {%
2 5.92433299314769
4 47.3946639451815
6 159.956990814988
8 379.157311561452
10 740.541624143461
12 1279.6559265199
14 2032.04621664966
16 3033.25849249162
18 4318.83875200467
20 5924.33299314769
22 7885.28721387957
24 10237.2474121592
26 13015.7595859455
28 16256.3697331973
30 19994.6238518734
32 24266.0679399329
34 29106.2479953346
36 34550.7100160373
38 40635
};
\addlegendentry{$p^3$}
\end{axis}
    \end{tikzpicture}
  \end{subfigure}
  \begin{subfigure}[t]{.45\linewidth}
    \caption{Laplace M2L 2D}
    \label{fig:op_count_laplace_m2l_2d}
    \begin{tikzpicture}[scale=0.8]

\definecolor{darkgray176}{RGB}{176,176,176}
\definecolor{darkorange25512714}{RGB}{255,127,14}
\definecolor{gray}{RGB}{128,128,128}
\definecolor{lightgray204}{RGB}{204,204,204}
\definecolor{steelblue31119180}{RGB}{31,119,180}

\begin{axis}[
legend cell align={left},
legend style={
  fill opacity=0.8,
  draw opacity=1,
  text opacity=1,
  at={(0.03,0.97)},
  anchor=north west,
  draw=lightgray204
},
log basis x={10},
log basis y={10},
tick align=outside,
tick pos=left,
x grid style={darkgray176},
xlabel={Order \(\displaystyle p\)},
xmajorgrids,
xmin=1.72620478295848, xmax=44.0272213067016,
xmode=log,
xtick style={color=black},
y grid style={darkgray176},
ylabel={FLOP Count},
ymajorgrids,
ymin=2.72166266634766, ymax=93793.5496591982,
ymode=log,
ytick style={color=black},
xtick={2,4,6,8,10,20,30,40},
xticklabels={\(\displaystyle { 2 }\),
\(\displaystyle { 4 }\),
\(\displaystyle { 6 }\),
\(\displaystyle { 8 }\),
\(\displaystyle { 10 }\),
\(\displaystyle { 20 }\),
\(\displaystyle { 30 }\),
\(\displaystyle { 40 }\)},
ytick={0.1,1,10,100,1000,10000,100000,1000000},
yticklabels={
  \(\displaystyle {10^{-1}}\),
  \(\displaystyle {10^{0}}\),
  \(\displaystyle {10^{1}}\),
  \(\displaystyle {10^{2}}\),
  \(\displaystyle {10^{3}}\),
  \(\displaystyle {10^{4}}\),
  \(\displaystyle {10^{5}}\),
  \(\displaystyle {10^{6}}\)
}
]
\addplot [semithick, steelblue31119180, mark=*, mark size=3, mark options={solid}]
table {%
2 24.5185185185185
4 87
6 353.481481481481
8 772.851851851852
10 719.333333333333
12 1016.92592592593
14 3733.18518518519
16 2411.66666666667
18 7694.14814814815
20 10441.5185185185
22 3576
24 5304.85185185185
26 22433.8518518519
28 11024.3333333333
30 34122.8148148148
32 11827.6666666667
34 18968.6666666667
36 58335.1481481481
38 16590.6666666667
};
\addlegendentry{M2L Full}
\addplot [semithick, darkorange25512714, mark=*, mark size=3, mark options={solid}]
table {%
2 13.8888888888889
4 27
6 59.2222222222222
8 92.5555555555556
10 81.6666666666667
12 97.2222222222222
14 235.222222222222
16 157.666666666667
18 365.888888888889
20 441.888888888889
22 185
24 234.111111111111
26 712.555555555556
28 373.666666666667
30 928.555555555556
32 368.333333333333
34 513.666666666667
36 1305.88888888889
38 436.333333333333
};
\addlegendentry{M2L Compressed}
\addplot [semithick, gray, dashed]
table {%
2 4.37599647134467
4 17.5039858853787
6 33.9353603451422
8 52.5119576561361
10 72.6837281064391
12 94.1266995183525
14 116.626801928566
16 140.03188708303
18 164.228193828751
20 189.127420926325
22 214.659027300956
24 240.765356692841
26 267.398399064599
28 294.517554455957
30 322.088038974859
32 350.079717707574
34 378.466230503947
36 407.224324141707
38 436.333333333333
};
\addlegendentry{$p\log(p)$}
\addplot [semithick, red, dashed]
table {%
2 8.75727137516654
4 70.0581710013323
6 203.734963020999
8 420.349026007994
10 727.275645392965
12 1130.20162158999
14 1633.75997342769
16 2241.86147204263
18 2957.89035298949
20 3784.82971908852
22 4725.346796472
24 5781.85356438394
26 6956.55142684902
28 8251.4650832434
30 9668.46882464919
32 11209.3073602132
34 12875.6125937406
36 14668.9173375119
38 16590.6666666667
};
\addlegendentry{$p^2\log(p)$}
\end{axis}
    \end{tikzpicture}
  \end{subfigure}
  \caption{Operation counts for multipole expansion (P2M), local expansion (P2L),
    multipole-to-local translation (M2L) and multipole-to-multipole translation (M2M)
    of Laplace kernel for two dimensions.}
  \label{fig:op_count}
\end{figure}

To confirm the time complexities in Table~\ref{tab:compressed}, floating point operation
counts were collected from the source code for the case of a single source and a single target.
For multipole-to-local translation, the FFT used in the accelerated evaluation of \eqref{eq:convolution_m2l} is
by far the dominant cost. To put this cost into perspective compared
to the other translation operators, while accounting for the fact that
the FFT cost is typically amortized across many different
translations, we divide the FFT cost by the maximal size of List 2
(27 in 2D and 189 in 3D, cf.~\citep{carrier1988fast}).
The cost of the FFT of the derivatives (cf. \eqref{eq:convolution_m2l}) and the
cost of the evaluation of derivatives for the multipole to local
translation was not included, as these can be precomputed once per
tree and reused.

In our implementation, we have used~\ref{eq:deriv_laplace_2d},
 \ref{eq:deriv_laplace_3d} and~\ref{eq:deriv_biharmonic_2d}
for the Laplace equation and the biharmonic equation to obtain $w(p) = \bigO{1}$.
For the Helmholtz equation, we have used the method of \citet{tausch2003fast} to obtain $w(p) = \bigO{p}$.
For PDEs with radially symmetric Green's functions, we have implemented the procedure of
\citet{tausch2003fast} together with symbolic differentiation for the derivatives with
respect to the (scalar) derivatives using the SymPy computer algebra system
\citep{meurer2017sympy}.
For other PDEs, we use symbolic differentiation to obtain the derivatives.

Figure~\ref{fig:op_count} and Figure~\ref{fig:op_count2} show the floating point operation
counts for the uncompressed representation and compressed representation as well as the expected
time complexities for the two-dimensional and three-dimensional Laplace kernel.
We observe that local expansion (``P2L'') and multipole evaluation (``M2P'') have $\bigO{p^{d-1}}$ operations
for the Laplace equation, as the derivative calculation has amortized constant cost.
Multipole expansion, multipole-to-multipole translation,
local-to-local translation have $\bigO{p^d}$ operations as expected. Multipole-to-local translation
has cost $\bigO{p^{d-1} \log(p)}$ as expected. Costs of multipole-to-local translation
across different values of $p$ fluctuate due to the use of the Cooley-Tukey algorithm
for the forward and inverse FFT,
which involves the divisibility of various vector sizes, cf.
Section~\ref{sec:m2l}.
We have not shown operation count graphs for multipole evaluation,
local evaluation and local-to-local translation as they are very
similar to those for local expansion, multipole expansion and
multipole-to-multipole expansion graphs, respectively.

\begin{figure}[ht!]
  \centering
  \begin{subfigure}[t]{.45\linewidth}
    \caption{Laplace P2M 3D}
    \label{fig:op_count_laplace_p2m_3d}
    \begin{tikzpicture}[scale=0.8]

\definecolor{darkgray176}{RGB}{176,176,176}
\definecolor{darkorange25512714}{RGB}{255,127,14}
\definecolor{gray}{RGB}{128,128,128}
\definecolor{lightgray204}{RGB}{204,204,204}
\definecolor{steelblue31119180}{RGB}{31,119,180}

\begin{axis}[
legend cell align={left},
legend style={
  fill opacity=0.8,
  draw opacity=1,
  text opacity=1,
  at={(0.03,0.97)},
  anchor=north west,
  draw=lightgray204
},
log basis x={10},
log basis y={10},
tick align=outside,
tick pos=left,
x grid style={darkgray176},
xlabel={Order \(\displaystyle p\)},
xmajorgrids,
xmin=1.72620478295848, xmax=44.0272213067016,
xmode=log,
xtick style={color=black},
y grid style={darkgray176},
ylabel={FLOP Count},
ymajorgrids,
ymin=4.44600682475205, ymax=73766.1314509437,
ymode=log,
ytick style={color=black},
xtick={2,4,6,8,10,20,30,40},
xticklabels={\(\displaystyle { 2 }\),
\(\displaystyle { 4 }\),
\(\displaystyle { 6 }\),
\(\displaystyle { 8 }\),
\(\displaystyle { 10 }\),
\(\displaystyle { 20 }\),
\(\displaystyle { 30 }\),
\(\displaystyle { 40 }\)},
ytick={0.1,1,10,100,1000,10000,100000,1000000},
yticklabels={
  \(\displaystyle {10^{-1}}\),
  \(\displaystyle {10^{0}}\),
  \(\displaystyle {10^{1}}\),
  \(\displaystyle {10^{2}}\),
  \(\displaystyle {10^{3}}\),
  \(\displaystyle {10^{4}}\),
  \(\displaystyle {10^{5}}\),
  \(\displaystyle {10^{6}}\)
}
]
\addplot [semithick, steelblue31119180, mark=*, mark size=3, mark options={solid}]
table {%
2 16
4 60
6 139
8 276
10 487
12 788
14 1196
16 1715
18 2366
20 3171
22 4144
24 5297
26 6644
28 8207
30 10004
32 12049
34 14360
36 16953
38 19840
};
\addlegendentry{P2M Full}
\addplot [semithick, darkorange25512714, mark=*, mark size=3, mark options={solid}]
table {%
2 18
4 89
6 247
8 535
10 993
12 1657
14 2573
16 3773
18 5299
20 7193
22 9499
24 12251
26 15493
28 19259
30 23595
32 28539
34 34133
36 40417
38 47429
};
\addlegendentry{P2M Compressed}
\addplot [semithick, gray, dashed]
table {%
2 6.91485639306021
4 55.3188511444817
6 186.701122612626
8 442.550809155854
10 864.357049132527
12 1493.60898090101
14 2371.79574281965
16 3540.40647324683
18 5040.9303105409
20 6914.85639306021
22 9203.67385916314
24 11948.871847208
26 15191.9394955533
28 18974.3659425572
30 23337.6403265782
32 28323.2517859746
34 33972.6894591048
36 40327.4424843272
38 47429
};
\addlegendentry{$p^3$}
\end{axis}
    \end{tikzpicture}
  \end{subfigure}
  \begin{subfigure}[t]{.45\linewidth}
    \caption{Laplace P2L 3D}
    \label{fig:op_count_laplace_p2l_3d}
    \begin{tikzpicture}[scale=0.8]

\definecolor{darkgray176}{RGB}{176,176,176}
\definecolor{darkorange25512714}{RGB}{255,127,14}
\definecolor{gray}{RGB}{128,128,128}
\definecolor{lightgray204}{RGB}{204,204,204}
\definecolor{steelblue31119180}{RGB}{31,119,180}

\begin{axis}[
legend cell align={left},
legend style={
  fill opacity=0.8,
  draw opacity=1,
  text opacity=1,
  at={(0.03,0.97)},
  anchor=north west,
  draw=lightgray204
},
log basis x={10},
log basis y={10},
tick align=outside,
tick pos=left,
x grid style={darkgray176},
xlabel={Order \(\displaystyle p\)},
xmajorgrids,
xmin=1.72620478295848, xmax=44.0272213067016,
xmode=log,
xtick style={color=black},
y grid style={darkgray176},
ylabel={FLOP Count},
ymajorgrids,
ymin=13.6089298276799, ymax=225793.199638484,
ymode=log,
ytick style={color=black},
xtick={2,4,6,8,10,20,30,40},
xticklabels={\(\displaystyle { 2 }\),
\(\displaystyle { 4 }\),
\(\displaystyle { 6 }\),
\(\displaystyle { 8 }\),
\(\displaystyle { 10 }\),
\(\displaystyle { 20 }\),
\(\displaystyle { 30 }\),
\(\displaystyle { 40 }\)},
ytick={1,10,100,1000,10000,100000,1000000,10000000},
yticklabels={
  \(\displaystyle {10^{0}}\),
  \(\displaystyle {10^{1}}\),
  \(\displaystyle {10^{2}}\),
  \(\displaystyle {10^{3}}\),
  \(\displaystyle {10^{4}}\),
  \(\displaystyle {10^{5}}\),
  \(\displaystyle {10^{6}}\),
  \(\displaystyle {10^{7}}\)
}
]
\addplot [semithick, steelblue31119180, mark=*, mark size=3, mark options={solid}]
table {%
2 50
4 254
6 739
8 1636
10 3055
12 5123
14 7955
16 11670
18 16389
20 22229
22 29314
24 37765
26 47701
28 59238
30 72501
32 87606
34 104673
36 123822
38 145177
};
\addlegendentry{P2L Full}
\addplot [semithick, darkorange25512714, mark=*, mark size=3, mark options={solid}]
table {%
2 46
4 179
6 404
8 723
10 1134
12 1638
14 2234
16 2921
18 3700
20 4571
22 5534
24 6589
26 7737
28 8976
30 10307
32 11730
34 13245
36 14852
38 16551
};
\addlegendentry{P2L Compressed}
\addplot [semithick, gray, dashed]
table {%
2 45.8476454293629
4 183.390581717452
6 412.628808864266
8 733.562326869806
10 1146.19113573407
12 1650.51523545706
14 2246.53462603878
16 2934.24930747922
18 3713.65927977839
20 4584.76454293629
22 5547.56509695291
24 6602.06094182825
26 7748.25207756233
28 8986.13850415512
30 10315.7202216066
32 11736.9972299169
34 13249.9695290859
36 14854.6371191136
38 16551
};
\addlegendentry{$p^2$}
\addplot [semithick, red, dashed]
table {%
2 21.1659133984546
4 169.327307187637
6 571.479661758274
8 1354.61845750109
10 2645.73917480682
12 4571.83729406619
14 7259.90829566992
16 10836.9476600087
18 15429.9508674734
20 21165.9133984546
22 28171.8307333431
24 36574.6983525295
26 46501.5117364047
28 58079.2663653594
30 71434.9577197842
32 86695.58128007
34 103988.132526607
36 123439.606939787
38 145177
};
\addlegendentry{$p^3$}
\end{axis}
    \end{tikzpicture}
  \end{subfigure}\vspace{5mm}
  \begin{subfigure}[t]{.45\linewidth}
    \caption{Laplace M2M 3D}
    \label{fig:op_count_laplace_m2m_3d}
    \begin{tikzpicture}[scale=0.8]

\definecolor{darkgray176}{RGB}{176,176,176}
\definecolor{darkorange25512714}{RGB}{255,127,14}
\definecolor{gray}{RGB}{128,128,128}
\definecolor{lightgray204}{RGB}{204,204,204}
\definecolor{steelblue31119180}{RGB}{31,119,180}

\begin{axis}[
legend cell align={left},
legend style={
  fill opacity=0.8,
  draw opacity=1,
  text opacity=1,
  at={(0.03,0.97)},
  anchor=north west,
  draw=lightgray204
},
log basis x={10},
log basis y={10},
tick align=outside,
tick pos=left,
x grid style={darkgray176},
xlabel={Order \(\displaystyle p\)},
xmajorgrids,
xmin=1.72620478295848, xmax=44.0272213067016,
xmode=log,
xtick style={color=black},
y grid style={darkgray176},
ylabel={FLOP Count},
ymajorgrids,
ymin=2.63501915947453, ymax=1115063.55521759,
ymode=log,
ytick style={color=black},
xtick={2,4,6,8,10,20,30,40},
xticklabels={\(\displaystyle { 2 }\),
\(\displaystyle { 4 }\),
\(\displaystyle { 6 }\),
\(\displaystyle { 8 }\),
\(\displaystyle { 10 }\),
\(\displaystyle { 20 }\),
\(\displaystyle { 30 }\),
\(\displaystyle { 40 }\)},
ytick={0.1,1,10,100,1000,10000,100000,1000000,10000000,100000000},
yticklabels={
  \(\displaystyle {10^{-1}}\),
  \(\displaystyle {10^{0}}\),
  \(\displaystyle {10^{1}}\),
  \(\displaystyle {10^{2}}\),
  \(\displaystyle {10^{3}}\),
  \(\displaystyle {10^{4}}\),
  \(\displaystyle {10^{5}}\),
  \(\displaystyle {10^{6}}\),
  \(\displaystyle {10^{7}}\),
  \(\displaystyle {10^{8}}\)
}
]
\addplot [semithick, steelblue31119180, mark=*, mark size=3, mark options={solid}]
table {%
2 50
4 287
6 910
8 2243
10 4702
12 8799
14 15142
16 24435
18 37478
20 55167
22 78494
24 108547
26 146510
28 193663
30 251382
32 321139
34 404502
36 503135
38 618798
};
\addlegendentry{M2M Full}
\addplot [semithick, darkorange25512714, mark=*, mark size=3, mark options={solid}]
table {%
2 52
4 271
6 743
8 1583
10 2903
12 4815
14 7431
16 10863
18 15223
20 20623
22 27175
24 34991
26 44183
28 54863
30 67143
32 81135
34 96951
36 114703
38 134503
};
\addlegendentry{M2M Compressed}
\addplot [semithick, gray, dashed]
table {%
2 19.6097098702435
4 156.877678961948
6 529.462166496574
8 1255.02143169558
10 2451.21373378043
12 4235.69733197259
14 6726.13048549351
16 10040.1714535647
18 14295.4784954075
20 19609.7098702435
22 26100.5238372941
24 33885.5786557807
26 43082.5325849249
28 53809.0438839481
30 66182.7708120717
32 80321.3716285173
34 96342.5045925062
36 114363.82796326
38 134503
};
\addlegendentry{$p^3$}
\addplot [semithick, red, dashed]
table {%
2 4.7482600655305
4 75.972161048488
6 384.60906530797
8 1215.55457677581
10 2967.66254095656
12 6153.74504492752
14 11400.5724173387
16 19448.8732284129
18 31153.3342899456
20 47482.600655305
22 69519.275619432
24 98459.9207188404
26 135615.055731617
28 182409.15867742
30 240380.665817481
32 311181.971654607
34 396579.428933173
36 498453.34863913
38 618798
};
\addlegendentry{$p^4$}
\end{axis}
    \end{tikzpicture}
  \end{subfigure}
  \begin{subfigure}[t]{.45\linewidth}
    \caption{Laplace M2L 3D}
    \label{fig:op_count_laplace_m2l_3d}
    \begin{tikzpicture}[scale=0.8]

\definecolor{darkgray176}{RGB}{176,176,176}
\definecolor{darkorange25512714}{RGB}{255,127,14}
\definecolor{gray}{RGB}{128,128,128}
\definecolor{lightgray204}{RGB}{204,204,204}
\definecolor{steelblue31119180}{RGB}{31,119,180}

\begin{axis}[
legend cell align={left},
legend style={
  fill opacity=0.8,
  draw opacity=1,
  text opacity=1,
  at={(0.03,0.97)},
  anchor=north west,
  draw=lightgray204
},
log basis x={10},
log basis y={10},
tick align=outside,
tick pos=left,
x grid style={darkgray176},
xlabel={Order \(\displaystyle p\)},
xmajorgrids,
xmin=1.72620478295848, xmax=44.0272213067016,
xmode=log,
xtick style={color=black},
y grid style={darkgray176},
ylabel={FLOP Count},
ymajorgrids,
ymin=2.54421223513109, ymax=1674280.78451293,
ymode=log,
ytick style={color=black},
xtick={2,4,6,8,10,20,30,40},
xticklabels={\(\displaystyle { 2 }\),
\(\displaystyle { 4 }\),
\(\displaystyle { 6 }\),
\(\displaystyle { 8 }\),
\(\displaystyle { 10 }\),
\(\displaystyle { 20 }\),
\(\displaystyle { 30 }\),
\(\displaystyle { 40 }\)},
ytick={0.1,1,10,100,1000,10000,100000,1000000,10000000,100000000},
yticklabels={
  \(\displaystyle {10^{-1}}\),
  \(\displaystyle {10^{0}}\),
  \(\displaystyle {10^{1}}\),
  \(\displaystyle {10^{2}}\),
  \(\displaystyle {10^{3}}\),
  \(\displaystyle {10^{4}}\),
  \(\displaystyle {10^{5}}\),
  \(\displaystyle {10^{6}}\),
  \(\displaystyle {10^{7}}\),
  \(\displaystyle {10^{8}}\)
}
]
\addplot [semithick, steelblue31119180, mark=*, mark size=3, mark options={solid}]
table {%
2 29.8412698412698
4 173.857142857143
6 990.698412698413
8 2816.46031746032
10 3226
12 5415.31746031746
14 23133.3650793651
16 16941
18 60827.1746031746
20 91477.6984126984
22 34121.4285714286
24 55213.4444444444
26 254145.46031746
28 133836.142857143
30 445006.507936508
32 163473.571428571
34 278920.285714286
36 910670.46031746
38 271536
};
\addlegendentry{M2L Full}
\addplot [semithick, darkorange25512714, mark=*, mark size=3, mark options={solid}]
table {%
2 19.3174603174603
4 63.5714285714286
6 204.587301587302
8 420.460317460317
10 443
12 625.349206349206
14 1853.60317460317
16 1360.71428571429
18 3707.44444444444
20 4977.03174603175
22 2136.14285714286
24 2987.88888888889
26 10449.0317460317
28 5688.71428571429
30 15726.873015873
32 6319.95238095238
34 9537.85714285714
36 26576.0158730159
38 8861.66666666667
};
\addlegendentry{M2L Compressed}
\addplot [semithick, gray, dashed]
table {%
2 4.6775709135418
4 37.4205673083344
6 108.822108653726
8 224.523403850007
10 388.463855838061
12 603.680987502409
14 872.649459406396
16 1197.4581538667
18 1579.91471177018
20 2021.61251470643
22 2523.97622326224
24 3088.29416155966
26 3715.74217798724
28 4407.40173667978
30 5164.2739633388
32 5987.29076933351
34 6877.32381267767
36 7835.19182306827
38 8861.66666666667
};
\addlegendentry{$p^2\log(p)$}
\addplot [semithick, red, dashed]
table {%
2 7.54360232172567
4 120.697637147611
6 526.498086294375
8 1448.37164577133
10 3132.41306114975
12 5841.40279184774
14 9851.36180037591
16 15449.2975548942
18 22931.6105673582
20 32602.9068109237
22 44775.0778655777
24 59766.5671467239
26 77901.7707920688
28 99510.5391738225
30 124927.755602016
32 154492.975548942
34 188550.114209163
36 227447.17327917
38 271536
};
\addlegendentry{$p^3\log(p)$}
\end{axis}
    \end{tikzpicture}
  \end{subfigure}
  \caption{Operation counts for multipole expansion (P2M), local expansion (P2L),
    multipole-to-local translation (M2L) and multipole-to-multipole translation (M2M)
    of Laplace kernel for three dimensions.}
  \label{fig:op_count2}
\end{figure}

\subsection{Application: A Boundary Integral Equation Solver for the Biharmonic Equation}

To test our expansion algorithms in the context of an application, we consider a boundary integral equation solver
involving the biharmonic equation in two dimensions. We solve the
following interior boundary value problem:
\begin{align*}
  \Delta^2 u &= 0 \text{  on  } D, \\
  u &= g_1 \text{  on  } \partial D \\
  \frac{\partial u}{\partial \bnu} &= g_2 \text{  on  } \partial D, \\
\end{align*}
where $\frac{\partial u}{\partial \bnu}$ is the normal derivative, and
$D$ is the interior of the ellipse defined by the curve $\gamma(t)$ where
\[
  \gamma(t) = \left[\cos(2\pi t), \frac{\sin(2\pi t)}{3}\right]^T,
\] and $t$ is in the range $[0, 1)$.

The boundary conditions $g_1$ and $g_2$ were generated from a set of point `charges'
with strengths $\sigma_i$ drawn from a standard normal distribution:
\begin{equation}
  g_1(\b x) = \sum_{i=0}^{9}
    \left(\b r_1^2 + \b r_2^2\right) \log{\left(\b r_1^2 + \b r_2^2\right)} \sigma_i
    \qquad{\b x \text{ on } D}
  \label{eq:g_1}
\end{equation} and
\[
  g_2(\b x) = \sum_{i=0}^{9}
    2\left(\log\left(\b r_1^2 + \b r_2^2\right) + 1\right)
    \left(\b r_2 \b \eta_2 + \b r_1 \b \eta_1 \right) \sigma_i \qquad{\b x \text{ on } \partial D}
\] where $(\b \eta_1, \b \eta_2)$ is the normal direction to the boundary $\partial D$ at the
the point $(\b x_1, \b x_2)$ and
\begin{align*}
  \b r_1 &= \b x_1 - 2 \sin{\frac{2 \pi i}{10}} \\
  \b r_2 &= \b x_2 - 2 \cos{\frac{2 \pi i}{10}}.
\end{align*}
Using \eqref{eq:g_1} the reference values $u_{\text{ref}}(\b x)$ for points
inside the boundary are also calculated.

The system of integral equations used for this boundary value problem is the one described in
\citep{farkas1990biharmonic}. For layer potential evaluation, the method of quadrature by
expansion \citep{klockner2013quadrature} was used with QBX order 5.
The boundary was discretized into $100$ elements of equal length in $t$.
According to the criteria of~\citet{wala2019fast}, no refinement was required for
accurate QBX evaluation, and thus no further refinement was performed. The
generalized minimal residual method (GMRES) with a tolerance of $10^{-9}$ was used to
solve the system of integral equations.
We use the reference values to calculate the relative error for each target
and calculate the 2-norm to compute the error measure
\[
  \epsilon = \sqrt{\frac{\sum_{\b x \in T}\left|u_{\text{approx}}(\b x) - u_{\text{ref}}(\b x)\right|^2}
    {\sum_{\b x \in T} |u_{\text{ref}}(\b x)|^2}}
\] where $u_{\text{approx}}(\b x)$ is the potential at target $\b x$ computed via
the solution representation using the numerically-solved density
function.
Errors are of very similar magnitude between the FMM based on Taylor
series expansions, the FMM using compressed Taylor series, and the
FMM using compressed Taylor series and FFT-based multipole-to-local
translations, cf. Table~\ref{tab:biharmonic_ie_error}. For high-order
expansions, the FFT-based approach incurs a small amount of additional
error, in line with the discussion in Section~\ref{sec:numerical_stability}.
\begin{table}
  \centering
    \begin{tabular}{lcccc} \toprule
    Order  & Taylor Series & Compressed Taylor & Compressed Taylor with FFT \\ \midrule
    6 & 7.39569e-04 & 7.39569e-04 & 7.39569e-04 \\
    8 & 1.88837e-04 & 1.88837e-04 & 1.88837e-04 \\
    10 & 3.76156e-05 & 3.76156e-05 & 3.76156e-05 \\
    12 & 2.52269e-06 & 2.52269e-06 & 2.52269e-06 \\
    14 & 8.51900e-08 & 8.51899e-08 & 8.51950e-08 \\
    16 & 1.18994e-08 & 1.18995e-08 & 1.18681e-08 \\
    18 & 7.95475e-09 & 7.95485e-09 & 7.77618e-09 \\  \bottomrule
  \end{tabular}

  \caption{2-norm errors for the biharmonic boundary value problem}
  \label{tab:biharmonic_ie_error}
\end{table}

\section{Conclusion}

In this paper, we have described a new algorithm for the automatic synthesis of
low-complexity translation operators for the FMM, based on the Taylor series of
the potential function for potentials satisfying a PDE. We have shown that a time
complexity of $\bigO{p^d}$ can be achieved for the translation operators
for a $d$-dimensional expansion of order $p$. The new translation operators
retain the same asymptotic error estimates as the classical ones of
\citet{greengard1988efficient}.

The FMM using compressed Taylor series achieves similar time complexities
compared to PDE-specialized fast algorithms. This includes FMMs using
spherical harmonic expansions, such as the kernel- and PDE-specific methods
of \citet{greengard1988efficient} and \citet{greengard2002new}.
In contrast to those, our approach provides a single method that is applicable
to a broad class of kernels and PDEs in two and three dimensions.

While we feel that the present contribution addresses the case of
non-oscillatory scalar kernels in a satisfactory manner, extensions
to the oscillatory setting as well as the case of systems of PDEs such
as Stokes, elasticity, or Maxwell's are of immediate interest.
In addition, a high-performance implementation of the approach could
be of immediate interest across a broad range of applications.

\section*{Acknowledgments}

The authors' research was supported by the National Science Foundation
under grants DMS-1654756 and SHF-1911019 as well as the Department of
Computer Science at the University of Illinois at Urbana-Champaign.
Any opinions, findings, and conclusions, or recommendations expressed
in this article are those of the authors and do not necessarily
reflect the views of the National Science Foundation; NSF has not
approved or endorsed its content. The authors would also like to thank
Matt Wala for helpful discussions.

\bibliography{references}

\end{document}